\newtheorem{theorem}{Theorem}[section]
\newtheorem{definition}[theorem]{Definition}
\newtheorem{example}[theorem]{Example}
\newtheorem{corollary}[theorem]{Corollary}
\newtheorem{lemma}[theorem]{Lemma}
\newtheorem{remark}[theorem]{Remark}
\newtheorem*{theorem*}{Theorem}
\newtheorem*{definition*}{Definition}
\newtheorem*{lemma*}{Lemma}
\DeclareMathOperator{\irr}{Irr}
\DeclareMathOperator{\triv}{triv}
\DeclareMathOperator{\Pin}{Pin}
\DeclareMathOperator{\Det}{Det}
\DeclareMathOperator{\Dim}{Dim}
\DeclareMathOperator{\Ker}{Ker}
\DeclareMathOperator{\im}{im}
\DeclareMathOperator{\sgn}{sgn}
\DeclareMathOperator{\stab}{stab}
\DeclareMathOperator{\Spec}{Spec}
\DeclareMathOperator{\Span}{span}
\title[Dirac cohomology of the Dunkl-Opdam subalgebra]{Dirac cohomology of the Dunkl-Opdam subalgebra via inherited  Drinfeld properties}
\date{\today}
\author{Kieran Calvert}
\begin{document}
\maketitle
\setcounter{tocdepth}{1} 
\setcounter{secnumdepth}{3}
\tableofcontents

\begin{abstract} 
In this paper we define a new presentation for the Dunkl-Opdam subalgebra of the rational Cherednik algebra. This presentation uncovers  the Dunkl-Opdam subalgebra as a Drinfeld algebra. We  use this fact to define Dirac cohomology for the DO subalgebra. We also formalise generalised graded Hecke algebras and extend a Langlands classification to generalised graded Hecke algebras.\end{abstract}

\begin{section}{Introduction}

We study the Dunkl-Opdam subalgebra, $\mathbb{H}_{DO}$, of the rational Cherednik algebra associated to $G(m,1,n) = S_n \rtimes (\mathbb{Z}_m)^n$, introduced by Dunkl and Opdam \cite{DO03}. This subalgebra of the rational Cherednik algebra $\mathcal{H}_t(G(m,1,n))$  (Definition \ref{rationalcherednikdef}) algebra is independent of the parameter $t$. In this chapter we take a closer look at $\mathbb{H}_{DO}$ and notice that it is similar to both graded Hecke algebras and Drinfeld algebras. We extend several results for Hecke algebras and faithful Drinfeld algebras to include the Dunkl-Opdam subalgebra. 
We construct a new presentation of $\mathbb{H}_{DO}$:
\begin{theorem*}\label{Drinfeldpresentation} There exists a  presentation of $\mathbb{H}_{DO}$ given by elements $\{\tilde{z}_i: i=1,...,n\}$ and elements in $G$ such that:
$$s_i \tilde{z}_j s_i^{-1} = s_i(\tilde{z}_j) ,$$
$$g_i \tilde{z}_j = \tilde{z}_j g_i \hspace{1cm} \forall i,j = 1,...,n,$$
$$ [\tilde{z}_i,\tilde{z}_j] \in \mathbb{C}G.$$
\end{theorem*} 
This presentation exposes $\mathbb{H}_{DO}$ as a Drinfeld algebra. Drinfeld \cite{D86} initially defined these algebras (Definition \ref{drinfeldalgdef})  with the potential to have non faithful representations. In the literature this has been largely forgotten, perhaps because there appeared to be no natural examples of a non-faithful Drinfeld algebra. The Dunkl-Opdam subalgebra is a naturally occurring non-faithful Drinfeld algebra. Ciubotaru \cite{C16} defined Dirac cohomology for faithful Drinfeld algebras and we extend this to non-faithful Drinfeld algebras. 

Dez\'el\'ee introduced the idea of generalised graded Hecke algebra to look at the Dunkl-Opdam subalgebra. We concretely define a class of generalised graded Hecke algebra, which contains Dez\'el\'ee's examples. We extend Evans' \cite{E96} Langlands classification to generalised graded Hecke algebras. 

\begin{theorem*} Let $\mathbb{GH}$ denote a generalised graded Hecke algebra. A parabolic subalgebra is denoted by $\mathbb{GH}_P$, with $\mathbb{GH}_{P_S}$ denoting the semisimple part of $\mathbb{GH}_P$ (Definition \ref{parabolic}).

(i) Every irreducible $\mathbb{GH}$ module $V$ can be realised as a quotient of \\
$\mathbb{GH}(W\rtimes T) \otimes_{\mathbb{GH}_P} U$, where $U = \hat{U} \otimes \mathbb{C}_\nu$ is such that $\hat{U}$ is an irreducible tempered $\mathbb{GH}_{P_s}$ module and $\mathbb{C}_\nu$ is a character of $S(\mathfrak{a})$ defined by $\nu \in \mathfrak{a}^{*^+}$. 

(ii) If U is as in (i) then $\mathbb{H}(W\rtimes T) \otimes_{\mathbb{GH}_P} U$ has a unique irreducible quotient to be denoted $J(P,U)$.

(iii) If $J(P,\hat{U}\otimes \mathbb{C}_\nu) \cong J(P',\hat{U}'\otimes\mathbb{C}_{\nu'})$ then $P = P'$, $\hat{U} \cong \hat{U}'$ as $\mathbb{GH}_{P_s}$ modules and $\nu = \nu'$.

\end{theorem*}
The Dunkl-Opdam subalgebra has a commutative subalgebra $  \mathbb{C}T \cong (\mathbb{Z}_m)^n$. We decompose representations into weight spaces (Definition \ref{weightspacedef}), which then defines weights of a representation. The weights of a $\mathbb{H}_{DO}$ representation come in orbits (Lemma \ref{lang2}). We use these weights to highlight that irreducible representations of $\mathbb{H}_{DO}$ are pullbacks of $\mathbb{H}(S_{a_i})$ representations via specific quotients (Lemma  \ref{quotientspullback}).    Define the set $A = \{ \underline{a}\in \mathbb{N}^m:\sum a_i=n\}$, there is a  Morita equivalence 
$$\mathbb{H}_{DO}(G(m,1,n)) \xrightarrow{\text{Morita}}\bigoplus_{\underline{a}\in A} \mathbb{H}(S_{a_0})\otimes ...\otimes\mathbb{H}(S_{a_{m-1}}) \text{\hspace{1cm}(Theorem \ref{moritaequiv}).}$$

We use this Morita equivalence to describe the Dirac cohomology of a $\mathbb{H}_{DO}$ module $X$ in terms of Dirac cohomology of the associated $\mathbb{H}(S_{a_i})$ modules. 
Let $F$ and $F^{-1}$ be functors displaying this equivalence.
\begin{theorem*} Given an irreducible representation $V$ with $\mathbb{C}[T]$ weight space $F(V) =V_{\mu_{\underline{a}}}$, where $\underline{a}=(a_0,...,a_{m-1})$. The space $F(V)$ is as a $\mathbb{H}_{S_{a_0}} \otimes...\otimes \mathbb{H}_{S_{a_{m-1}}}$ module $F(V) \cong X_{a_0} \otimes...\otimes X_{a_{m-1}}$. The Dirac cohomology of $V$ is 
$$\bigoplus_{c\in S_P/S_n} c \left (  H_D(X_{a_0}) \otimes ...\otimes H_D(X_{a_{m-1}})\right ),$$
where $H_D(X)$ is the type A Dirac cohomology of the $\mathbb{H}_{S_k}$-module $X$. 
Let $H_D(\bullet)$ denote the functor taking the relevant module to  its Dirac cohomology. We have the following commutative diagram:

\begin{centering}

\begin{tikzcd}
\mathbb{H}(G(m,1,n))\text{-mod}\arrow[r, "F"] \arrow[d, "H_D(\bullet)"] & \bigoplus_{\underline{a}\in A}\mathbb{H}_{S_{a_0}} \otimes...\otimes \mathbb{H}_{S_{a_{m-1}}} \text{-mod}\arrow[d, "H_D(\bullet)" ] \\\mathbb{C}\widetilde{G(m,1,n)}\text{-mod} 
& \bigoplus_{\underline{a}\in A} \mathbb{C}\widetilde{S_{a_0}} \otimes ...\otimes \mathbb{C}\widetilde{S_{a_{m-1}}}\text{-mod}\arrow[l, "\widetilde{F^{-1}}"]
\end{tikzcd}
\end{centering}
\end{theorem*}

In Section \ref{drinfeldalgebras} we study Drinfeld algebras, we focus on the fact that these algebras can be defined with a non-faithful representation. We extend Dirac cohomology defined in \cite{C16} to the non-faithful case. In Section \ref{generalisedgradedheckealg}, we introduce the class of generalised graded Hecke algebras and we extend Evens' \cite{E96} Langlands classification to this class. In Section \ref{DOsubalg} we introduce the Dunkl-Opdam subalgebra. We highlight that it is a generalised graded Hecke algebra and by introducing a new presentation show that it is also a non-faithful Drinfeld algebra. Section \ref{reptheory} defines a Morita equivalence between the DO subalgebra and direct sums of graded Hecke algebras associated to  parabolic symmetric groups. In Section \ref{diraccohofDO} we combine results on Dirac cohomology (Section \ref{drinfeldalgebras}) and the Morita equivalence (Section \ref{reptheory}) to describe Dirac cohomology of a $\mathbb{H}_{DO}$ module with Dirac cohomology of its associated module under the Morita equivalence. This highlights that the Morita equivalence behaves well with respect to Dirac cohomology. 

\end{section}

\begin{section}{Drinfeld algebras}\label{drinfeldalgebras}
 In this section we will define Drinfeld algebras as introduced by Drinfeld. Ciubotaru \cite{C16} defined Dirac cohomology for faithful Drinfeld algebras and we will extend Dirac cohomology to non-faithful Drinfeld algebras. The results in this section follow almost verbatim from the proofs in \cite{C16} so we will not write them here.

 Given a finite group $G$, antisymmetric bilinear forms $b_g$ for $g \in G$ and a representation $(\rho, V)$ of G, then we construct an algebra
 $$ \mathbb{H} =\mathbb{C}[G]\rtimes T(V)/R.$$
 Here $R$ is the two sided ideal of $\mathbb{C}[G]\rtimes T(V)$ generated by the relations,
 $$g^{-1}vg = \rho(g)(v) \text{ for all } g\in G \text{ and } v\in V,$$
 and
 $$[u,v] = \sum_{g \in G} b_g(u,v) g \text{ for all } v,u \in V.$$
 
We define a filtration on the algebra $\mathbb{C}[g]\rtimes T(V)/R$, a vector $v$ has degree $1$ and a group element $g \in G$ has degree $0$. 
 \begin{definition}\cite{D86} \label{drinfeldalgdef} An algebra of the form $\mathbb{H} =\mathbb{C}[G]\rtimes T(V)/R$ is a Drinfeld algebra if it satisfies a PBW criterion. That is the associated graded algebra is naturally isomorphic to 
 $$\mathbb{C}[G] \rtimes S(V).$$
Here $\rtimes$ denotes the semi direct product with the natural action of $G$ on $V$. \end{definition}
 We state the conditions on the bilinear forms $b_g$ such that $\mathbb{H}$ is a Drinfeld algebra. This was originally stated in \cite{D86}, and explained for the faithful case in \cite{RS03}.
 Define $G(b) = \{g\in G : b_g \neq 0\}$.
 \begin{theorem}\cite{D86}\cite[Theorem 1.9]{RS03} The algebra $\mathbb{H}$ is a Drinfeld algebra if and only if:

\indent(1) For every $g \in G$, $b_{g^{-1}hg} (u,v) = b_h (\rho(g)(u),\rho(g)(v))$ for every $u,v \in V,$

\indent(2) For every $g \in G(b) \setminus \Ker\rho$, then $\Ker b_g = V^{\rho(g)}$ and $\Dim(V^{\rho(g)}) = \Dim V - 2,$

 \indent(3) For every $g \in G(b) \setminus \Ker\rho$ and $h \in Z_G(g),$ $\Det(h|_{V^{\rho(g)^\perp}}) = 1$, where $V^{\rho(g)^\perp }= \{ v - \rho(g)(v): v\in V\}$.

  \end{theorem}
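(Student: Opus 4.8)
The assertion is a Poincar\'e--Birkhoff--Witt theorem for the non-homogeneous quadratic algebra $\mathbb{H}=\mathbb{C}[G]\otimes T(V)/R$, and the plan is to prove it by Bergman's Diamond Lemma; equivalently, one could invoke the Braverman--Gaitsgory deformation criterion for $\mathbb{C}[G]\# S(V)$, which is Koszul as an algebra over the semisimple ring $\mathbb{C}[G]$. The faithful case is \cite[Theorem~1.9]{RS03}, so the genuinely new content will be the bookkeeping around $\Ker\rho$; I will indicate where it enters, and the remaining steps go through as in the faithful case.

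\textbf{Reduction system.} First I would fix an ordered basis of $V$ and declare the reduced words to be $g\,v_{i_1}\cdots v_{i_k}$ with $g\in G$ and $i_1\le\cdots\le i_k$. The defining relations provide three families of rewriting rules: multiplication in $G$; the rule obtained from $g^{-1}vg=\rho(g)(v)$, which moves group elements past vectors; and, for $i>j$,
\[
v_i v_j\ \rightsquigarrow\ v_j v_i+\sum_{g\in G}b_g(v_j,v_i)\,g .
\]
By the Diamond Lemma it suffices to resolve the overlap ambiguities of these length-two rules. The ambiguities $g\,h\,k$ and $v\,g\,h$ are automatically confluent (associativity in $G$, and $\rho$ being a homomorphism), which leaves two essential overlaps: $v_i v_j g$ with $i>j$, and $v_i v_j v_k$ with $i>j>k$. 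Reducing $v_i v_j g$ in both orders and matching the group-algebra-valued terms would yield precisely the equivariance relation of condition~(1). Once both overlaps are confluent, the Diamond Lemma gives that the reduced words form a basis, i.e.\ $\mathrm{gr}\,\mathbb{H}\cong\mathbb{C}[G]\# S(V)$.

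\textbf{The Jacobi overlap.} Reducing $v_i v_j v_k$ in the two orders, pushing all group elements to one side and collecting the terms of $V$-degree one, one finds that confluence is equivalent to the identity
\[
b_g(u,v)(1-\rho(g))(w)+b_g(w,u)(1-\rho(g))(v)+b_g(v,w)(1-\rho(g))(u)=0
\]
in $V$, for every $g\in G$ and all $u,v,w\in V$ (the degree-three terms match automatically). If $g\in\Ker\rho$ this is vacuous, so such $g$ impose no condition beyond~(1) --- this is exactly why (2) and (3) are only required on $G(b)\setminus\Ker\rho$. For $g\notin\Ker\rho$ I would show this cyclic identity, in the presence of~(1), is equivalent to (2) and (3). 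One direction is direct: if $\Ker b_g=V^{\rho(g)}$ has codimension $2$, then writing $w$ with respect to $V=V^{\rho(g)^\perp}\oplus V^{\rho(g)}$ makes the three terms cancel in pairs. Conversely, a linear-independence argument on the images of $1-\rho(g)$ forces $\Dim V^{\rho(g)^\perp}\le 2$; the case $\Dim V^{\rho(g)^\perp}=1$ is eliminated using the $\rho(g)$-invariance of $b_g$ (the instance $h=g$ of~(1)); and when $\Dim V^{\rho(g)^\perp}=2$ the identity itself forces $\Ker b_g=V^{\rho(g)}$, which is~(2). Condition~(3) is then a consequence of (1) and (2): by~(1) the form $b_g$ is $\rho(h)$-invariant for $h\in Z_G(g)$, and by~(2) its restriction to the $\rho(g)$-stable $2$-dimensional space $V^{\rho(g)^\perp}$ is a nonzero alternating form, hence a volume form, so $h$ acts there with determinant $1$.

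\textbf{Necessity, and the main obstacle.} For the converse, if $\mathbb{H}$ is a Drinfeld algebra then filtering it so that $\mathrm{gr}\,\mathbb{H}=\mathbb{C}[G]\# S(V)$ shows that the group elements $\{g\}$, and even the elements $\{g\,v_i\}$, are linearly independent over $\mathbb{C}$ in $\mathbb{H}$; hence any relation $\sum_g x_g\,g=0$ with each $x_g\in\mathbb{C}\oplus V$ forces $x_g=0$. Condition~(1) then follows by conjugating the commutator relation by $g$, and the Jacobi identity $[[v,u],w]+[[u,w],v]+[[w,v],u]=0$, valid already in $T(V)$ and hence in $\mathbb{H}$, expands via the relations to $\sum_g(\text{cyclic sum})\,g=0$, which forces the cyclic identity above for every $g$, hence (2) and (3) for $g\notin\Ker\rho$ by the analysis just given. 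The routine-but-delicate heart of the argument will be the $v_i v_j v_k$ overlap computation together with the linear algebra extracting the exact solvability condition~(2) from the cyclic identity when $\rho(g)\ne 1$; the only ingredient beyond the faithful treatment of \cite{RS03} is checking that elements of $\Ker\rho$ contribute no constraint, which the vanishing of $1-\rho(g)$ makes immediate.
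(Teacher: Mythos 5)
Your sketch is correct and is essentially the argument the paper delegates to \cite{RS03}: the PBW property reduces to the group--vector overlap, which yields the equivariance condition (1), together with the Jacobi/cyclic identity $b_g(u,v)(1-\rho(g))(w)+\text{cyc.}=0$, whose linear-algebra analysis (rank of $1-\rho(g)$ at most $2$, the rank-one case killed by $\rho(g)$-invariance of $b_g$) gives (2)--(3), and elements of $\Ker\rho$ contribute no constraint beyond (1) --- exactly the observation the paper invokes when asserting the non-faithful case follows verbatim. Your additional remark that (3) is already a consequence of (1) and (2) (since for $h\in Z_G(g)$ the form $b_g$ is $\rho(h)$-invariant and nondegenerate on the $\rho(h)$-stable two-dimensional space $\{v-\rho(g)(v)\}$) is also correct and does not affect the stated equivalence.
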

The above statements follows immediately from the proofs given in \cite{RS03} for the faithful case. The only variation is that $1$ in \cite{RS03} is replaced by the set $\Ker \rho$.

\begin{subsection}{Non-faithful Drinfeld algebras}

  In the recent literature, Drinfeld algebras have predominately been considered with $G$ a subgroup of $GL(V)$, however Drinfeld originally expressed them with a potentially non-faithful representation. To address this disparity and to avoid confusion we will say that a Drinfeld algebra is a faithful Drinfeld algebra if the representation involved is faithful and we will say that a Drinfeld algebra is non-faithful if the representation is non-faithful. The class of Drinfeld algebras includes both faithful and non-faithful Drinfeld algebras.

\end{subsection}
\begin{subsection}{The Dirac operator for (non-faithful) Drinfeld algebras}\label{Dirac}

If $V$ has a $G$-invariant symmetric bilinear form then one can define a Dirac operator $\mathcal{D}$. In \cite{C16} Dirac cohomology is defined for any faithful Drinfeld algebra. Furthermore an equation involving the square of the Dirac operator is proved \cite[Theorem 2.7]{CT11}. The extension of these theorems to the case of non-faithful representations is clear from the proofs. We will however give the equivalent formulation of the theorems in the non-faithful case.  In this section we will denote a Drinfeld algebra by $\mathbb{H}$.

\begin{subsubsection}{The Clifford algebra}  Let $\langle , \rangle$ be a $G$-invariant non-degenerate bilinear form on $V$. The Clifford algebra $C(V)$ associated to $V$ and $\langle , \rangle$ is the quotient of the tensor algebra $T(V)$ by the relations
$$v \cdot v' + v' \cdot v = - 2\langle v ,v'\rangle.$$
 The Clifford algebra has a filtration by degrees and a $\mathbb{Z}/2\mathbb{Z}$-grading by parity of degrees. In this grading $C(V) = C(V)_0 \oplus C(V)_1$. We define an automorphism $\epsilon: C(V) \to C(V)$ which is the identity on $C(V)_0$ and minus the identity on $C(V)_1$. Let us extend $\epsilon$ to be an automorphism of $\mathbb{H}\otimes C(V)$ by defining $\epsilon$ be the identity on $\mathbb{H}$. We define an anti-automorphism, the transpose of $C(V)$, a antihomomorphism such that, $v^t = -v$ for all $v \in V$. The Pin group is: 
$$\Pin(V) = \{ a\in C(V)^\times : \epsilon(a)  V  a^{-1} \subset V, a^t = a^{-1}\}.$$
Let $(\rho,V)$ be a representation of $G$ with $G$-invariant form. This establishes $\rho(G)$ as a subgroup of $O(V)$.
The Pin group is a double cover of $O(V)$ with surjection $p:\Pin(V) \to O(V)$. We define the pin double cover of $\rho(G) \subset O(V)$ as 
$$\widetilde{\rho(G)} := p^{-1}(\rho G) \subset \Pin (V).$$
Note that $\widetilde{\rho(G)}$ is not a double cover of $G$ but it is a double cover of $\rho(G)$. We construct a cover of $G$. We will define $\tilde{G}$ to be the semi direct product $ \Ker \rho \rtimes \widetilde{\rho(G)} $ with cross multiplication:
$$(h,\tilde{g})\cdot (h',\tilde{g}') = (hg^{-1}h'g, \tilde{g}\tilde{g}'), \hspace{1cm} \text{ for all } \tilde{g},\tilde{g}' \in \widetilde{\rho(G)} \text{ and } h, h' \in \Ker\rho.$$
Given $\tilde{G}$ we can embed it in $\mathbb{H} \otimes C(V)$ via 
$$\Delta: \tilde{G} \to \mathbb{H} \otimes C(V),$$
$$\Delta(\tilde{g}, h) = hp(\tilde{g})\otimes \tilde{g}, \hspace{1cm} \tilde{g} \in \widetilde{\rho(G)}, h \in \Ker\rho.$$
For more information on the Clifford algebra see \cite{HP06} and \cite{M13}.
\end{subsubsection}
\begin{subsubsection}{The Dirac element} 
Given any basis $\{v_i\}$ of $V$ and dual basis $\{v^i\}$ with respect to $\langle , \rangle$ we define the Dirac element
$$\mathcal{D} = \sum_{i} v_i \otimes v^i \in \mathbb{H} \otimes C(V).$$
 We give a formula for $\mathcal{D}^2$. This is equivalent to \cite[Theorem 2.7]{C16},. The only variation being that $\ker \rho$ replaces $1$.

For every $g\in G(b)$ set, $$\textbf{k}_g = \sum_{i,j} b_g(v_i,v^j)v^iv_j \in C(V),$$
and $$\textbf{h} = \sum_i v_iv^i \in \mathbb{H}.$$
The commutation relation defined for a Drinfeld algebra shows:
$$\mathcal{D}^2 = -\textbf{h}\otimes 1 +\frac{1}{2} \sum_{g \in G(b)} g \otimes \textbf{k}_g.$$
This result is \cite[Lemma 2.5]{C16}.
Recall $G(b)  = \{g\in G : b_g \neq 0\}$, we write $\widetilde{G}(b)$ for the cover of this subset.

\begin{lemma}\label{productofalphabeta} Similarly to \cite[Lemma 2.6]{C16} every element $g$ in $G(b) /\Ker \rho$ can be expressed a a product of two reflections. Every element in $G(b) \setminus\Ker \rho $ can be written as an coset representative of $G(b)/\Ker \rho$ conjugated by an element in $\Ker \rho$. Therefore given $g \in G(b) \setminus \Ker \rho$, there exists an $h \in \Ker \rho$ and $\alpha,\beta \in V$ such that $g = h^{-1} s_\alpha s_\beta h$ and the roots $\alpha, \beta $ span the space $ (V^{\rho(g)})^\perp$. We scale $\alpha$ and $\beta$ such that  $\langle \alpha ,\alpha \rangle = \langle \beta ,\beta \rangle = 1$.\end{lemma}
\begin{proof} See proof of \cite[Lemma 2.6]{C16} with $G$ replaced by $G/ \Ker(\rho)$. \end{proof}
For every coset representative $g \in G(b)/\Ker(\rho)$ define
$$\tilde{g} = \alpha\beta \in C(V), \hspace{1cm} c_{\tilde{g}} = \frac{b_g(\alpha,\beta)}{1 - \langle \alpha ,\beta\rangle^2}\in \mathbb{C}, \hspace{1cm} e_{g} = \frac{b_g(\alpha,\beta)\langle \alpha \beta \rangle}{1 - \langle \alpha ,\beta\rangle^2}\in \mathbb{C}.$$
Every $x\in \widetilde{G}(b)$ can be written as $h^{-1} g h$ where $g$ is a coset representative of $\tilde{G}(b)/ \Ker \rho$ and $h \in \Ker \rho$. Lemma \ref{productofalphabeta} gives $g = s_\alpha s_\beta$ and $\tilde{g} = \alpha\beta \in C(V)$.
We define, for $x = h^{-1}gh \in \tilde{G}$,
$$\tilde{x} = \tilde{g} = \alpha \beta \in C(V), \hspace{1cm} c_{\tilde{x}} = c_{\tilde{g}_C}, \hspace{1cm} e_{x} = he_{g_C}h^{-1}.$$

Let us define the Casimir elements, $\Omega_\mathbb{H}$ in $\mathbb{H}$ and $\Omega_{\tilde{G}}$ in $\tilde{G}$.
$$\Omega_{\mathbb{H}} = \textbf{h} - \sum_{g \in G(b) / \Ker\rho} e_g g \in \mathbb{H}^G,$$
$$\Omega_{\tilde{G}} =   \sum_{\substack{h\in \Ker\rho\\ g \in G(b) /\Ker \rho}}  h^{-1}\tilde{g}h c_{\tilde{g}}\in \mathbb{C}[\tilde{G}]^{\tilde{G}}.$$

\begin{theorem} \cite[c.f. Theorem 2.7]{C16} The square of the Dirac element can be expressed as a sum of the two Casimir elements plus terms from the kernel;
$$\mathcal{D}^2 = -\Omega_\mathbb{H} \otimes 1 +\Delta(\Omega_{\tilde{G}}) +  \frac{1}{2} \otimes \sum_{g \in \ker \rho} \textbf{k}_g.$$ 
\end{theorem}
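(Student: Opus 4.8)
The plan is to start from the already-established formula $\mathcal{D}^2 = -\mathbf{h}\otimes 1 + \frac{1}{2}\sum_{g\in G(b)} g\otimes \kappa_g$ (Lemma~2.5 of \cite{C16}, quoted above) and to reorganise the right-hand side according to whether $g$ lies in $\Ker\rho$ or not. The terms with $g\in\Ker\rho$ are exactly the error term $\frac{1}{2}\otimes\sum_{g\in\ker\rho}\kappa_g$ appearing in the statement — note that for such $g$ we have $\rho(g)=1$, so $p(\tilde g)=1$ acts trivially and the $\mathbb{H}$-factor is genuinely the scalar $1$; here I would just remark that $\kappa_g$ is the Clifford-algebra element attached to the form $b_g$ and carry it along untouched. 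So the real content is to show that
$$\frac{1}{2}\sum_{g\in G(b)\setminus\Ker\rho} g\otimes\kappa_g \;=\; \sum_{g\in G(b)\setminus\Ker\rho} e_g\, g\otimes 1 \;+\; \Delta(\Omega_{\tilde G}) \;-\; 0,$$
i.e. that the non-kernel part of $\frac{1}{2}\sum g\otimes\kappa_g$ splits as the $\mathbb{H}$-Casimir correction plus $\Delta(\Omega_{\tilde G})$. Combined with $-\mathbf{h}\otimes 1 = -\Omega_{\mathbb{H}}\otimes 1 - \sum_{g}e_g g\otimes 1$, this gives the claim.

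The key computational step is the local analysis of a single coset $g\in (G(b)\setminus\Ker\rho)/\Ker\rho$. Using the Lemma just above, write $g = h^{-1}s_\alpha s_\beta h$ with $h\in\Ker\rho$, $\alpha,\beta$ an orthonormal-ish basis of $(V^{\rho(g)})^\perp$. First I would handle the base case $h=1$, $g=s_\alpha s_\beta$: expand $\kappa_g = \sum_{i,j} b_g(v_i,v^j) v^i v_j$ in a basis adapted to the decomposition $V = V^{\rho(g)}\oplus (V^{\rho(g)})^\perp$. Since $\Ker b_g = V^{\rho(g)}$ by condition (2) of the Drinfeld theorem, only the $\alpha,\beta$ directions contribute, and a direct Clifford-algebra computation (exactly as in \cite{C16}) yields $\kappa_g = 2c_{\tilde g_C}\,\alpha\beta - 2e_{g_C}\cdot 1 = 2c_{\tilde g_C}\,\tilde g_C - 2e_{g_C}$ in $C(V)$, using the definitions of $c_{\tilde g_C}$ and $e_{g_C}$. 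Hence $\frac12 (s_\alpha s_\beta)\otimes\kappa_g = c_{\tilde g_C}\,(s_\alpha s_\beta)\otimes\tilde g_C - e_g\,(s_\alpha s_\beta)\otimes 1$, and by the definition of $\Delta$ one recognises $(s_\alpha s_\beta)\otimes\tilde g_C = \Delta(1,\widetilde{s_\alpha s_\beta})$ — this is where the embedding $\Delta(\tilde g,h)=hp(\tilde g)\otimes\tilde g$ is used, with $h=1$ and $p(\widetilde{s_\alpha s_\beta})=s_\alpha s_\beta$.

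Next I would propagate this through the $\Ker\rho$-conjugation. For general $g=h^{-1}s_\alpha s_\beta h$, condition (1) of the Drinfeld theorem gives $b_g(u,v) = b_{s_\alpha s_\beta}(\rho(h)u,\rho(h)v)$, but $\rho(h)=1$ since $h\in\Ker\rho$, so in fact $b_g = b_{s_\alpha s_\beta}$ and hence $\kappa_g = \kappa_{s_\alpha s_\beta}$; the only thing that changes across the coset is the group element $g$ itself and the Clifford element is conjugated, $\tilde x = h\tilde g h^{-1}$, with $c_{\tilde x}=c_{\tilde g_C}$, $e_x = he_{g_C}h^{-1}$ — exactly matching the definitions set up before the theorem. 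Summing $\frac12\, x\otimes\kappa_x$ over all $x\in\widetilde{G}(b)\setminus\Ker\rho$ then reassembles, on one hand, $\sum_{h^{-1}gh} c_{\tilde g}\,(h,\tilde g)$ pushed through $\Delta$ — i.e. $\Delta(\Omega_{\tilde G})$ — and on the other hand $-\sum_{g\in G(b)\setminus\Ker\rho} e_g g\otimes 1$; here one must be slightly careful about the $2$-to-$1$ nature of $\widetilde{G}(b)\to G(b)$ and check that the two lifts $\pm\tilde g$ contribute compatibly (the sign cancels in $\tilde g\otimes\tilde g_C$ since $\tilde g_C = \alpha\beta$ picks up the same sign), so the double cover does not introduce a spurious factor of $2$. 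Adding back the $\Ker\rho$ terms untouched gives the displayed identity.

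The main obstacle I anticipate is precisely this bookkeeping of the double cover together with the coset-by-$\Ker\rho$ structure: making sure that ``every $x\in\widetilde{G}(b)\setminus\Ker\rho$ can be written $h^{-1}gh$ with $g\in\tilde G(b)/\Ker\rho$'' is used consistently, that the semidirect-product multiplication $(h,\tilde g)(h',\tilde g') = (hg^{-1}h'g,\tilde g\tilde g')$ in $\tilde G$ matches the conjugation $h^{-1}s_\alpha s_\beta h$ happening inside $\mathbb{H}$, and that $\Delta$ is genuinely an algebra map on the relevant elements so that $\Delta(\Omega_{\tilde G})$ is the honest image of the group Casimir. Everything else — the Clifford identity for $\kappa_{s_\alpha s_\beta}$, the rewriting of $\mathbf{h}$ — is routine and identical to \cite{C16}; indeed, as the authors note, the proof is ``purely the proof in \cite{C16}'' with $1$ replaced by $\Ker\rho$, so the write-up can legitimately be brief, flagging only the points where $\Ker\rho$ enters.
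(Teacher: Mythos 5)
Your overall route is the same as the paper's: the paper gives no independent argument but defers to the proof of \cite{C16}, Theorem 2.7, with $1$ replaced by $\Ker\rho$, and your plan (start from $\mathcal{D}^2=-\mathbf{h}\otimes 1+\tfrac12\sum_{g\in G(b)}g\otimes\kappa_g$, split off the $\Ker\rho$ terms, analyse each non-kernel coset via $g=h^{-1}s_\alpha s_\beta h$, and reassemble the two Casimirs) is exactly that proof. However, two points in your write-up do not survive scrutiny. First, a sign problem that makes the argument as written fail to close: with the paper's conventions ($vv'+v'v=-2\langle v,v'\rangle$, $c_{\tilde g_C}=\frac{b_g(\alpha,\beta)}{1-\langle\alpha,\beta\rangle^2}$, $e_{g_C}=\frac{b_g(\alpha,\beta)\langle\alpha,\beta\rangle}{1-\langle\alpha,\beta\rangle^2}$) the local Clifford computation gives $\kappa_g=2c_{\tilde g_C}\,\alpha\beta+2e_{g_C}$, i.e.\ $\kappa_g$ is proportional to $\alpha\beta+\langle\alpha,\beta\rangle$, with \emph{both} terms of the same sign (check it in an orthonormal basis $e_1,e_2$ of $(V^{\rho(g)})^\perp$, where $\kappa_g=2b_g(e_1,e_2)e_1e_2$ and $e_1e_2=(\alpha\beta+\langle\alpha,\beta\rangle)/\sqrt{1-\langle\alpha,\beta\rangle^2}$). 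This $+e_{g_C}$ is precisely what is needed, since $\mathbf{h}=\Omega_{\mathbb{H}}+\sum e_g g$ produces $-\sum e_g\,g\otimes 1$ that must be cancelled. You state the correct target identity (non-kernel sum $=\sum e_g\,g\otimes 1+\Delta(\Omega_{\tilde G})$), but your local formula $\kappa_g=2c_{\tilde g_C}\tilde g_C-2e_{g_C}$ and your reassembly line ``on the other hand $-\sum e_g\,g\otimes 1$'' contradict it; taken literally they leave an uncancelled $-2\sum e_g\,g\otimes 1$ and do not yield the theorem. This is a fixable slip, but as written the proof is internally inconsistent.

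Second, your treatment of the kernel term contains a genuine error of reasoning: you claim that for $g\in\Ker\rho$ ``the $\mathbb{H}$-factor is genuinely the scalar $1$'' because $\rho(g)=1$. That conflates $g$ acting trivially on $V$ with $g$ being the identity in $\mathbb{C}[G]\subset\mathbb{H}$; a nontrivial kernel element remains a nontrivial group-algebra element, and what the splitting of Lemma 2.5 actually produces is $\tfrac12\sum_{g\in\Ker\rho\cap G(b)}g\otimes\kappa_g$, with the group element present in the first factor. The theorem's expression $\tfrac12\otimes\sum_{g\in\ker\rho}\kappa_g$ should be read as (or corrected to) that sum --- the same loose notation recurs in the definition of $\tilde\Omega_{\mathbb{H}}$ --- rather than justified by the false claim that $g=1$ there. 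Your remaining points (using condition (1) with $\rho(h)=1$ to get $\kappa_{h^{-1}s_\alpha s_\beta h}=\kappa_{s_\alpha s_\beta}$, and the care about the two lifts $\pm\tilde g$ contributing compatibly so the double cover introduces no factor of $2$) are correct and are exactly the places where the non-faithful case differs from \cite{C16}.
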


\end{subsubsection}
\begin{subsubsection}{Vogan's Morphism}
Let $\tilde{\Omega}_{\mathbb{H}} = \Omega_{\mathbb{H}} - \frac{1}{2} \otimes \sum_{h \in \Ker\rho} \textbf{k}_h$, define
$$\textbf{A} = Z_{\mathbb{H}\otimes C(V)} (\tilde{\Omega}_{\mathbb{H}}) \subset (\mathbb{H} \otimes C(V))^{\tilde{G}}.$$
If $\Ker\rho \cap G(b) = \emptyset$ then $\textbf{A} = \mathbb{H} \otimes C(V)$.
Define a derivation
$$d:\mathbb{H} \otimes C(V) \to \mathbb{H} \otimes C(V),$$
$$d(a) = \mathcal{D}a - \epsilon(a) \mathcal{D}.$$
The Dirac operator $\mathcal{D}$ interchanges the trivial and $\Det \tilde{G}$ - isotypic spaces of $\textbf{A}$. We define $d_{\triv}$ and $d_{\Det}$ to be the restriction of $d$ to the trivial and $\Det \tilde{G}$ - isotypic spaces. 
We state the theorems in \cite{C16} but note that the proofs apply verbatim to this case.
\begin{theorem}\label{directsummand}\cite[c.f. Theorem 3.5]{C16}The kernel of $d_{\triv}$ equals:
$$\Ker d_{\triv} = \im d_{\Det} \oplus \Delta (\mathbb{C}[\tilde{G}]^{\tilde{G}}).$$\end{theorem}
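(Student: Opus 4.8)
The plan is to follow the Vogan-morphism argument of Huang–Pandžić as adapted to Drinfeld algebras in \cite{C16}, verifying that nothing in that argument uses faithfulness of $\rho$. First I would record the three structural facts about the derivation $d$ and the Dirac element on the algebra $\textbf{A}=Z_{\mathbb{H}\otimes C(V)}(\tilde\Omega_{\mathbb{H}})$: (a) $d$ squares to zero on $\textbf{A}$, because $d^2(a)=\mathcal{D}^2 a - a\mathcal{D}^2$ up to sign, and the formula $\mathcal{D}^2=-\tilde\Omega_{\mathbb{H}}\otimes 1+\Delta(\Omega_{\tilde G})$ on $\textbf{A}$ (obtained from the $\mathcal{D}^2$ theorem above by absorbing $\tfrac12\sum_{h\in\Ker\rho}\kappa_h$ into $\tilde\Omega_{\mathbb{H}}$) shows $\mathcal{D}^2$ is central in $\textbf{A}\rtimes\tilde G$ modulo $\Delta(\mathbb{C}[\tilde G]^{\tilde G})$, which in turn commutes with all of $\textbf{A}$; (b) $d$ is $\epsilon$-anti-commuting, hence swaps the $\triv$ and $\Det\tilde G$ isotypic components of $\textbf{A}$ under the $\tilde G$-action, giving the two restrictions $d_{\triv}$, $d_{\Det}$ with $d_{\triv}d_{\Det}=0$ and $d_{\Det}d_{\triv}=0$; (c) $\Delta(\mathbb{C}[\tilde G]^{\tilde G})$ lies in $\Ker d_{\triv}$, since $\epsilon$ is the identity on the $\mathbb{H}$-factor and $\Delta$ of a central group element commutes with $\mathcal{D}$.

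Granting these, the inclusion $\supseteq$ is immediate: $\im d_{\Det}\subseteq\Ker d_{\triv}$ by $d^2=0$, and $\Delta(\mathbb{C}[\tilde G]^{\tilde G})\subseteq\Ker d_{\triv}$ by (c); the sum is direct because $\Delta(\mathbb{C}[\tilde G]^{\tilde G})$ sits in the $\triv$-isotypic part while $\im d_{\Det}$ sits (by (b)) in a complementary isotypic part — more precisely one checks $\im d_{\Det}$ meets $\Delta(\mathbb{C}[\tilde G]^{\tilde G})$ only in $0$ by comparing Clifford-degree/parity or by the projection argument of \cite{C16}. For the reverse inclusion $\subseteq$, take $a\in\Ker d_{\triv}$ and decompose $\mathbb{C}[\tilde G]^{\tilde G}$-equivariantly; the key computation, exactly as in Huang–Pandžić's original proof of the analogous statement for $\mathfrak{g}=\mathfrak{k}\oplus\mathfrak{p}$, is that $\textbf{A}=\im d\oplus(\text{harmonics})$ and that on the trivial isotypic piece the harmonic part is precisely $\Delta(\mathbb{C}[\tilde G]^{\tilde G})$; this is where one uses that $\mathcal{D}^2$ differs from a central element only by $\Delta(\Omega_{\tilde G})$, so an element killed by $d_{\triv}$ and orthogonal to $\im d_{\Det}$ must be annihilated by $\mathcal{D}$ and hence, being in the centralizer $\textbf{A}$, forced into the image of $\Delta$.

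The main obstacle is purely bookkeeping rather than conceptual: one must check that restricting to $\textbf{A}=Z_{\mathbb{H}\otimes C(V)}(\tilde\Omega_{\mathbb{H}})$ — which in the non-faithful case is generally a proper subalgebra of $\mathbb{H}\otimes C(V)$, unlike the faithful case where $\Ker\rho\cap G(b)=\emptyset$ forces $\textbf{A}=\mathbb{H}\otimes C(V)$ — does not break the two Hodge-type decompositions. Concretely one needs that $d$ preserves $\textbf{A}$ (which follows since $\tilde\Omega_{\mathbb{H}}$ is $\tilde G$-invariant and $d$ is built from $\mathcal{D}$ whose square is $-\tilde\Omega_{\mathbb{H}}$ plus terms centralizing $\textbf{A}$), and that the $\tilde G$-module decomposition of $\textbf{A}$ into isotypic components is compatible with $d$. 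Once $\textbf{A}$ is a $d$-stable, $\tilde G$-stable algebra on which $\mathcal{D}^2$ is central modulo $\Delta(\mathbb{C}[\tilde G]^{\tilde G})$, the proof of \cite[Theorem 3.5]{C16} applies verbatim, which is why, as noted in the text, we do not reproduce the computation here.
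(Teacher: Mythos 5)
Your plan is essentially the paper's own argument: the paper gives no independent proof of this theorem, but simply observes that the proof of Theorem 3.5 in \cite{C16} applies verbatim once one knows $\mathcal{D}^2=-\tilde{\Omega}_{\mathbb{H}}\otimes 1+\Delta(\Omega_{\tilde G})$ on $\textbf{A}$ and that $\textbf{A}$ is $d$- and $\tilde G$-stable, which is exactly what you check. One small correction to your sketch: since $d$ interchanges the trivial and $\Det\tilde G$ isotypic components, $\im d_{\Det}$ lies in the \emph{same} (trivial) isotypic component as $\Delta(\mathbb{C}[\tilde{G}]^{\tilde{G}})$, so the directness of the sum cannot come from isotypic separation but only from the projection/harmonic-decomposition argument of \cite{C16} that you correctly fall back on.
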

Since  $d$ is a derivation then $\Ker d_{\triv}$ is an algebra. The following theorem is the statement of Vogan's Dirac homomorphism in the non-faithful Drinfeld case
\begin{theorem} \cite[c.f. Theorem 3.8]{C16} The projection $\zeta: \Ker d_{\triv} \to \mathbb{C}[\tilde{G}]^{\tilde{G}}$ defined in Theorem \ref{directsummand} is an algebra homomorphism. \end{theorem}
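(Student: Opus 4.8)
The plan is to realise $\zeta$ as a composite of algebra homomorphisms. Recall that, by Theorem \ref{directsummand}, $\Ker d_{\triv}=\im d_{\Det}\oplus\Delta(\mathbb{C}[\tilde{G}]^{\tilde{G}})$, and that $\zeta$ is the projection of $\Ker d_{\triv}$ onto the second summand followed by the inverse of $\Delta$. Since $\mathbb{C}[\tilde{G}]^{\tilde{G}}$ is the centre of $\mathbb{C}[\tilde{G}]$ it is a unital subalgebra, and $\Delta$ --- the linear extension of the group embedding $\tilde{G}\hookrightarrow\mathbb{H}\otimes C(V)$ --- is an injective algebra homomorphism; so it suffices to show that $\Ker d_{\triv}$ is a subalgebra of $\mathbb{H}\otimes C(V)$ and that $\im d_{\Det}$ is a two-sided ideal inside it. Granting this, $\Delta(\mathbb{C}[\tilde{G}]^{\tilde{G}})$ is a subalgebra of $\Ker d_{\triv}$ which, by Theorem \ref{directsummand}, maps isomorphically onto the quotient algebra $\Ker d_{\triv}/\im d_{\Det}$, and $\zeta$ is the quotient map, followed by the inverse of that isomorphism, followed by $\Delta^{-1}$ --- a composite of algebra maps, hence an algebra homomorphism.

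That $\Ker d_{\triv}$ is a unital subalgebra is quick: it equals $\textbf{A}^{\tilde{G}}\cap\Ker d$, the intersection of the trivial-isotypic part $\textbf{A}^{\tilde{G}}$ (a subalgebra, since a product of $\tilde{G}$-invariants is $\tilde{G}$-invariant) with $\Ker d$ (closed under multiplication because $d$ is a super-derivation, $d(ab)=d(a)b+\epsilon(a)\,d(b)$, with $d(1)=0$). For the ideal property, take $a\in\Ker d_{\triv}$ and $u=d(y)\in\im d_{\Det}$ with $y$ in the $\Det$-isotypic part of $\textbf{A}$. Using $d(a)=0$, the identity $d\circ\epsilon=-\epsilon\circ d$ (whence $d(\epsilon(a))=0$), and the super-Leibniz rule, one obtains the two identities $u\,a=d(ya)$ and $a\,u=d(\epsilon(a)\,y)$. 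Now $\epsilon$ commutes with the $\tilde{G}$-action on $\textbf{A}$, since that action is implemented by conjugation by the $\epsilon$-homogeneous elements $\Delta(\tilde{g})=hp(\tilde{g})\otimes\tilde{g}$ (the Clifford factor $\tilde{g}\in\Pin(V)$ being homogeneous), so $\epsilon(a)$ is again $\tilde{G}$-invariant; and since the $\Det$-isotypic part is stable under left and right multiplication by $\tilde{G}$-invariants (the conjugation action is by algebra automorphisms), both $ya$ and $\epsilon(a)\,y$ lie in the $\Det$-isotypic part. Hence $u\,a$ and $a\,u$ are each $d$ applied to a $\Det$-isotypic element, i.e.\ they lie in $\im d_{\Det}$, which is therefore a two-sided ideal of $\Ker d_{\triv}$.

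Assembling the pieces, $\Ker d_{\triv}/\im d_{\Det}$ is an algebra, the composite $\Delta(\mathbb{C}[\tilde{G}]^{\tilde{G}})\hookrightarrow\Ker d_{\triv}\twoheadrightarrow\Ker d_{\triv}/\im d_{\Det}$ is an algebra isomorphism (a composite of algebra maps, bijective by Theorem \ref{directsummand}), and $\zeta$ is the quotient map, followed by the inverse of this isomorphism, followed by $\Delta^{-1}\colon\Delta(\mathbb{C}[\tilde{G}]^{\tilde{G}})\xrightarrow{\ \sim\ }\mathbb{C}[\tilde{G}]^{\tilde{G}}$; thus $\zeta$ is multiplicative and unital. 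The step I expect to require the most care is the bookkeeping interrelating the three structures on $\textbf{A}$ --- the associative product, the $\epsilon$-grading and the $\tilde{G}$-isotypic decomposition --- because the ideal argument relies on $\epsilon$ preserving isotypic components, on the $\Det$-isotypic part being stable under multiplication by invariants, and on $d$ sending the $\Det$-isotypic part into, and $\textbf{A}^{\tilde{G}}$ out of, $\textbf{A}^{\tilde{G}}$; but all of this is already built into the set-up of Section \ref{Dirac} and Theorem \ref{directsummand}, so no new input is needed.
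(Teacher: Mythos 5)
Your proof is correct, and it is essentially the argument the paper relies on: the paper gives no independent proof but cites Ciubotaru \cite{C16}, whose proof of Vogan's morphism proceeds exactly as you do — $\Ker d_{\triv}$ is a subalgebra because $d$ is an $\epsilon$-twisted derivation, $\im d_{\Det}$ is a two-sided ideal in it via the super-Leibniz identities $ua=d(ya)$, $au=d(\epsilon(a)y)$, and $\zeta$ is then the projection along this ideal composed with $\Delta^{-1}$. Your write-up simply supplies the details the paper leaves to the citation, so no further comparison is needed.
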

Note that since the image of $\zeta$ is an abelian algebra the morphism must factor through the abelianisation of $\Ker d_{\triv}$. 
Recall that when $b_{g} = 0$ for all $g \in \Ker \rho$ then $Z(\mathbb{H}) \otimes 1$ is contained in $\Ker d_{\triv}$. With this extra condition we can consider the dual of $\zeta$ which relates the representations of $\tilde{G}$ with characters of $Z(\mathbb{H})$. 
$$\zeta^*: \irr(\tilde{G}) = \Spec \mathbb{C}[\tilde{G}]^{\tilde{G}} \to \Spec Z(\mathbb{H}).$$
Here $\Spec$ denoted the algebra of characters on a given algebra.

\end{subsubsection}
\end{subsection}
\end{section}

\begin{section}{Generalised graded Hecke algebras}\label{generalisedgradedheckealg}

Ram and Shepler \cite{RS03} show that there does not exist a faithful Drinfeld algebra associated to the complex reflection group $G(m,1,n) = S_n \rtimes (\mathbb{Z}_m)^n$. However they define a candidate for an algebra that is similar to a graded Hecke algebra. Dez\'el\'ee \cite{D06} introduced the term generalized graded Hecke algebra. In Section \ref{DOsubalg} we show that these algebras are non-faithful Drinfeld algebras. We define a larger group of algebras denoted generalised graded Hecke algebras or GGH for short.

Set $A \star B$ to be the free product of unital associative complex algebras.
\begin{definition}\label{GGHdef} Let $W$ be a Weyl group generated by simple reflections $s_\alpha, \alpha \in \Pi$. $W$ acts on a commutative group $T$,  $\mathfrak{t}$ is a faithful complex  $W$-representation. $\langle,\rangle$ is a $W$-invariant pairing between the vector spaces $\mathfrak{t}^*$ and $\mathfrak{t}$. We define a parameter function 
$$\tilde{c}: \Pi \to \mathbb{C}[T].$$
The generalised graded Hecke algebra $\mathbb{GH}(W\rtimes T)$ is the quotient of the algebra 
$$ \mathbb{C} [W \rtimes T] \star S(\mathfrak{t})$$
by the relations
$$s_\alpha t = s_\alpha(t) s_\alpha + \langle \alpha ,t \rangle \tilde{c}(\alpha), \hspace{1cm} \forall t \in \mathfrak{t}, \alpha \in \Pi$$
$$[h,t] =0 \hspace{1cm} \forall t \in \mathfrak{t}, h \in T.$$

\end{definition}
In the case that $T$ is the trivial group this includes all graded Hecke algebra. In this form the relations look very similar to the graded Hecke algebras except that the parameter function takes values in $\mathbb{C}[T]$ instead of $\mathbb{C}$. 
In the following section we will prove a Langlands classification for generalised graded Hecke algebras. This follows Evens' \cite{E96} proof of the Langlands classification for graded Hecke algebras.

\begin{subsection}{Preliminaries for the Langlands classification}

Let $\{X,R,Y,\check{R}, \Pi\}$ be root datum, where $X$ and $Y$ are free finitely generated abelian groups and there exists a perfect bilinear pairing between them. The roots $R \subset X$ and coroots $\check{R} \subset Y$ are finite subsets with a bijection between them.  let $\Pi$ denote the simple roots $\{\alpha_1,...,\alpha_{l}\}$. Positive roots $R_+$ (resp. $\check{R}^+$) are the $\mathbb{N}$ span of $\Pi$ (respectively $\check{\alpha}$ for $\alpha \in \Pi$).  
Let $\mathfrak{t} = X \otimes \mathbb{C}$ and $\mathfrak{t}^* = Y \otimes \mathbb{C}$ be dual vector spaces, similarly let $\mathfrak{t}_\mathbb{R}, \mathfrak{t}_\mathbb{R}^*$ be the real spans of $X$ and $Y$. 
Let $T$ be a finite abelian group such that $W$ acts on $T$. Let  $\tilde{c}$ be a function from $\Pi$ to $\mathbb{C}T$ which is constant on conjugacy classes.

\begin{definition}\label{GGH}  The generalised graded Hecke algebra associated to the root system of $W$, $T$ and $\tilde{c}$ is defined to be the free product of algebras
$$\mathbb{GH}(W \rtimes T) \cong \mathbb{C}[W\rtimes T] \star S(\mathfrak{t}).$$
Modulo the relations:
$$s_\alpha x - s_\alpha(x) s_\alpha =\langle \alpha, x \rangle \tilde{c}(\alpha), \hspace{1cm} \forall x \in \mathfrak{t},  \alpha \in \Pi$$
$$g x = xg \hspace{1cm} \forall x \in  \mathfrak{t}, g \in T,$$
and the requirement that $\mathbb{C}[W\rtimes T]$ and $S(\mathfrak{t})$ are subalgebra 

 \end{definition} 
In the case of the generalised graded Hecke algebra associated to $G(m,1,n)$, we set $W= S_n$, $T =( \mathbb{Z}_m)^n$.
The function $\tilde{c}$ from $\Pi$ to $\mathbb{C}T$ is defined by $\tilde{c}( \epsilon_{i+1} - \epsilon_i ) = \sum_{l=0}^{m-1} g_i^lg_{i+1}^{-l}.$ The element $g_i$ is the $i^{th}$ generator of $(\mathbb{Z}_m)^n$.
We will denote a generalised graded Hecke algebra by $\mathbb{GH}$. 

\begin{definition}\label{parabolic} Given a subset $\Pi_P$ of $\Pi$ we can define a parabolic subgroup $W_P$ of $W$ generated by $s_\alpha$ for $\alpha \in \Pi_P$. The corresponding parabolic subalgebra of the generalised graded Hecke algebra $\mathbb{GH}_P$ is generated by $s_\alpha$ for $\alpha \in \Pi$ and $\mathbb{C}[T] \otimes S(\mathfrak{t})$. This is the generalised graded Hecke algebra associated to $W_p \rtimes  T$. \end{definition}

\begin{definition} Define $\mathfrak{a}$  to be the vector space $\{x\in \mathfrak{t} :  \check{\alpha}(x) = 0, \alpha \in \Pi\}$. Given any parabolic subalgebra $\mathbb{GH}_P$, set $\mathfrak{a}_P = \{x\in \mathfrak{t} :  \check{\alpha}(x) = 0, \alpha \in \Pi_P\}$.  Let $\mathfrak{a}_s$ be the perpendicular subspace to $\mathfrak{a}_P^*$ under the pairing  of $\mathfrak{t}$ and $ \mathfrak{t}^*$. \end{definition}

Then $\mathbb{GH}_P \cong \mathbb{GH}_{P_s}(W_P\rtimes T) \otimes S(\mathfrak{a}_P)$, where $\mathbb{GH}_{P_s}(W_P\rtimes T)$ is constructed (Definition \ref{GGH}) as the quotient of the algebra: 
$$\mathbb{GH}_{P_s}(W_P\rtimes T) \cong \mathbb{C}[W\rtimes T] \star S(\mathfrak{a}_s).$$

The commutative subalgebra $\mathscr{A} = \mathbb{C}T \otimes S(\mathfrak{t})$ features in all parabolic subalgebras. For every $\mathscr{A}$ module $V$ we can consider a weight space decomposition. 

\begin{definition}\label{weightspacedef} Let $\mathscr{A}= \mathbb{C}T \otimes S(\mathfrak{t})$ and $\mathscr{A}^*$ denote characters on this algebra. Given an $\mathscr{A}$ module $V$ and character $\mu \otimes \lambda \in\mathscr{A}^* =\mathbb{C}T^* \otimes S(\mathfrak{t})^*$ define the subspace: 
$$V_{\mu\otimes \lambda} = \{v \in V : y \otimes x (v) = \mu(y) \otimes \lambda(x) (v) \text{ for all } y \otimes x \in \mathscr{A}\}.$$ 
We can decompose $V$ into weight spaces:
$$V = \bigoplus_{\lambda \otimes \mu \in \mathscr{A}^*} V_{\mu \otimes \lambda}.$$
The weights of $V$ are the $\mu \otimes \lambda \in \mathscr{A}^*$ such that $V_{\mu\otimes \lambda}$ is non zero. \end{definition}

\begin{definition} Given simple roots $\alpha_1,...,\alpha_n$. The fundamental coweights $x_i \in \mathfrak{t}$ are such that 
$$\check{\alpha}_j(x_i) = \delta_{ij} \text{ and } \nu(x_i) = 0 \text{ for all } \nu \in \mathfrak{a}^*.$$
\end{definition}

\begin{example} Let $W = S_n$, $\mathfrak{t} = \Span\{\epsilon_1,...,\epsilon_n\}$, $\mathfrak{t}^* = \Span\{e_1,...,e_n\}$. $\mathfrak{a}^* = \Span\{a=e_1+e_2 +...+e_n\}.$ Let the simple roots be $\alpha_i = \epsilon_i - \epsilon_{i+1}$ for $i = 1,...,n-1$. Then the fundamental coweights are
$$x_i = \sum_{j\leq i}\epsilon_j -\frac{i}{n}\left(\epsilon_1+\epsilon_2+...+\epsilon_n\right).$$
The modification by $\epsilon_1+...+\epsilon_n$ is required so that $\mathfrak{a}^*$ is perpendicular to $x_i$. If we had defined $\mathfrak{t}$ to be $\Span\{\epsilon_1,...,\epsilon_n\}/ \Span\{\epsilon_1+...+\epsilon_n\}$ this would not be required as $\mathfrak{a}^* = 0$. 
\end{example}   

\begin{definition} An irreducible $\mathbb{GH}$ module $V$ is essentially tempered if for all weights $\mu \otimes \lambda: \mathbb{C}T \otimes S(\mathfrak{t}) \to \mathbb{C}$ of $V$, $Re(\lambda(x_i)) \leq 0$, for all fundamental coweights $x_i$. The module $V$ is tempered if $V$ is essentially tempered and $Re(\lambda|_{\mathfrak{a}_\mathbb{R}}) = 0$. Here $\mathfrak{a}_\mathbb{R}$ is the real span of $x \in X$ perpendicular to the coroots. \end{definition} 

Let $$\mathfrak{a}_P^{*^+} = \{\nu \in \mathfrak{a}_P^* : Re(\nu(\alpha)) > 0, \alpha \in \Pi - \Pi_P\}.$$

\end{subsection}

\begin{subsection}{The Langlands classification for generalized graded Hecke algebras}\label{Langlandssection}
\begin{theorem}

(i) Every irreducible $\mathbb{GH}$ module $V$ can be realised as a quotient of $\mathbb{GH}(W\rtimes T) \otimes_{\mathbb{GH}_P} U$, where $U = \hat{U} \otimes \mathbb{C}_\nu$ is such that $\hat{U}$ is an irreducible tempered $\mathbb{GH}_{P_s}$ module and $\mathbb{C}_\nu$ is a character of $S(\mathfrak{a}_P)$ defined by $\nu \in \mathfrak{a}_P^{*^+}$. 

(ii) If U is as in (i) then $\mathbb{H}(W\rtimes T) \otimes_{\mathbb{GH}_P} U$ has a unique irreducible quotient to be denoted $J(P,U)$.

(iii) If $J(P,\hat{U}\otimes \mathbb{C}_\nu) \cong J(P',\hat{U}'\otimes\mathbb{C}_{\nu'})$ then $P = P'$, $\hat{U} \cong \hat{U}'$ as $\mathbb{GH}_{P_s}$ modules and $\nu = \nu'$.

\end{theorem}
First we state a couple of lemmas of Langlands and a technical lemma about orbits of weights. 
Let $Z$ be a real inner product space of dimension $n$. Let $\{\check{\alpha}_1,...,\check{\alpha}_n\}$ be a basis such that $(\check{\alpha}_i,\check{\alpha}_j) \leq 0$ whenever $i \neq j$. Let $\{ \beta_1,..,\beta_n\}$ be a dual basis. For a subset $F$ of $\Pi$, let 
$$S_F = \{ \sum_{j\notin F} c_j \beta_j - \sum_{i\in F} d_i \check{\alpha}_j : c_j > 0 ,d_i \leq 0\}.$$

\begin{lemma}\cite[IV, 6.11]{BW80} Let $x \in Z$. Then $x \in S_F$ for a unique subset $F = F(x)$. \end{lemma}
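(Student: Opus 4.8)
The plan is to prove the two halves of the statement --- uniqueness of $F(x)$, then existence --- separately: uniqueness is a short orthogonality computation, existence an induction on $\dim Z$. Throughout I use that a vector $x$ witnessing $x\in S_F$ comes with a decomposition $x=\sum_{j\notin F}c_j\beta_j+\sum_{i\in F}d_i\check\alpha_i$ with all $c_j>0$ and all $d_i\le 0$, and that this decomposition is forced once $F$ is fixed: the subspaces $\Span\{\beta_j:j\notin F\}$ and $\Span\{\check\alpha_i:i\in F\}$ meet only in $0$ (any $w$ in the intersection has $(w,w)=\sum_{j\notin F,\,i\in F}c_jd_i(\beta_j,\check\alpha_i)=0$ since no $i\in F$ equals a $j\notin F$), and their dimensions add to $n$, so $Z$ is their direct sum.

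\emph{Uniqueness.} Suppose $x\in S_F\cap S_{F'}$, with witness decompositions $x=p+q=p'+q'$, where $p=\sum_{j\notin F}c_j\beta_j$, $q=\sum_{i\in F}d_i\check\alpha_i$, and $p',q'$ the primed analogues. Since $(\beta_j,\check\alpha_i)=\delta_{ij}$, the relation "no index outside $F$ lies in $F$" gives $(p,q)=0$ and $(p',q')=0$; while $(p,q')=\sum_{i\in F'\setminus F}c_id_i'\le 0$ and $(p',q)=\sum_{i\in F\setminus F'}c_i'd_i\le 0$, using $c_i>0$ and $d_i'\le 0$. Because $p-p'=q'-q$, we get $\|p-p'\|^2=(p-p',\,q'-q)=(p,q')+(p',q)\le 0$, hence $p=p'$; comparing the strictly positive $\beta$-coefficients then forces $\{1,\dots,n\}\setminus F=\{1,\dots,n\}\setminus F'$, i.e. $F=F'$. (This half uses only the dual-basis relations, not $(\check\alpha_i,\check\alpha_j)\le 0$.)

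\emph{Existence.} Induct on $n=\dim Z$, the case $n=0$ being trivial. If $(x,\check\alpha_i)>0$ for every $i$, then $x=\sum_i(x,\check\alpha_i)\beta_i\in S_\emptyset$. Otherwise fix $k$ with $(x,\check\alpha_k)\le 0$ and restrict attention to $\check\alpha_k^{\perp}$, equipped with the basis $\check\alpha_i'=\check\alpha_i-\frac{(\check\alpha_i,\check\alpha_k)}{(\check\alpha_k,\check\alpha_k)}\check\alpha_k$ for $i\ne k$. Two quick checks: the $\check\alpha_i'$ again have pairwise inner products $\le 0$ (the correction term is $\le 0$ precisely because $(\check\alpha_i,\check\alpha_k)\le 0$), and the vectors $\beta_j$ with $j\ne k$, which already lie in $\check\alpha_k^{\perp}$, form the dual basis of $\{\check\alpha_i'\}$ there. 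By the inductive hypothesis the orthogonal projection $\bar x$ of $x$ onto $\check\alpha_k^{\perp}$ satisfies $\bar x=\sum_{j\notin F'\cup\{k\}}c_j\beta_j+\sum_{i\in F'}d_i'\check\alpha_i'$ with $c_j>0$, $d_i'\le 0$, for some $F'\subseteq\{1,\dots,n\}\setminus\{k\}$. Substituting $\check\alpha_i'=\check\alpha_i+(\text{non-negative})\check\alpha_k$ and adding back $x-\bar x=\frac{(x,\check\alpha_k)}{(\check\alpha_k,\check\alpha_k)}\check\alpha_k$, which is a non-positive multiple of $\check\alpha_k$, exhibits $x$ in the form witnessing $x\in S_F$ with $F=F'\cup\{k\}$: the $\check\alpha_i$-coefficient for $i\in F'$ is $d_i'\le 0$, and the $\check\alpha_k$-coefficient is a sum of non-positive quantities.

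\emph{Main obstacle.} No single step is deep; the delicate part is the existence induction --- verifying that orthogonal projection onto $\check\alpha_k^{\perp}$ preserves the hypothesis $(\check\alpha_i,\check\alpha_j)\le 0$ and that the sign conditions survive when the projected decomposition is transported back to $Z$. That is the only place the "obtuse basis" hypothesis is used, and it is also the reason one needs $\beta_j\perp\check\alpha_k$ for $j\ne k$, so that the $\beta$-part of the decomposition carries over unchanged.
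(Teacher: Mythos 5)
Your proof is correct, and since the paper offers no argument for this lemma at all---it simply cites [BW80, IV.6.11]---you have supplied a self-contained proof of Langlands' combinatorial lemma where the paper relies on a reference. Both halves are sound: uniqueness via the dual-pairing computation $(p,q)=(p',q')=0$, $(p,q')\le 0$, $(p',q)\le 0$, giving $\|p-p'\|^2=(p,q')+(p',q)\le 0$ (and, as you note, this half needs only $(\beta_j,\check\alpha_i)=\delta_{ij}$, not obtuseness); existence by induction on $\dim Z$, projecting onto $\check\alpha_k^{\perp}$ for some $k$ with $(x,\check\alpha_k)\le 0$, where the checks that the projected coroots $\check\alpha_i'$ remain pairwise obtuse, that $\{\beta_j\}_{j\ne k}$ is their dual basis in $\check\alpha_k^{\perp}$, and that transporting the decomposition back to $Z$ only adds nonpositive multiples of $\check\alpha_k$, all go through as you state. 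One point worth flagging: your normalisation $x=\sum_{j\notin F}c_j\beta_j+\sum_{i\in F}d_i\check\alpha_i$ with $c_j>0$, $d_i\le 0$ is the intended statement---it is the form actually used later in the paper's proof of part (i), where the expansion is written $\sum_{j\notin F}c_j\beta_j-\sum_{i\in F}d_i\check\alpha_i$ with $d_i\ge 0$---whereas the displayed definition of $S_F$ in the paper carries a sign slip: taking $d_i\le 0$ together with the minus sign would make the $\check\alpha$-coefficients nonnegative, and then already in rank one the sets $S_\emptyset$ and $S_{\{1\}}$ overlap while neither contains $-\check\alpha_1$, so the lemma as literally printed would fail. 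You proved the corrected version, which is the one the Langlands classification argument requires.
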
 
If $x\in Z$ then let $x_0 = \sum_{j \notin F}c_j\beta_j$, where $x \in S_F$ and $x = \sum_{j\notin F} c_j \beta_j - \sum_{i\in F} d_i \check{\alpha}_i$. It is clear that if $x_0 = y_0$ then $F(x) = F(y)$. Define a partial order on $Z$ by setting $x \geq y$ if  $x - y = \sum_{t_i \geq 0} t_i\check{\alpha}_i$. 

\begin{lemma}\cite[IV, 6.13]{BW80} \label{lang2} If $x,y \in Z$ and $x \geq y$ then $x_0 \geq y_0$. \end{lemma}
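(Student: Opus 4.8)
\emph{Proof sketch / plan.} The plan is to exploit that the assignment $\Phi\colon x\mapsto x_0$ is continuous and piecewise linear --- on each region $S_F$ it is a fixed linear projection --- and to show that all of its directional derivatives in the coordinate directions $\check\alpha_1,\dots,\check\alpha_n$ lie in the cone $C^{*}:=\sum_i\mathbb{R}_{\ge 0}\check\alpha_i$. Since $x\ge y$ means precisely $x-y\in C^{*}$, following $\Phi$ along the segment from $y$ to $x$ then forces $x_0-y_0\in C^{*}$, which is the claim.

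The input from linear algebra is this. The Gram matrix $G=\big((\check\alpha_i,\check\alpha_j)\big)$ is symmetric positive definite with non-positive off-diagonal entries (a Stieltjes matrix); writing $G=D-N$ with $D$ the positive diagonal part and $N\ge 0$ entrywise, the matrix $D^{-1}N$ has spectral radius $<1$, so $G^{-1}=\big(\sum_{k\ge 0}(D^{-1}N)^{k}\big)D^{-1}$ has non-negative entries, and likewise every principal submatrix $G_F=\big((\check\alpha_i,\check\alpha_j)\big)_{i,j\in F}$ (being itself a Stieltjes matrix) has non-negative inverse. Since $G^{-1}$ is the Gram matrix of the dual basis, $(\beta_i,\beta_j)\ge 0$ for all $i,j$ and each $\beta_j$ is a non-negative combination of the $\check\alpha_i$; in particular a vector of $Z$ lies in $C^{*}$ exactly when all of its $\check\alpha$-coordinates are $\ge 0$.

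Next I would pin down $\Phi$. By the preceding lemma a point $x\in S_F$ has $x=\sum_{j\notin F}c_j\beta_j+(\text{a fixed }\mathbb{R}\text{-combination of the }\check\alpha_i,\ i\in F)$ with $c_j>0$ and $x_0=\sum_{j\notin F}c_j\beta_j$, so $\Phi$ restricted to $S_F$ is the linear projection $\pi_F$ of $Z$ onto $\Span(\beta_j:j\notin F)$ along $\Span(\check\alpha_i:i\in F)$ (this direct-sum decomposition of $Z$ is valid because the principal submatrices of $G^{-1}$ are nonsingular). Moreover $\Phi$ is continuous: if $x_n\to x$, pass to a subsequence with $F(x_n)=F$ constant, so $\Phi(x_n)=\pi_F(x_n)\to\pi_F(x)$, and a short argument --- the $\beta_j$-coordinates of $\pi_F(x)$ are limits of positive numbers hence $\ge 0$, and discarding the vanishing ones displays $x$ in some $S_{F'}$ with $\pi_{F'}(x)=\pi_F(x)$ --- identifies this limit with $\Phi(x)$. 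Finally, for each $k$ and each $F$ one has $\pi_F(\check\alpha_k)\in C^{*}$: if $k\in F$ then $\pi_F(\check\alpha_k)=0$, while if $k\notin F$ then $\pi_F(\check\alpha_k)=\check\alpha_k-\sum_{i\in F}v_i\check\alpha_i$ with $(v_i)_{i\in F}=G_F^{-1}\big((\check\alpha_k,\check\alpha_i)\big)_{i\in F}$, and since $G_F^{-1}$ has non-negative entries while each $(\check\alpha_k,\check\alpha_i)\le 0$ (as $i\ne k$), every $v_i\le 0$; hence all $\check\alpha$-coordinates of $\pi_F(\check\alpha_k)$ are $\ge 0$.

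To finish, suppose $x\ge y$ and write $x-y=\sum_k t_k\check\alpha_k$ with $t_k\ge 0$. The segment $\gamma(s)=y+s(x-y)$, $s\in[0,1]$, meets only finitely many of the cones $\overline{S_F}$, so $\Phi\circ\gamma$ is a continuous, piecewise affine path each of whose slopes is of the form $\pi_F(x-y)=\sum_k t_k\,\pi_F(\check\alpha_k)\in C^{*}$ for a suitable $F$. Hence $x_0-y_0=\Phi(\gamma(1))-\Phi(\gamma(0))$ is a non-negative combination of vectors of $C^{*}$, so lies in $C^{*}$; that is, $x_0\ge y_0$. The step I expect to be the genuine obstacle is the directional-derivative computation above --- the non-negativity of $G_F^{-1}$ for the principal submatrices $G_F$, i.e. the Stieltjes/M-matrix fact --- together with the technical point that $\Phi$ extends continuously across the walls between the regions $S_F$; once these are in hand, the remainder is bookkeeping, and one may alternatively package it as the contradiction argument of \cite[IV, 6.13]{BW80}, in which the would-be negative part of $x_0-y_0$ is paired with the relevant $x_0$ or $y_0$ to clash with the quadratic estimate $(p,x_0-y_0)\le-\lVert p\rVert^{2}$.
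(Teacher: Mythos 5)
Your sketch is correct, but note that the paper does not prove this lemma at all: it is quoted verbatim from Borel--Wallach \cite[IV, 6.13]{BW80}, so there is no in-paper argument to match. Your route is also genuinely different from the classical one you allude to at the end: Borel--Wallach argue directly by contradiction, pairing the would-be negative part of $x_0-y_0$ against $x_0$ and $y_0$ and using $(\check\alpha_i,\check\alpha_j)\le 0$, whereas you prove a stronger monotonicity statement for the map $\Phi\colon x\mapsto x_0$ by showing it is continuous, piecewise linear with pieces the orthogonal projections $\pi_F$ onto $\Span(\beta_j:j\notin F)$ (the orthogonal complement of $\Span(\check\alpha_i:i\in F)$, since $(\beta_j,\check\alpha_i)=\delta_{ij}$ --- this also gives the direct-sum decomposition more cheaply than invoking nonsingularity of principal submatrices of $G^{-1}$), and that $\pi_F(\check\alpha_k)\in C^{*}$ via non-negativity of $G_F^{-1}$ for the Stieltjes submatrices $G_F$; integrating along the segment then gives $x_0-y_0\in C^{*}$. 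The key computation $\pi_F(\check\alpha_k)=\check\alpha_k-\sum_{i\in F}v_i\check\alpha_i$ with $v=G_F^{-1}\bigl((\check\alpha_k,\check\alpha_i)\bigr)_{i\in F}\le 0$ is correct, and the continuity argument across walls goes through because the limit coefficients stay in the closed half-lines and dropping vanishing $c_j$'s lands $x$ in $S_{F'}$ with the same projection; the only bookkeeping worth making explicit is that $\Phi\circ\gamma$ is piecewise affine because each $S_F$ is convex, so its preimage along the segment is one of finitely many subintervals. What your approach buys is a conceptually transparent, quantitative statement (the increment $x_0-y_0$ is an explicit non-negative combination of the vectors $\pi_F(x-y)$) at the cost of the M-matrix fact and the continuity technicalities; the Borel--Wallach argument is shorter and avoids both.
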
 
\begin{lemma}\label{orbit} Given an irreducible $\mathbb{GH}_P$  module $V$, the set of weights $\{\lambda\otimes \mu\}$ are all in the same $W_P$ orbit. \end{lemma}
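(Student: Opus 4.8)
The plan is to use the defining relations of $\mathbb{GH}_P$ together with the irreducibility of $V$ and the fact that the parameter function $\tilde{c}(\alpha)$ takes values in $\mathbb{C}[T]$ which commutes with $T$. First I would recall that the commutative subalgebra $\mathscr{A} = \mathbb{C}[T]\otimes S(\mathfrak{t})$ gives a weight decomposition $V = \bigoplus V_{\mu\otimes\lambda}$, and the key point is to understand how the generators $s_\alpha$ ($\alpha\in\Pi_P$) and the group elements of $T$ move these weight spaces around. For $h\in T$, the relation $hx = xh$ for $x\in\mathfrak{t}$ together with $h y h^{-1} = w\cdot y$ (the $W$-action data is irrelevant here since $h\in T$ commutes with $\mathfrak{t}$; what matters is how $T$ acts on itself, trivially by conjugation) shows that $T$ preserves each $S(\mathfrak{t})$-eigenspace and permutes the $\mathbb{C}[T]$-characters; but since $T$ is abelian it fixes every character of $\mathbb{C}[T]$, so $T\cdot V_{\mu\otimes\lambda}\subseteq V_{\mu\otimes\lambda}$. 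Hence conjugation by $T$ does not enlarge the weight set.

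Next, for a simple reflection $s_\alpha$ with $\alpha\in\Pi_P$, I would compute $s_\alpha\cdot V_{\mu\otimes\lambda}$. Using $s_\alpha x = s_\alpha(x)s_\alpha + \langle\alpha,x\rangle\,\tilde{c}(\alpha)$ and the fact that $\tilde{c}(\alpha)\in\mathbb{C}[T]$ so that $\tilde{c}(\alpha)$ acts on $V_{\mu\otimes\lambda}$ by the scalar $\mu(\tilde{c}(\alpha))$ (up to nilpotents, on the generalized eigenspace), one sees that for $v\in V_{\mu\otimes\lambda}$ the vector $s_\alpha v$ lies in the generalized eigenspace for the character $(s_\alpha\mu)\otimes(s_\alpha\lambda)$ — where $s_\alpha$ acts on $\mathbb{C}[T]^*$ through the $W$-action on $T$ and on $S(\mathfrak{t})^*$ through the $W$-action on $\mathfrak{t}$. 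The nilpotent discrepancy coming from the $\langle\alpha,x\rangle\tilde{c}(\alpha)$ term only affects the generalized (not honest) eigenspace structure and does not change which character class one lands in; this is exactly the standard computation in the graded Hecke algebra setting, adapted by replacing the scalar $c(\alpha)$ with the element $\tilde{c}(\alpha)\in\mathbb{C}[T]$ which is central modulo the $W$-conjugation. Therefore $s_\alpha\cdot V_{\mu\otimes\lambda}\subseteq V_{s_\alpha(\mu\otimes\lambda)}$.

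Combining these two observations: the subspace $\bigoplus_{w\in W_P} V_{w(\mu_0\otimes\lambda_0)}$ is stable under all the algebra generators of $\mathbb{GH}_P$ (the elements of $T$, the $s_\alpha$ for $\alpha\in\Pi_P$, and $S(\mathfrak{t})$ which obviously preserves its own weight spaces), hence is a nonzero $\mathbb{GH}_P$-submodule of $V$. By irreducibility it equals $V$, so every weight of $V$ lies in the single $W_P$-orbit of $\mu_0\otimes\lambda_0$. The main obstacle — and the step requiring genuine care rather than citation — is the bookkeeping in the second paragraph: one must verify that the "error term" $\langle\alpha,x\rangle\tilde{c}(\alpha)$, now valued in $\mathbb{C}[T]$ rather than $\mathbb{C}$, still only contributes within a fixed generalized-eigenspace and does not spread weight mass onto a new $\mathbb{C}[T]$-character outside the $W_P$-orbit. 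This holds because $\tilde{c}(\alpha)\in\mathbb{C}[T]$ preserves every $\mathbb{C}[T]$-generalized-weight space of $V$ (as $T$ is abelian, each $V_{\mu\otimes\lambda}$ is a $\mathbb{C}[T]$-submodule), so the argument goes through essentially verbatim from Evens' proof once this closure property is noted.
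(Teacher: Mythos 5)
Your argument is essentially the paper's own proof: the paper disposes of the lemma by observing that $\bigoplus_{w\in W_P}V_{w(\mu\otimes\lambda)}$ is a $\mathbb{GH}_P$-submodule (only the $W_P$-part of $\mathbb{GH}_P$ fails to act by generalized eigenvalues), and irreducibility forces it to be all of $V$; you reach the same submodule via the same generators ($T$, $S(\mathfrak{t})$, and the $s_\alpha$ with $\alpha\in\Pi_P$), so the approach and conclusion are correct.

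One intermediate claim is stated too strongly: it is not true in general that $s_\alpha\cdot V_{\mu\otimes\lambda}\subseteq V_{s_\alpha(\mu\otimes\lambda)}$. From $x\,s_\alpha = s_\alpha\,s_\alpha(x) + \langle\alpha,x\rangle\tilde{c}(\alpha)$ one only gets $\bigl(x-(s_\alpha\lambda)(x)\bigr)s_\alpha v = \langle\alpha,x\rangle\tilde{c}(\alpha)v \in V_{\mu\otimes\lambda}$, so the correct inclusion is $s_\alpha\cdot V_{\mu\otimes\lambda}\subseteq V_{\mu\otimes\lambda}\oplus V_{s_\alpha(\mu\otimes\lambda)}$; already for the rank-one graded Hecke algebra ($T$ trivial, $c\neq 0$, $s_\alpha\lambda\neq\lambda$) the image of a $\lambda$-weight vector under $s_\alpha$ has a genuine component in $V_{\mu\otimes\lambda}$, not merely a nilpotent correction inside $V_{s_\alpha(\mu\otimes\lambda)}$. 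This does not damage your proof, since both summands lie in the $W_P$-orbit sum, which is all the stability argument requires.
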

\begin{proof}The group $W_P$ is the only part of $\mathbb{GH}_P$ which does not act by eigenvalues on $V_{\mu \otimes \lambda}$. For any $\mathbb{GH}_P$  module $U$ and a weight $\mu \otimes \lambda$ the subspace 
$$\bigoplus_{w\in W_P} U_{w(\lambda)\otimes w(\mu)}$$ 
is a $\mathbb{GH}_P$ submodule of $U$. \end{proof}

\begin{proof}[Proof of (i)]
The simple coroots $\check{\alpha}_1,...,\check{\alpha}_{n}$ will have a dual basis $\beta_1,...,\beta_{n}$ in $\mathfrak{t}_\mathbb{R}$, relative to the Killing form. Let $V$ be an irreducible $\mathbb{GH}$ representation. Let $\mu \otimes \lambda$ be an $\mathscr{A}$ weight of $V$ which is maximal among $Re(\lambda)$. Let $\Pi_P = F = F(Re(\lambda))$. Let $\mathfrak{a}_s$ (respectively $\mathfrak{a}_s^*$) be the elements of $\mathfrak{t}$ (respectively $\mathfrak{t}^*$) perpendicular to $\mathfrak{a}_P^*$ (respectively $\mathfrak{a}_P$). The space $\mathfrak{t}^*$ splits
$$\mathfrak{t}^* = \mathfrak{a}_P^* \oplus \mathfrak{a}_s^*.$$ 
We can restrict characters of $\mathfrak{t}$ to $\mathfrak{a}_P$ (respectively $\mathfrak{a}_s$) by considering the projection of the character in $\mathfrak{t}^*$ to $\mathfrak{a}_P^*$ (respectively $\mathfrak{a}_s^*$).

Let $\nu = \mu \otimes \lambda |_{\mathfrak{a}_P}$. Since $\lambda$ was considered maximal then by construction $\nu \in \mathfrak{a}_P^{*^+}$. Let $U$ be an irreducible representation of $\mathbb{GH}_P$ appearing in $V$ such that $S(\mathfrak{a}_P)$ acts by $\nu$. Let $\mu \otimes \phi $ be a $\mathbb{C}T \otimes S(\mathfrak{a}_s)$ weight of $U$. Since $\mathbb{C}T \otimes S(\mathfrak{t}) \cong \mathbb{C}T \otimes S(\mathfrak{a}_s) \otimes S(\mathfrak{a}_P)$ then
 $\mu \otimes \phi \otimes \nu = \mu \otimes (\phi + \nu)$ is a $\mathbb{C}T \otimes S(\mathfrak{a})$ weight of $V$. 
 $$Re(\phi + \nu ) = \sum_{j\notin F} c_j \beta_j - \sum_{j\in F} z_i \check{\alpha}_i, c_j > 0,$$
 while
 $$Re(\lambda ) = \sum_{j\notin F} c_j \beta_j - \sum_{j\in F} d_i \check{\alpha}_i, c_j > 0, d_i \geq 0.$$
 To prove $U$ is a tempered representation of $\mathbb{GH}_{P_s}$ it is sufficient to prove that $z_i \geq 0$.

 Let $F_2 = \{i \in F : z_i < 0 \}$ and $F_1 = F - F_2$. Then $Re(\phi + \nu) \geq \sum_{j \notin F} c_j \beta_j - \sum_{i \in F_1} z_i \check{\alpha}_i$. Thus by Lemma \ref{lang2} $Re(\phi + \nu)_0 \geq \sum_{j \notin F} c_j \beta_j = Re(\lambda)_0$. But $Re(\lambda) \geq Re(\phi + \nu)$, hence $Re(\lambda)_0 = Re(\phi + \nu)_0$ and therefore $F(Re(\lambda)_0) = F(Re(\phi +\lambda)_0)$. Thus  $\phi + \nu$ is in $S_F$ and $z_i \geq 0$ for all $i$. 
 The inclusion of $\mathbb{GH}_P$ modules $U \subset V$ induces a nonzero map $\pi: \mathbb{GH} \otimes_{\mathbb{GH}_P} U \to V$ given by $\pi(h \otimes w) = h.w$. Since $V$ is irreducible, $V$ is a quotient of $\mathbb{GH} \otimes_{\mathbb{GH}_P} U$. \end{proof}

This argument is very similar to the argument given by Evens for the case of graded Hecke algebras. 
Note that this argument implies that every weight $\mu' \otimes \lambda'$ of $U$ has $F(Re(\lambda')) = F$.

\begin{proof}[Proof of (ii)]

The space $U$ is naturally embedded in $\mathbb{GH}\otimes_{\mathbb{GH}_P} U$. $U$ is a $\mathbb{GH}_P$ module therefore it is invariant under $W_P$. Lemma \ref{orbit} implies that the weights of $\mathbb{GH}\otimes_{\mathbb{GH}_P} U$ are $w(\mu)\otimes w(\lambda)$ where $w \in W$ and $\mu \otimes \lambda$ is a weight of $U$.  Considering the weights of $(\mathbb{GH}\otimes_{\mathbb{GH}_P} U)/U$, these are $w(\mu) \otimes w(\lambda)$ where $w \neq 1$ and is a coset representative of $W /W_P$, alternatively $w \in W^P = \{w\in W : w(R_P^+) \subset R^+\}$. Note that for all $w \in W$ one can write $w$ as a product of $w^P \in W^P$ and $w_P \in W_P$. 

Let $\mu \otimes \lambda $ be a weight of $U$, and write 
$$Re(\lambda) =  \sum_{j\notin F} c_j \beta_j - \sum_{j\in F} d_i \check{\alpha}_i, c_j > 0, d_i \geq 0.$$ Then if $w \in W^P$,
$Re(w\lambda) = \sum_{j\notin F} c_j w\beta_j - \sum_{j\in F} d_i w \check{\alpha}_i.$
Define $\rho: \mathfrak{t} \to \mathbb{C}$ by $\rho(\check{\alpha}) = 1, \check{\alpha} \in \check\Pi$. Since $w: \Pi_P \to R^+$ then $\rho(w(\check{\alpha}_i)) \geq \rho (\check{\alpha}_i)$, for $ i \in F$. Since $\beta_j$ is a fundamental weight, $w(\beta_j) \leq \beta_j$, 
with equality if and only if each expression of $w$ as a product of simple reflections is such that each simple reflection fixes $\beta_j$. If we make this requirement for all $j \notin F$ then this implies $w \in W_P$ hence  $w \in W_P \cap W^P = \{1\}$, therefore $w = 1$. Thus we can assume that if $w \in W^P \setminus 1$ then $\rho(Re(w(\lambda))) < \rho( Re(\lambda))$. 

Fix a weight $\mu \otimes \lambda$ such that $\rho(Re(\lambda))$ is maximal, then $\mu \otimes \lambda$ can not occur as a weight of $(\mathbb{GH}\otimes_{\mathbb{GH}_P} U)/U$. 
This implies that if a submodule $Z$ of $\mathbb{GH}\otimes_{\mathbb{GH}_P} U$ contains $\mu \otimes \lambda$ then $Z$ contains $U$ and hence is $\mathbb{GH}\otimes_{\mathbb{GH}_P} U$. 
Define $I_{max}$ to be the sum of all submodules of $\mathbb{GH}\otimes_{\mathbb{GH}_P} U$ which do not contain $\mu\otimes \lambda$ then $I_{max}$ is maximal and $(\mathbb{GH}\otimes_{\mathbb{GH}_P} U)/I_{max}$ is the unique irreducible quotient.

\end{proof}

\begin{proof}[Proof of (iii)]
Suppose $\pi: J(P,U) \cong J(P',U')$. Let $\mu \otimes \lambda$ (respectively $\mu' \otimes \lambda'$) be a weight of $U$ (respectively $U'$) which is maximal with respect to $\rho$. Suppose $F(Re(\lambda)) \neq F(Re(\lambda'))$. Then it follows that $\mu\otimes\lambda$ is not a weight of $U'$ and $\mu'\otimes\lambda'$ is not a weight of $U$. Therefore $\mu \otimes \lambda$ is a weight of $(\mathbb{GH}\otimes_{\mathbb{GH}_P} U')/U'$ which suggests that $\rho(Re(\lambda)) < \rho(Re(\lambda'))$. However exchanging $\lambda$ with $\lambda'$ suggests $\rho(Re(\lambda')) < \rho(Re(\lambda))$, which can not be the case. Hence $F(Re(\lambda)) = F(Re(\lambda'))$ and $P = P'$.

Since $J(P,U) \cong J(P',U')$ is irreducible Lemma \ref{orbit} implies there exists a $w \in W$ such that $w(\mu)\otimes w(\lambda) = \mu'\otimes \lambda'$.  
If we suppose that $w \notin W_P$ then $w$ has part of its decomposition in $w^P$, by the proof of (ii) this suggests that 
$$\rho(Re(w(\lambda))) = \rho(Re(\lambda')) < \rho(Re(\lambda)).$$
However if this is the case then $\lambda$ is not maximal with respect to $\rho$. Therefore $w(\mu) = \mu'$ where $w \in W_P$. $\pi(U) = U'$ since $U$ (respectively $U'$) is the unique $\mathbb{GH}_P$ submodule which has a weight $\mu_1\otimes \lambda_1$ such that $\rho(Re(\lambda)) = \rho(Re(\lambda_1))$ and $\mu$ is in the same $W_P$ orbit as $\mu_1$. Similarly $\rho(Re(\lambda')) = \rho(Re(\lambda_1))$ and $\mu'$ is in the same $W_P$ orbit as $\mu_1$.
Hence $U \cong U'$ as $\mathbb{GH}_P$ submodules. 
\end{proof}

\end{subsection}

\end{section}

\begin{section}{Dunkl-Opdam subalgebra}\label{DOsubalg}
In this section we study the Dunkl-Opdam subalgebra, $\mathbb{H}_{DO}$ defined in \cite{DO03}. The algebra $\mathbb{H}_{DO}$ is a subalgebra of the rational Cherednik algebra associated to the complex reflection group $G(m,p,n)$. This subalgebra exists for any parameter $\underline{t}$ and its existence is independent of the parameters ${\textbf{c}}_1,...,\textbf{c}_{m-1}$. We show that $\mathbb{H}_{DO}$ is a naturally occurring example of a non-faithful Drinfeld algebra.  Section \ref{Dirac} endows $\mathbb{H}_{DO}$ with a Dirac operator. From the defining presentation given by \cite{DO03} this subalgebra is a generalized Hecke algebra. Therefore we have a Langlands classification for $\mathbb{H}_{DO}$. This sets up Section \ref{reptheory} in which we describe the representation theory of $\mathbb{H}_{DO}$ as blocks corresponding to multipartitions and representations of the graded Hecke algebra of type A. 

\begin{subsection}{The rational Cherednik algebra}\label{rationalcherednikdef}
Dunkl and Opdam \cite{DO03} introduced  the rational Cherednik algebra.
Let $G \subset GL(V)$ be a complex reflection group with reflections $\mathcal{S}$.  Let $\langle , \rangle$ be the natural pairing of $V$ and $V^*$. Let $\alpha_s \in V$  be a $\lambda$ eigenvector for $s \in \mathcal{S}$ and let $v_s\in V^*$ be a $\lambda^{-1}$ eigenvectors  for $s\in \mathcal{S}$ such that $\lambda \neq 1$, and $\langle \alpha_s ,v_s \rangle = 1$. For every reflection $s\in \mathcal{S}$ introduce the parameters $\underline{t}$, $\textbf{c}_s\in \mathbb{C}$  such that $\textbf{c}_s = \textbf{c}_{s'}$ if $s'$ and $s$ are in the same conjugacy class.
The rational Cherednik algebra is defined as the quotient of the associative $\mathbb{C}$ algebra 
$$T(V \oplus V^*) \ltimes \mathbb{C}[G]$$
by the relations
$$[x,x'] = [y,y'] = 0, \hspace{1cm} \text{ for all } x, x' \in V, y ,y' \in V^*,$$
$$[x,y] = \underline{t} \langle x ,y \rangle - \sum_{s\in \mathcal{S}} \textbf{c}_s \frac{\langle \alpha_s, y\rangle\langle  x , v_s\rangle}{\langle \alpha_s, v_s\rangle}s \hspace{1cm} \forall x \in V, y\in V^*,$$
$$g^{-1} v g = g(v), \hspace{1cm} \forall v \in V \oplus V^*.$$

If one restricts to rational Cherednik algebras associated to classical complex groups $G(m,p,n)$ then \cite{DO03} show there is a set of commuting operators inside the rational Cherednik algebra. The main part of these operators is quadratic in a special basis of $V$ and $V^*$.
We give a particular presentation of the rational Cherednik algebra associated to $G(m,1,n)$.

Define a generating set for $G(m,1,n)$ consisting of the reflections $\{s_{i,i+1}: i =1,...,n-1\}$ in $S_n$  and the reflections $\{g_i:i=1,...,n\}$ which have order $m$, we may write $s_i$ for $s_{i,i+1}$. Let $\eta$ be a primitive $m^{th}$ root of unity. Given $G(m,1,n)$ acting on $V$ let $x_i \in V$ be the vectors such that $w (x_i) = x_{w(i)}$  for $w \in S_n$ and 
$$g_i (x_j) = \begin{cases} \eta^{-1} x_j & \text{ if } i = j, \\ x_j & \text{ otherwise.}\end{cases}$$
Let $\{y_1,...,y_n\} \in V^*$ be the dual basis to $\{x_1,...,x_n\}$.
For $G = G(m,1,n)$ there are $m+1$ conjugacy classes of reflections, for reflections in the conjugacy class of $s_{1,2}$ then let $\textbf{k}\in \mathbb{C}$ denote their parameter. Similarly for reflection conjugate to $g_1^l$ denote the parameter by $\textbf{c}_l\in \mathbb{C}$.
\begin{definition} The rational Cherednik algebra for $G(m,1,n)$ and parameters $\textbf{k},\textbf{c}_l,\underline{t}$, $\mathcal{H}_t(G(m,1,n))$ is the quotient of the $\mathbb{C}$ algebra $T(V\oplus V^*) \ltimes \mathbb{C}[G(m,1,n)]$ by the relations
$$[x_i,x_j] = [y_i,y_j] = 0, $$
$$[x_i,y_i] = \underline{t}  - \textbf{k} \sum_{l=1}^{m-1}\sum_{i \neq j} s_{i,j} g_i^{-l}g_j^l - \sum_{l=1}^{m-1}\underline{c}_lg_i^l,$$
$$[x_i,y_j] =  \textbf{k} \sum_{l=1}^{m-1} s_{i,j} g_i^{-l}g_j^l,$$
$$g^{-1} v g = g(v), \text{ for all } v \in V \text{ or } V^*.$$

\end{definition}

\begin{subsection}{Dunkl-Opdam quadratic operators} 

\begin{definition} For $ i \leq n$ define elements $\mathcal{H}_t(G(m,1,n))$
$$z_i = y_ix_i+  \textbf{k} \sum_{l=1}^{m-1}\sum_{i > j} s_{i,j} g_i^{-l}g_j^l +\frac{1}{2} \sum_{l=1}^{m-1}\textbf{c}_l g_i^l +\frac{1}{2}\underline{t},$$
$$=  x_iy_i  -\textbf{k} \sum_{l=1}^{m-1}\sum_{i < j} s_{i,j} g_i^{-l}g_j^l-\frac{1}{2}\sum_{l=1}^{m-1}\textbf{c}_l g_i^l-\frac{1}{2}\underline{t}.$$
\end{definition}

These operators were defined in \cite{DO03} and also appeared in \cite{G10} for the specialization at $\underline{t} = 1$. Martino \cite{M14} used them for general $\underline{t}$ to study the blocks of the rational Cherednik algebra. 

\begin{definition} The Dunkl-Opdam subalgebra $\mathbb{H}_{DO}(G(m,1,n))$  of the rational Cherednik algebra is the subalgebra generated by $G(m,1,n)$ and $z_i$ for $i=1,...n$.\end{definition}

 \begin{remark} The following relations hold in $\mathbb{H}_{DO}(G(m,1,n)$\label{relationssymmetric}
$$[z_i,z_j] = 0 \hspace{1cm} \text{ for } i,j = 1,...,n,$$
$$[z_i,g_k] = 0, \hspace{1cm} \forall i,k = 1,...n,$$
$$[z_j,s_{i,i+1}] = 0 \hspace{1cm} \text{ for } j \neq i,i+1,$$
$$z_i s_{i,i+1} = s_{i, i+1} z_{i+1} -\textbf{k}\epsilon_{ij}.$$
Here $\epsilon_{ij} = \sum_{l=1}^{m-1} g_i^l g_j^{-l}.$\end{remark}

In fact $\mathbb{H}_{DO}(G(m,1,n))$ is isomorphic to the $\mathbb{C}[\textbf{k}]$ associative algebra generated by $z_i$ and $G(m,1,n)$ subject to the relations stated in Remark \ref{relationssymmetric}.
\end{subsection}

\end{subsection}
\begin{subsection}{Dunkl-Opdam subalgebra admits a non-faithful Drinfeld presentation}
In this section we derive a new presentation of $\mathbb{H}_{DO}$ which demonstrates that $\mathbb{H}_{DO}$ is a non-faithful Drinfeld algebra. Thus simultaneously showing that one can associate a Drinfeld algebra to $G(m,1,n)$, also giving a natural example of a non-faithful Drinfeld algebra.

We introduce Jucys-Murphy elements for $G(m,1,n)$. These are well known, however the tool that we use here is that we consider two different sets of Jucys-Murphy elements. 

\begin{definition} We define Jucys-Murphy elements for $G(m,1,n)$. 
$$M_i = \sum_{k<i} \sum_{s=0}^{m-1} s_{k,i} g_k^{-s}g_i^s,$$
$$\underline{M_i} = \sum_{k>i} \sum_{s=0}^{m-1} s_{i,k} g_i^{-s}g_k^s.$$
\end{definition}The commutator $[M_i,M_j] = 0 = [\underline{M_i},\underline{M}_j]$ by a standard argument using the fact that $\sum_{j\leq i}M_i$ is in the centralizer of the subgroup generated by $\{s_{k-1,k}, g_k : k \leq i\}$.  It should be noted that $[M_i, \underline{M_j}] \neq 0$ for $i >j$.
Furthermore $$s_i M_i = M_{i+1}s_i -  \sum_{s=0}^{m-1} g_i^{-s}g_{i+1}^s$$ and 
$$s_i \underline{M}_i = \underline{M}_{i+1}s_i +  \sum_{s=0}^{m-1} g_i^{-s}g_{i+1}^s.$$

If we adjust $z_i$ by $-\textbf{k}M_i$ or $\textbf{k}\underline{M}_i$,  $\hat{z}_i = z_i -\textbf{k} M_i$ ($\underline{\hat{z}}_i = z_i +\textbf{k} \underline{M_i}$ respectively)  the elements $\hat{z}_i$ satisfy the relations $s_i \hat{z}_i s_i = \hat{z}_{i+1}$ and $g_j\hat{z}_ig_j^{-1} = \hat{z}_i$. Hence we obtain an action of $G(m,1,n)$ on the set $\hat{z}_i$. However the set $\{\hat{z}_i\}$ no longer commutes. This presentation was given in \cite[Corollary 3.6]{DO03} where the symbol $T_ix_i$ denotes $\hat{z}_i$. We provide an exposition of the presentation with $\{\underline{\hat{z}}_i\}$ using a simple automorphism of $\mathbb{H}_{DO}$.

\begin{corollary}\cite[Corollary 3.6]{DO03}\label{2ndpres} Let $\hat{z}_i = z_i -\textbf{k} M_i$ then $\hat{z}_i$ and $G$ generate $\mathbb{H}_{DO}$ and the following relations define $\mathbb{H}_{DO}(G(m,1,n)$:
$$s_j\hat{z}_i = \hat{z}_{s_j(i)}s_j,$$
$$[g_i,\hat{z}_j] = 0,$$
$$[\hat{z}_i , \hat{z}_j] = \textbf{k}(\hat{z}_i - \hat{z}_j) \sum_{s=0}^{m-1} s_{s_{i,j}}g_i^{-s}g_j^s.$$
\end{corollary}

\begin{lemma} Let $\Phi : \mathbb{H}_{DO} \to \mathbb{H}_{DO}$ such that
$$\Phi(z_i) = -z_{n+1-i},$$
$$\Phi(s_i) = s_{n-i},$$
$$\Phi (g_i) = g_{n+1-i}.$$
The map $\Phi$ is an automorphism of $\mathbb{H}_{DO}$. 
Furthermore $\Phi (M_i) = \underline{M}_{n+1-i}$.  \end{lemma}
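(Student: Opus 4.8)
The plan is to work from the presentation of $\mathbb{H}_{DO}(G(m,1,n))$ by the generators $z_1,\dots,z_n$ together with $G(m,1,n)$, subject to the relations of Lemma~\ref{relationssymmetric}, which is stated there to be a complete presentation. The first observation is that $\Phi$, restricted to $\mathbb{C}[G(m,1,n)]$, is precisely conjugation by the longest element $w_0\in S_n$, i.e.\ the order-reversing permutation $j\mapsto n+1-j$: indeed $w_0 s_i w_0^{-1}=s_{n-i}$, and $w_0 g_i w_0^{-1}=g_{n+1-i}$ because $w_0$ permutes the coordinates of $(\mathbb{Z}/m\mathbb{Z})^n$. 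Hence $\Phi$ is automatically an algebra automorphism of $\mathbb{C}[G(m,1,n)]$, and $\Phi(s_{(k,l)})=s_{(n+1-k,\,n+1-l)}$ for every transposition. One also checks at once that $\Phi^2$ is the identity on all the listed generators, so as soon as $\Phi$ is known to be a well-defined homomorphism it is automatically bijective.

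It therefore remains to verify that $\Phi$ preserves the relations of Lemma~\ref{relationssymmetric}. The relations $[z_i,z_j]=0$, $[z_i,g_k]=0$ and $[z_j,(i,i+1)]=0$ for $j\neq i,i+1$ are immediate: $\Phi$ sends $z_a$ to $-z_{n+1-a}$ and acts on $G(m,1,n)$ by the index flip, which carries the condition $j\neq i,i+1$ to $n+1-j\neq n-i,\,n+1-i$. The only relation needing attention is the twisted one, $z_i (i,i+1) = (i,i+1) z_{i+1} - \kappa\,\epsilon_{i,i+1}$. Applying $\Phi$ produces $-z_{n+1-i}s_{n-i} = -s_{n-i} z_{n-i} - \kappa\,\epsilon_{n+1-i,\,n-i}$; I would show this is a consequence of the original relation with $i$ replaced by $n-i$, namely $z_{n-i}s_{n-i}=s_{n-i}z_{n+1-i}-\kappa\,\epsilon_{n-i,\,n+1-i}$, by conjugating the latter by $s_{n-i}$ and using that $s_k\,\epsilon_{k,k+1}\,s_k=\epsilon_{k+1,k}=\epsilon_{k,k+1}$ (the element $\epsilon_{ij}=\sum_{l=1}^{m-1}g_i^l g_j^{-l}$ is symmetric in $i,j$ after the substitution $l\mapsto m-l$). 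This shows $\Phi$ extends to a $\mathbb{C}[\kappa]$-algebra endomorphism of $\mathbb{H}_{DO}$, and $\Phi^2=\mathrm{id}$ upgrades it to an automorphism.

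For the final claim I would simply compute, using $\Phi(s_{(k,i)})=s_{(n+1-k,\,n+1-i)}$ and $\Phi(g_k)=g_{n+1-k}$, that $\Phi(M_i)=\sum_{k<i}\sum_{s=0}^{m-1} s_{(n+1-k,\,n+1-i)}\,g_{n+1-k}^{-s}g_{n+1-i}^{s}$. Setting $j=n+1-i$ and $l=n+1-k$, the range $k\in\{1,\dots,i-1\}$ becomes $l\in\{j+1,\dots,n\}$, so $\Phi(M_i)=\sum_{l>j}\sum_{s=0}^{m-1}s_{(j,l)}\,g_l^{-s}g_j^{s}$; reindexing $s\mapsto m-s$ in each inner sum and using $g^m=1$ turns $g_l^{-s}g_j^{s}$ into $g_j^{-s}g_l^{s}$, so this equals $\underline{M}_j=\underline{M}_{n+1-i}$, as claimed.

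The only genuine friction I anticipate is bookkeeping: keeping track of the sign coming from $\Phi(z_i)=-z_{n+1-i}$ together with the index reversals, and noticing that the $\Phi$-image of the twisted relation is not literally one of the listed relations but follows from one of them by conjugating by a simple reflection. Everything else — the group-theoretic part and the $M_i$ computation — is formal.
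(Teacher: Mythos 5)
Your proof is correct and takes essentially the same route as the paper: verify that $\Phi$ annihilates the defining relations of Lemma \ref{relationssymmetric} (the only nontrivial one being the twisted relation $z_i s_i = s_i z_{i+1} - \kappa\epsilon_{i,i+1}$, whose $\Phi$-image you correctly recover from the relation at index $n-i$ conjugated by $s_{n-i}$, using the symmetry $\epsilon_{ij}=\epsilon_{ji}$), so that $\Phi$ descends to an endomorphism of $\mathbb{H}_{DO}$, and then conclude it is an automorphism. If anything, your write-up is more complete than the paper's: the observation $\Phi^2=\mathrm{id}$ gives bijectivity more cleanly than the paper's appeal to surjectivity alone, and the explicit reindexing $k\mapsto n+1-k$, $s\mapsto m-s$ proving $\Phi(M_i)=\underline{M}_{n+1-i}$ is only asserted, not computed, in the paper.
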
 
\begin{proof} 
$$\Phi (s_i z_i - z_{i+1}s_i -\textbf{k}\epsilon_{i,i+1}) = -s_{n-i}z_{n+1-i} + z_{n-i}s_{n-i} -\textbf{k}\epsilon{n-i,n+1-i}.$$ We formally define $\Phi$ as a map from $\mathbb{C}G \rtimes S(V) \to \mathbb{H}_{DO}$ then $\Phi$ takes the set of defining relations in $\mathbb{H}_{DO}$ to itself. $\Phi$ is surjective since it takes generators to generators. Hence we can define $\Phi$ as a automorphism on $\mathbb{H}_{DO}$. \end{proof}

Using $\Phi$ we define the presentation of $\mathbb{H}_{DO}$ with generators $\{\underline{\hat{z}}_i\}$.
\begin{lemma} \label{3rdpres} Let $\underline{\hat{z}}_i = z_i +\textbf{k} \underline{M_i}$, then the set $\underline{\hat{z}}_i$ and $G$ generate $\mathbb{H}_{DO}$. Further the following relations hold: 
$$[\underline{\hat{z}}_i , \underline{\hat{z}}_j] = -\textbf{k}(\underline{\hat{y}}_i - \underline{\hat{z}}_j) \sum_{s=0}^{m-1} s_{s_{i,j}}g_i^{-s}g_j^s,$$
$$s_i \underline{\hat{z}}_i = \underline{\hat{z}}_{i+1} s_i,$$
$$[s_i,\underline{\hat{z}}_j]=0, \forall j \neq i ,i+1,$$
$$[g_i,\underline{\hat{z}}_j] = 0.$$\end{lemma}
\begin{proof}

 $\Phi(\hat{z}_i) = \Phi(z_i -\textbf{k} M_i) =- z_{n+1-i} -\textbf{k} \underline{M}_{n+1-i} = - \underline{\hat{z}}_{n+1-i}.$
Hence $\underline{\hat{z}}_i$ and $G$ generate $\mathbb{H}_{DO}$ since they are images of a generating set under the automorphism $\Phi$. From the definition of $\Phi$,
$$[\underline{\hat{z}}_i,\underline{\hat{z}}_j] = \Phi([-\hat{z}_{n+1-i},-\hat{z}_{n_1-j}]) = \Phi ([\hat{z}_{n+1-i},\hat{z}_{n+1-j}])$$
$$= \Phi(\hat{z}_{n+1-i} - \hat{z}_{n+1-j}) \sum_{s=0}^{m-1} s_{(n+1-i,n+1-j)}g_{n+1-i}^{-s}g_{n+1-j}^s) = -(\underline{\hat{z}}_i -\underline{\hat{z}}_j)\sum_{s=0}^{m-1} s_{s_{i,j}}g_i^{-s}g_j^s).$$
Similarly
$$s_i\underline{\hat{z}}_i =- \Phi (s_{(n+1-i,n-i)}\hat{z}_{n+1-i}) = -\Phi (\hat{z}_{n-i} s_{(n+1-i,n-i)})$$
$$= \underline{\hat{z}_{i+1}} s_i.$$
The relations we gave are images of relations in the second presentation under $\Phi$. Furthermore, the generators and relations are exactly the images of the generators and relations of a presentation hence \ref{3rdpres} gives another presentation of $\mathbb{H}_{DO}$.\end{proof}

We will now work towards a fourth presentation of $\mathbb{H}_{DO}$. We are aiming for a Drinfeld presentation of $\mathbb{H}_{DO}$ which must have the commutator $[\hat{z}_i,\hat{z}_j]$ be an element of the group algebra. 
We now work through several Lemmas to prove that we can alter $z_i$ by $\frac{\textbf{k}}{2} (M_i -\underline{M}_i)$ to give the Drinfeld presentation we expect. 

We observe that the commutators of the set $\{\hat{z}_i\}$, and similarly $\{\underline{\hat{z}}_j\}$ can be expressed as commutators of the Jucys-Murphy elements and $z_i$.
\begin{lemma}\label{commutatorofzhat} The commutator of the operators $\hat{z}_i$ and $\underline{\hat{z}}_i$ are such that:
$$[\hat{z}_i,\hat{z}_j] = \textbf{k}([z_j,M_i]- [z_i,M_j]).$$
Similarly for $\underline{\hat{z}}_i$ and $\underline{M_i}$.
$$[\underline{\hat{z}}_i,\underline{\hat{z}}_j] = \textbf{k}([z_i, \underline{M}_j]-[z_j,\underline{M}_i]).$$

\end{lemma}

\begin{proof} 
$$[\hat{z}_i,\hat{z}_j] = [z_i -\textbf{k} M_i, z_i -\textbf{k} M_j ] = [z_i,z_j] -\textbf{k} [z_i,M_j] +\textbf{k} [z_j, M_i] +\textbf{k}^2 [M_i,M_j],$$
$$= \textbf{k}([z_j,M_i]- [z_i,M_j]),$$
since $[M_i,M_j] = [\underline{M}_i.\underline{M}_j] = [z_i,z_j] = 0.$

\end{proof}

\begin{lemma} For operators $\hat{z}_i$ and $\underline{\hat{z}_j}$, 
$$[\hat{z}_i,\hat{z}_j] + [\underline{\hat{z}}_i,\underline{\hat{z}}_j] \in \mathbb{C}G.$$
\end{lemma} 
\begin{proof} 
Using Corollary \ref{2ndpres} and Lemma \ref{3rdpres} we can expand the commutators:
$$[\hat{z}_i,\hat{z}_j] + [\underline{\hat{z}}_i,\underline{\hat{z}}_j]$$

$$= (\hat{z}_i - \hat{z}_j) \sum_{s=0}^{m-1} s_{i,j}g_i^{-s}g_j^s -(\underline{\hat{z}}_i - \underline{\hat{z}}_j) \sum_{s=0}^{m-1} s_{s_{i,j}}g_i^{-s}g_j^s.$$
Writing out $\hat{z}_i$ and $\underline{\hat{z}}_i$ in terms of the commuting operators $z_i$ one obtains
$$= (z_i - \textbf{k}M_i - z_j +\textbf{k} M_j) \sum_{s=0}^{m-1} s_{i,j}g_i^{-s}g_j^s -(z_i +\textbf{k}\underline{M_i} - z_j -\textbf{k}\underline{M}_j) \sum_{s=0}^{m-1} s_{s_{i,j}}g_i^{-s}g_j^s$$
Cancelling out the operators $z_i$ we arrive at the element of the group algebra 
$$ [\hat{z}_i,\hat{z}_j] + [\underline{\hat{z}}_i,\underline{\hat{z}}_j]= \textbf{k}(M_j + \underline{M}_j - M_i - \underline{M}_i)  \sum_{s=0}^{m-1} s_{i,j}g_i^{-s}g_j^s \in \mathbb{C}G.$$
\end{proof}

\begin{lemma}\label{commutant} The commutators of $z_i - \frac{\textbf{k}}{2} M_i + \frac{\textbf{k}}{2}\underline{M}_i$ are in $\mathbb{C}G$. $$[z_i - \frac{\textbf{k}}{2} M_i + \frac{\textbf{k}}{2}\underline{M}_i, z_j - \frac{\textbf{k}}{2}M_j + \frac{\textbf{k}}{2}\underline{M}_i] \in \mathbb{C}G.$$
\end{lemma}

\begin{proof}Expanding out the commutator linearly: $$[z_i - \frac{\textbf{k}}{2} M_i + \frac{\textbf{k}}{2}\underline{M}_i, z_j - \frac{\textbf{k}}{2}M_j + \frac{\textbf{k}}{2}\underline{M}_j]$$
$$= [z_i, z_j]  +\frac{\textbf{k}}{2} \left( [z_j,M_i] - [z_i,M_j]    + [z_i,\underline{M}_j] - [z_j,\underline{M}_i] \right) $$
$$+\left(\frac{\textbf{k}}{2}\right)^2 \left([M_i, M_j] - [M_i, \underline{M}_j] - [\underline{M}_i,M_j] +[\underline{M}_i,\underline{M}_j] \right)$$
Using Lemma \ref{commutatorofzhat}:
$$= \frac{1}{2} \left([\hat{z}_i,\hat{z}_j] + [\underline{\hat{z}}_i,\underline{\hat{z}}_j] \right) - \left(\frac{\textbf{k}}{2}\right)^2\left( [M_i, \underline{M}_j] +[\underline{M}_i,M_j] \right) \in \mathbb{C}G,$$
$$= \frac{\textbf{k}}{2} \left( M_j + \underline{M}_j - M_i - \underline{M}_i\right) \sum_{s=0}^{m-1} s_{s_{i,j}}g_i^{-s} g_j^s - \left(\frac{\textbf{k}}{2}\right)^2\left( [M_i, \underline{M}_j] +[\underline{M}_i,M_j] \right) \in \mathbb{C}G.$$
\end{proof}

\begin{lemma}\label{Gaction} $s_j(z_i - \frac{\textbf{k}}{2} M_i + \frac{\textbf{k}}{2}\underline{M}_i)s_j^{-1} = (z_{s_j(i)} - \frac{\textbf{k}}{2} M_{s_j(i)} + \frac{\textbf{k}}{2}\underline{M}_{s_j(i)})$. \end{lemma}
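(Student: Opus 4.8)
The plan is to verify the identity by checking the $S_n$-conjugation behaviour of each of the three ingredients $z_i$, $M_i$, and $\underline{M}_i$ separately, since the claimed equation is linear in these. For the generator $z_i$, the relations collected in Lemma \ref{relationssymmetric} already tell us that $[z_j,(i,i+1)]=0$ for $j\neq i,i+1$ and that $z_i(i,i+1)=(i,i+1)z_{i+1}-\kappa\epsilon_{ij}$; but this last relation is exactly the statement that $z_i$ does \emph{not} transform cleanly, and the point of adding the Jucys--Murphy corrections is to absorb the $\epsilon$-term. So I would not treat $z_i$ in isolation; rather I would combine it with $M_i$ and $\underline M_i$ at once.

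The key computation is the conjugation of the Jucys--Murphy elements by $s_j$. For $M_i$ one has the standard braid-type relations: $s_j M_i s_j^{-1} = M_{s_j(i)}$ whenever $j\neq i-1,i$ (i.e. $s_j$ moves neither $i$ nor the ``current largest'' index interacting with it in an innocuous way), while for the adjacent case the excerpt records $s_iM_i = M_{i+1}s_i - \sum_{s=0}^{m-1}g_i^{-s}g_{i+1}^s$, equivalently $s_iM_is_i = M_{i+1}s_i^2 - (\sum_s g_i^{-s}g_{i+1}^s)s_i = M_{i+1} - s_i\epsilon'$ for the appropriate group-algebra element; and symmetrically $s_i\underline M_i = \underline M_{i+1}s_i + \sum_{s=0}^{m-1}g_i^{-s}g_{i+1}^s$. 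So first I would dispose of the easy case $j\neq i-1,i$ where each of $z_i$, $M_i$, $\underline M_i$ (and their would-be images $z_{s_j(i)}$ etc.) conjugates to the expected index without correction term, and the identity is immediate. The substance is the case $j=i$ (the case $j=i-1$ being handled by relabelling). There one computes $s_iz_is_i = z_{i+1} - \kappa\epsilon_{i,i+1}s_i^{-1}\cdot s_i$ — more precisely one rewrites $z_i(i,i+1)=(i,i+1)z_{i+1}-\kappa\epsilon$ as $s_iz_is_i^{-1} = z_{i+1} - \kappa\epsilon s_i^{-1}$ — and then adds $-\tfrac{\mathbf k}{2}$ times the $M_i$-computation and $+\tfrac{\mathbf k}{2}$ times the $\underline M_i$-computation. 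The claim is that the three correction terms cancel: the half from $M_i$ and the half from $\underline M_i$ contribute $\pm\tfrac{\mathbf k}{2}\sum_{s}g_i^{-s}g_{i+1}^s$ with opposite signs after moving $s_i$ to the correct side, and together they exactly cancel the $\kappa\epsilon_{i,i+1}$ coming from $z_i$ (using $\mathbf k=\kappa$ and that $\epsilon_{i,i+1}=\sum_{s=1}^{m-1}g_i^sg_{i+1}^{-s}$ is, up to the $s=0$ term and a reindexing $s\mapsto m-s$, the same group-algebra element appearing in the $M$-relations).

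Concretely, I would write everything with the convention $s_j a s_j^{-1}$ (so both sides of the target identity are ``honest'' conjugations), expand the left side as
$$s_i z_i s_i^{-1} - \tfrac{\mathbf k}{2}\, s_i M_i s_i^{-1} + \tfrac{\mathbf k}{2}\, s_i \underline M_i s_i^{-1},$$
substitute the three single-generator identities, and collect the group-algebra remainder. The bookkeeping to watch is the precise form of the summand: $M_i$ and $\underline M_i$ produce $\sum_{s=0}^{m-1}g_i^{-s}g_{i+1}^s$ (with the $s=0$ term being just the identity of the group), whereas the $z_i$-relation produces $\kappa\epsilon_{i,i+1}=\kappa\sum_{s=1}^{m-1}g_i^sg_{i+1}^{-s}$; conjugating by $s_i$ and reindexing $s\mapsto m-s$ turns one into the other, and the $s=0$ discrepancy is absorbed because it appears with cancelling sign from the $+\tfrac{\mathbf k}{2}\underline M_i$ and $-\tfrac{\mathbf k}{2}M_i$ terms.

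I expect the main obstacle to be purely notational: keeping consistent conventions for $\epsilon_{ij}$ versus $\sum_{s}g_i^{-s}g_{i+1}^s$, for left versus right multiplication by $s_i$, and for the sign of the $\underline M$-relation relative to the $M$-relation. There is no conceptual difficulty — the lemma is designed so that the corrections match — but the proof will consist of carefully matching three group-algebra expressions, and the risk is an off-by-one in the root-of-unity exponent or a dropped $s_i$ on the wrong side. Once the $j=i$ case is done, the general case $j\neq i,i-1$ is trivial and the full statement follows by the symmetry $i\leftrightarrow i+1$ built into $s_i$.
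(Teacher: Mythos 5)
Your overall strategy coincides with the paper's: its proof just compiles the three conjugation relations for $z_i$, $M_i$ and $\underline{M}_i$ under $s_j$ and notes that the group-algebra corrections cancel, which is exactly your plan (your split into the trivial case $j\neq i-1,i$ and the adjacent case is fine).

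However, one concrete bookkeeping step as you describe it would fail, and it is exactly the point you flagged as risky. The corrections produced by $-\tfrac{\kappa}{2}M_i$ and $+\tfrac{\kappa}{2}\underline{M}_i$ do not cancel each other, at $s=0$ or anywhere else: the $M_i$-relation carries a minus sign and the $\underline{M}_i$-relation a plus sign, so after multiplying by the coefficients $-\tfrac{\kappa}{2}$ and $+\tfrac{\kappa}{2}$ the two contributions have the \emph{same} sign and add up to $\kappa\sum_{s=0}^{m-1}g_i^{-s}g_{i+1}^{s}$ (with an $s_i$ on the appropriate side). This total must be cancelled entirely by the correction $\kappa\epsilon_{i,i+1}$ coming from the $z_i$-relation, and that works only if $\epsilon_{i,i+1}$ is the full sum over $s=0,\dots,m-1$: the lower limit $l=1$ in the statement of Lemma \ref{relationssymmetric} is an internal inconsistency of the paper (compare $\tilde{c}(\epsilon_{i+1}-\epsilon_i)=\sum_{l=0}^{m-1}g_i^{l}g_{i+1}^{-l}$ and the relation $[y_i,y_j]=\kappa(y_i-y_j)\sum_{s=0}^{m-1}s_{(i,j)}g_i^{-s}g_j^{s}$; indeed $y_i=z_i-\kappa M_i$ can satisfy $s_iy_i=y_{i+1}s_i$ with no correction at all only if the $z$-relation's group-algebra term includes the $s=0$ summand). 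So there is no ``$s=0$ discrepancy'' to be absorbed by a sign cancellation between the two Jucys--Murphy terms; if you run your computation with the paper's literal $\epsilon_{ij}$ you are left with an uncancelled term $\kappa s_i$, and the correct fix is to use the consistent convention for the sum, not the cancellation mechanism you propose. With that repaired, your argument goes through and is the same as the paper's.
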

\begin{proof} The result follows from compiling these three relations: 
$$s_i z_i = z_{i+1}s_i + \textbf{k}\epsilon_{i,i+1},$$
$$ s_i M_i = M_{i+1}s_i + \epsilon_{i,i+1},$$
$$s_i \underline{M}_i  = \underline{M}_{i+1}s_i -\epsilon_{i,i+1}.$$

\end{proof}

\begin{theorem}\label{Drinfeldpresentation} There exists a  presentation of $\mathbb{H}_{DO}$ given by elements $\{\tilde{z}_i: i=1,...,n\}$ and elements in $G$ such that:
$$s_i \tilde{z}_j s_i^{-1} = s_i(\tilde{z}_j) ,$$
$$g_i \tilde{z}_j = \tilde{z}_j g_i \hspace{1cm} \forall i,j = 1,...,n,$$
$$ [\tilde{z}_i,\tilde{z}_j] \in \mathbb{C}G.$$
\end{theorem}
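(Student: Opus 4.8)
The plan is to take as new generators
$$\tilde{z}_i := z_i - \tfrac{\textbf{k}}{2} M_i + \tfrac{\textbf{k}}{2}\underline{M}_i, \qquad i = 1,\dots,n,$$
together with the group $G = G(m,1,n)$, and to show that these present $\mathbb{H}_{DO}$ by the three displayed relations. The first observation is that $M_i,\underline{M}_i\in\mathbb{C}G$, so $G$ and the $\tilde z_i$ generate the same subalgebra of the rational Cherednik algebra as $G$ and the $z_i$, namely $\mathbb{H}_{DO}$; moreover $z_i = \tilde z_i + \tfrac{\textbf{k}}{2}M_i - \tfrac{\textbf{k}}{2}\underline{M}_i$, so passing between the two generating sets only changes each degree-one generator by a degree-zero group-algebra element.

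Next I would verify that the three relations hold in $\mathbb{H}_{DO}$. The relation $s_i\tilde z_j s_i^{-1}=s_i(\tilde z_j)$, read as $w\tilde z_j w^{-1}=\tilde z_{w(j)}$ for $w$ in the symmetric group part of $G$, is exactly Lemma~\ref{Gaction} for a simple reflection, extended to all of $S_n$ by composition. The relation $[\tilde z_i,\tilde z_j]\in\mathbb{C}G$ is Lemma~\ref{commutant}, which in fact records the precise element of $\mathbb{C}G$ that occurs, so that this explicit commutator may be taken as part of the presentation. For $g_i\tilde z_j=\tilde z_jg_i$ it is enough to know that $g_i$ commutes with each of $z_j$, $M_j$ and $\underline{M}_j$: commutation with $z_j$ is part of Lemma~\ref{relationssymmetric}, and then $[g_i,M_j]=0$ (resp. $[g_i,\underline{M}_j]=0$) follows from the relation $[g_i,y_j]=0$ of the Dunkl--Opdam presentation (resp. $[g_i,\underline{y}_j]=0$ of the third presentation of $\mathbb{H}_{DO}$ given above) combined with $y_j=z_j-\textbf{k}M_j$ (resp. $\underline{y}_j=z_j+\textbf{k}\underline{M}_j$).

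Finally I would argue that these relations, together with the relations of $G$ and the explicit value of $[\tilde z_i,\tilde z_j]$, form a complete presentation rather than those of a proper quotient. Let $\mathbb{A}$ be the $\mathbb{C}[\textbf{k}]$-algebra abstractly presented by $G$ and symbols $\tilde z_i$ subject to these relations; by the previous paragraph there is a surjection $\mathbb{A}\twoheadrightarrow\mathbb{H}_{DO}$. Relations $1$ and $2$ let one move every group element to the right past the $\tilde z_i$, and $[\tilde z_i,\tilde z_j]\in\mathbb{C}G$ (for this spanning argument the mere membership in $\mathbb{C}G$ is all that is used) lets one reorder the $\tilde z_i$ at the cost of lower-order terms, so $\mathbb{A}$ is spanned by the monomials $\{\tilde z_1^{a_1}\cdots\tilde z_n^{a_n}\,g : a_i\in\mathbb{N},\,g\in G\}$. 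On the other hand the presentation of $\mathbb{H}_{DO}$ recorded after Lemma~\ref{relationssymmetric} shows that the corresponding monomials in the $z_i$ form a basis of $\mathbb{H}_{DO}$, and the triangular change of variables $z_i\leftrightarrow\tilde z_i$ then shows $\{\tilde z_1^{a_1}\cdots\tilde z_n^{a_n}g\}$ is also a basis; comparing the two statements with respect to the filtration placing $G$ in degree $0$ and each $\tilde z_i$ in degree $1$ forces $\mathbb{A}\twoheadrightarrow\mathbb{H}_{DO}$ to be an isomorphism, and incidentally exhibits $\mathbb{H}_{DO}$ with associated graded $\mathbb{C}G\,\#\,S(V)$. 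The substantive input is the commutator computation of Lemma~\ref{commutant}, which rests on the two-sided Jucys--Murphy calculus; beyond that I expect only the routine bookkeeping of this last filtered-algebra argument, which is the one step where some care is needed.
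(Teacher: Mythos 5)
Your proposal is correct and follows essentially the same route as the paper: you take the same generators $\tilde{z}_i = z_i - \tfrac{\textbf{k}}{2}M_i + \tfrac{\textbf{k}}{2}\underline{M}_i$ and invoke Lemma \ref{Gaction} for the group action and Lemma \ref{commutant} for the commutator, exactly as the paper does. The only divergence is in the final completeness step, where the paper simply inverts the change of variables $z_i = \tilde{z}_i + \tfrac{\textbf{k}}{2}(M_i - \underline{M}_i)$ to recover the original relations, whereas you give a slightly more explicit filtered spanning-set/PBW-basis comparison (and you also spell out the $g_i$-commutation, which the paper leaves implicit); both are routine and compatible.
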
 
\begin{proof} Let $\tilde{z}_i = \frac{1}{2}( \hat{z}_i + \underline{\hat{z}}_j) = z_i-\frac{\textbf{k}}{2}(M_i - \underline{M}_i)$ then the first two relations follows from Lemma \ref{Gaction} and by Lemma \ref{commutant} their commutant is in $\mathbb{C}G$. 
One may be worried that we have defined an algebra that surjects onto $\mathbb{H}_{DO}$ but does not procure an injection. However performing the above arguments in reverse setting $z_i = \tilde{z}_i +\frac{\textbf{k}}{2}(M_i - \underline{M}_i)$ shows that the original relations follow from these relations. \end{proof}
\begin{definition}\label{skewformsdef}
Give $V$ a basis $\{v_i\}$ and recall that $S_n$ act on this basis by permutations.
Let $\theta$ be the homomorphism of $G(m,1,n)$ onto $S_n$.
$(V,\phi)$ is the standard representation of $S_n$, now define $(V,\rho)$ to be the representation of $G$ via the projection onto $S_n$, that is, 
$\rho: G \to GL(V)$ via $\rho(g) =  \phi(\theta(g)).$  We define skew-symmetric forms on $V$ for elements in $G(m,1,n)$:
$$b_{s_{ij}s_{jk}}(v_p,v_q) =\underline{k}^2\left(\langle \epsilon_i-\epsilon_j,\epsilon_p\rangle \langle \epsilon_j-\epsilon_k,\epsilon_q\rangle- \langle \epsilon_i-\epsilon_j,\epsilon_q\rangle \langle \epsilon_j-\epsilon_k,\epsilon_p\rangle\right),\text{ for } 0<i<j<k\leq n,$$
$$b_{s_{ij}g_i^lg_{j}^{-l}s_{jk}g_j^{l'}lg_{k}^{-l'}} = b_{s_{ij}s_{jk}} \text{ for all }l,l' = 0,...,m-1$$
$$b_g = 0 \text{ otherwise.}$$
\end{definition}
\begin{theorem} The algebra $\mathbb{H}_{DO}$ is a Drinfeld algebra. More concretely $\mathbb{H}_{DO}$ is isomorphic to $\mathbb{C}G \rtimes T(V)$ with the relations: 
$$[u,v] = \sum_{g\in G} b_g(u,v) g \hspace{1cm} \forall u,v \in V,$$
where $b_g$ are skew-symmetric forms on $V$ defined in Definition \ref{skewformsdef}.
\end{theorem}
\begin{proof}Conjugating $b_g$ by $g_i$ must fix $b_g$ since $g_i$ acts trivially on $V$. Quotienting  $\mathbb{H}_{DO}(G(m,1,n)$ by $g_i -1$ gives a quotient isomorphic to the graded Hecke algebra of type A $\mathbb{H}(S_n)$. Hence the forms $b_g$ must agree with, under the quotient the forms that construct the Drinfeld presentation of the graded Hecke algebra for $S_n$. The forms $b_{s_{ij}s_{jk}}$ descend to the forms, labelled the same element, defining the graded Hecke algebra as a Drinfeld algebra. Conjugating by various $g_i^l$ gives the forms $b_{s_{ij}g_i^lg_{j}^{-l}s_{jk}g_j^{l'}lg_{k}^{-l'}}$ above. Since $b_1 = 0$ in $\mathbb{H}(S_n)$ then $b_k = 0$ for all $k \in \ker \rho$. There are no other elements of $G(m,1,n)$ such that $\dim V^\rho(g) = \dim V - 2$, therefore the rest of the $b_g =0$ for all $g$ not mentioned above. \end{proof}

\end{subsection}
\begin{subsection}{Dunkl-Opdam subalgebra is a generalised graded Hecke algebra}

Recall Definition \ref{GGHdef} of the generalised graded Hecke algebra associated to the root system of $W$, $T$ and parameter function $\tilde{c}$.
Let $\epsilon_{ij} = \sum_{l=1}^{m-1} g_i^l g_j^{-l}\in \mathbb{C}G(m,1,n).$
The algebra $\mathbb{H}_{DO}$ is isomorphic to, as a vector space, $\mathbb{C}[G(m,1,n)] \otimes S(V)$, with multiplication such that $\mathbb{C}[G(m,1,n)]$ and $S(V)$ are subalgebras and the following cross relations hold;
$$[z_i,g_k] = 0, \hspace{1cm} \forall i,k = 1,...n,$$
$$[z_j,s_{i,i+1}] = 0 \hspace{1cm} \text{ for } j \neq i,i+1,$$
$$z_i s_{i,i+1} = (i, i+1) z_{i+1} - \textbf{k}\epsilon_{ij}.$$

If we substitute $G(m,1,n)\cong S_n \rtimes (\mathbb{Z}/m\mathbb{Z})^n$ for $W\rtimes T$ then $\mathbb{H}_{DO}$ is a generalised Hecke algebra with parameter function
$$\tilde{c}(s_{ij}) =\underline{ \textbf{k}}\epsilon_{ij}\in \mathbb{C}[(\mathbb{Z}_m)^n].$$

Since $\mathbb{H}_{DO}$ is a generalised graded Hecke algebras we can apply the Langlands classification from Section \ref{Langlandssection}. Therefore we can construct every representation of $\mathbb{H}_{DO}$ as a quotient of the module inducted from a tempered module of a parabolic subalgebra. 
\begin{corollary} Let $U= \check{U} \otimes \mathbb{C}_\nu$ be such that $\check{U}$ is a tempered $\mathbb{H}_{DO}$ module and $\nu$ is a character of $\mathfrak{a}^{*^+}$. Every irreducible representation of $\mathbb{H}_{DO}$ can be constructed as a quotient of a tempered module of a parabolic subalgebra $\mathbb{H}_P$. That is it is a quotient of 
$$\mathbb{H}_{DO}\otimes_{ \mathbb{H}_P}U.$$

\end{corollary}

\end{subsection}

\begin{section}{Constructing the representations of $\mathbb{H}(G(m,1,n))$ from $\mathbb{H}(S_n)$.}\label{reptheory}
In this section we prove the representations of $\mathbb{H}(G(m,1,n))$ can be built up from blocks of irreducible representations of the graded Hecke algebras associated to the symmetric group. This is very similar to how one can build the representations of $W(B_n)$ from the pullback of two representations of symmetric groups, $S_a$ and $S_b$, where $a+b = n$

We denote the usual graded Hecke algebra of type $A_{k-1}$ by $\mathbb{H}(S_k)$, $\eta$ denotes a fixed primitive $m^{th}$ root of unity.  

We define $\mathbb{N}$ to include zero. Let $A \subset \mathbb{N}^m$ be the set of vectors such that the coordinates sum to $n$. Then let $\underline{a} = (a_0,...,a_{m-1})$ be a vector in $A$, explicitly $\sum_{i=0}^{m-1} a_i = n$.  We define the character $\mu_{\underline{a}} \in \mathbb{C}[T]^*$ by 
$$\mu_{\underline{a}} (g_j) = \eta^i$$ 
where $\sum_{k=0}^{i-1}a_{k}<j \leq \sum_{k=0}^{i}a_{k}.$

This character takes the first $a_0$ reflections to $1$ it then takes the following $a_1$ reflections to $\eta$ then the following $a_2$ to $\eta^2$ and continues in this way. Finally it takes the last $a_{m-1}$ reflections to $\eta^{m-1}$. 
The set $A$ will become a parametrising set.

\begin{example} Let $n = 5$ and $m=3$, define $\omega$ to be a primitive $3rd$ root of unity. The character of $\mathbb{C}[(\mathbb{Z}/3)^5]$ associated to the vector $(1,1,3)$ is such that;
$$\mu_{(1,1,3)} (g_1) = 1, \hspace{1cm} \mu_{(1,1,3)} (g_2) = \omega,$$
$$\mu_{(1,1,3)} (g_3) = \mu_{(1,1,2)} (g_4) = \mu_{(1,1,3)} (g_5)=\omega^2.$$
\end{example}

 If we take the $S_n$ orbits of $\mathbb{C}[T]^*$ then a representative set of these orbits is $$\{\mu_{\underline{a}}| \underline{a} \in A \}.$$

Let $\irr(\mathbb{H}(G(m,1,n))$ be the set of isomorphism classes of irreducible modules for $\mathbb{H}(G(m,1,n))$. We define $\irr(\mathbb{H}(G(m,1,n)) | \mu_{\underline{a}})$ to be the subset of $\irr(\mathbb{H}(G(m,1,n))$ consisting of representations that have a weight $\mu_{\underline{a}}\otimes \lambda$ for any $\lambda \in S(V)^*$. Similarly, we will denote the set of irreducible representations of a complex algebra $B$ by $\irr (B)$.

\begin{lemma} The irreducible representations of $\mathbb{H}(G(m,1,n))$ split into disjoint sets labelled by $A$,
$$\irr(\mathbb{H}(G(m,1,n))) = \bigsqcup_{\underline{a}\in A} \irr(\mathbb{H}(G(m,1,n))| \mu_{\underline{a}}).$$
\end{lemma}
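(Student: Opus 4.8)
The plan is to decompose $Irr(\mathbb{H}(G(m,1,n)))$ according to the central character of the commutative subalgebra $\mathbb{C}[T]$, exploiting the fact that $T = (\mathbb{Z}/m\mathbb{Z})^n$ is abelian and commutes with all of $S(V)$, while $S_n$ permutes the characters of $\mathbb{C}[T]$. First I would observe that since the Dunkl–Opdam relations show $[z_i, g_k] = 0$, the subalgebra $\mathbb{C}[T] \otimes S(V) = \mathscr{A}$ is commutative and central with respect to the $z_i$'s; in particular, for any irreducible $\mathbb{H}(G(m,1,n))$-module $V$, the weights (in the sense of the generalised eigenspace decomposition defined earlier in the excerpt) are of the form $\mu \otimes \lambda$ with $\mu \in \mathbb{C}[T]^*$. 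Since $\mathbb{C}[T]$ is semisimple, $\mathbb{C}[T]$ acts by an honest character (not merely nilpotently up to $\mu$) on each weight space.

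Next I would invoke Lemma~\ref{orbit}-style reasoning, or rather its analogue for $\mathbb{C}[T]$: for a fixed irreducible $V$, the subspace $\bigoplus_{w \in S_n} V_{w(\mu) \otimes w(\lambda)}$ is an $\mathbb{H}(G(m,1,n))$-submodule, because the only generators not acting by eigenvalue-plus-nilpotent on $\mathscr{A}$-weight spaces are the simple reflections $s_i$, and the relation $z_i(i,i+1) = (i,i+1)z_{i+1} - \kappa\epsilon_{ij}$ together with $s_j g_k s_j^{-1} = g_{s_j(k)}$ shows that $s_i$ carries the $\mu \otimes \lambda$ weight space into a sum of weight spaces whose $\mathbb{C}[T]$-character lies in the $S_n$-orbit of $\mu$. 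Hence by irreducibility all $\mathbb{C}[T]$-characters occurring in $V$ lie in a single $S_n$-orbit on $\mathbb{C}[T]^*$. Since $\{\mu_{\underline{a}} : \underline{a} \in A\}$ is a complete set of orbit representatives for the $S_n$-action on $\mathbb{C}[T]^*$ (each $\mu \in \mathbb{C}[T]^*$ is determined up to $S_n$ by the multiset of values $\eta^i$ it takes, i.e.\ by the vector $\underline{a}$ recording how many $g_j$ map to each power of $\eta$), every irreducible $V$ determines a unique $\underline{a} \in A$, namely the one with $\mu_{\underline{a}}$ in the common orbit of the $\mathbb{C}[T]$-weights of $V$.

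This gives the assignment $Irr(\mathbb{H}(G(m,1,n))) \to A$, and $Irr(\mathbb{H}(G(m,1,n)) \mid \mu_{\underline{a}})$ is precisely the fibre over $\underline{a}$: if $V \in Irr(\mathbb{H}(G(m,1,n)) \mid \mu_{\underline{b}})$ then $\mu_{\underline{b}}$ occurs as a $\mathbb{C}[T]$-weight, so $\mu_{\underline{b}}$ lies in the orbit attached to $V$, forcing $\underline{b} = \underline{a}$; conversely each $V$ has at least one $\mathbb{C}[T]$-weight, necessarily $S_n$-conjugate to some $\mu_{\underline{a}}$, and since $V$ is a $G(m,1,n)$-module the full $S_n$-orbit of that weight appears, in particular $\mu_{\underline{a}}$ itself, so $V \in Irr(\mathbb{H}(G(m,1,n)) \mid \mu_{\underline{a}})$. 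Thus the sets $Irr(\mathbb{H}(G(m,1,n)) \mid \mu_{\underline{a}})$ are pairwise disjoint and exhaust $Irr(\mathbb{H}(G(m,1,n)))$, which is the claim. The main obstacle, and the only point requiring genuine care rather than bookkeeping, is the submodule argument showing that the $\mathbb{C}[T]$-characters of an irreducible module form a single $S_n$-orbit: one must check that the correction term $\kappa\epsilon_{ij} \in \mathbb{C}[T]$ appearing in $z_i(i,i+1) = (i,i+1)z_{i+1} - \kappa\epsilon_{ij}$ does not mix in characters outside the orbit, which follows since $\mathbb{C}[T]$ preserves each $\mathbb{C}[T]$-eigenspace and $s_i$ conjugates the $\mathbb{C}[T]$-character $\mu$ to $s_i(\mu)$, still in the orbit.
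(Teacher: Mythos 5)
Your argument is correct and follows essentially the same route as the paper: the paper's proof simply notes that every irreducible module has a $\mathbb{C}[T]$-weight and invokes Lemma~\ref{orbit} to conclude the weights form a single $S_n$-orbit, hence contain exactly one $\mu_{\underline{a}}$. You merely re-derive the orbit lemma inline (via the submodule $\bigoplus_{w}V_{w(\mu)\otimes w(\lambda)}$) and spell out the bookkeeping that $\{\mu_{\underline{a}}\}$ are orbit representatives and that $\mu_{\underline{a}}$ itself occurs, which the paper leaves implicit.
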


\begin{proof} Since every irreducible representation of $\mathbb{H}(G(m,1,n))$ has at least one $\mathbb{C}[T]$ weight then by Lemma \ref{orbit} it must contain one and only one $S_n$ orbit, hence it must contain exactly one $\mu_{\underline{a}}$. Therefore every irreducible representation $V$ is in exactly one of the sets $\irr(\mathbb{H}(G(m,1,n))| \mu_{\underline{a}}).$
\end{proof}

Let $S_{a_0} \times S_{a_1} \times ... S_{a_{m-1}}$ be the parabolic subgroup of $S_n$ generated by $s_{\alpha}$ for 
$$\Pi_{\underline{a}}  = \{\epsilon_i - \epsilon_{i+1} |\sum_{k=0}^{j-1}a_{k} \leq i < \sum_{k=0}^{j}a_{k} \text{ for some } j \}\subset\Pi.$$
Fix $\underline{a} \in A$. The stabiliser, $\stab(\mu_{\underline{a}}) \subset \mathbb{H}(G(m,1,n))$, of the character $\mu_{\underline{a}}$ is generated by $\mathbb{C}[T]$, $S(V)$ and $s_i \in S_{a_0} \times S_{a_1} \times ... S_{a_{m-1}} \subset S_n$. 
$\Pi_{\underline{a}}$ is equivalent to the set of simple roots $\epsilon_i - \epsilon_{i+1}$ such that $\mu_{\underline{a}}(g_i) = \mu_{\underline{a}}(g_{i+1})$.
This is the parabolic subalgebra associated to the subset $\Pi_{\underline{a}} \subset \Pi$ defined in  Definition $\ref{parabolic}$.
\begin{lemma}\label{otimes} The subalgebra $\stab(\mu_{\underline{a}})$ which stabilises the character $\mu_{\underline{a}}$ is isomorphic to $\mathbb{H}(G(a_0,1,m)) \otimes\mathbb{H}(G(a_1,1,m)) \otimes ... \otimes\mathbb{H}(G(a_{m-1},1,m))$. \end{lemma}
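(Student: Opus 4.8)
The plan is to produce an explicit isomorphism by decomposing the index set $\{1,\dots,n\}$ into the consecutive blocks on which $\mu_{\underline a}$ is constant. Write $I_0 = \{1,\dots,a_0\}$, $I_1 = \{a_0+1,\dots,a_0+a_1\}$, and in general $I_j = \{\sum_{k<j}a_k+1,\dots,\sum_{k\le j}a_k\}$, so that $\mu_{\underline a}(g_i)=\eta^j$ exactly when $i\in I_j$. The parabolic subgroup $S_{\underline a} := S_{a_0}\times\cdots\times S_{a_{m-1}}$ is generated by the $s_i$ with $i,i+1$ in a common block $I_j$, and $stab(\mu_{\underline a})$ is the subalgebra generated by $S_{\underline a}$, by $\mathbb{C}[T]$, and by $S(V)$, i.e.\ by Definition \ref{parabolic} it is the parabolic subalgebra $\mathbb{H}_P$ of $\mathbb{H}(G(m,1,n))$ attached to $\Pi_{\underline a}$. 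I would first record these identifications carefully, since the content of the lemma is that this parabolic subalgebra, which a priori retains \emph{all} of $\mathbb{C}[T]=\mathbb{C}[(\mathbb{Z}/m)^n]$ and all of $S(V)$, factors as a tensor product of smaller Dunkl--Opdam algebras, one for each block.

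The key step is to build the isomorphism blockwise. For each block $I_j$ of size $a_j$, let $T_j=(\mathbb{Z}/m)^{a_j}$ be the copy of the short-reflection torus supported on $I_j$, let $V_j = \Span\{v_i : i\in I_j\}$, and let $\mathbb{H}(G(a_j,1,m))$ denote the Dunkl--Opdam algebra built from $S_{a_j}\ltimes T_j$ acting on $V_j$, presented as in Lemma \ref{relationssymmetric} with generators $z_i$ ($i\in I_j$), the short reflections $g_i$ ($i\in I_j$), and the transpositions $s_i$ with $i,i+1\in I_j$. I would define a map
$$
\bigotimes_{j=0}^{m-1}\mathbb{H}(G(a_j,1,m))\longrightarrow stab(\mu_{\underline a})
$$
sending each generator to the element of the same name inside $\mathbb{H}(G(m,1,n))$. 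To see this is well defined one checks that the defining relations of Lemma \ref{relationssymmetric} for the $j$-th factor survive: the only relation that could fail is $z_i s_i = s_i z_{i+1} - \kappa\epsilon_{i,i+1}$ with $i,i+1\in I_j$, but $\epsilon_{i,i+1}=\sum_{l=1}^{m-1}g_i^lg_{i+1}^{-l}$ already lies in $\mathbb{C}[T_j]$, so the relation makes sense inside the $j$-th factor; and generators attached to distinct blocks commute because $z_i$ commutes with every $g_k$ and with every $s_k$ for $k,k+1$ outside $\{i-1,i,i+1\}$, and by construction no transposition of $S_{\underline a}$ links two different blocks. The tensor factors jointly generate $stab(\mu_{\underline a})$: they produce all $z_i$, all $g_i$, and all $s_i\in S_{\underline a}$, and these generate $\mathbb{C}[T]\#S(V)$ together with $\mathbb{C}[S_{\underline a}]$, which is exactly the parabolic subalgebra. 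Surjectivity follows. For injectivity I would compare graded dimensions: by the PBW property of $\mathbb{H}_{DO}$ (the Drinfeld/PBW criterion of Theorem \ref{Drinfeldpresentation} and Definition, so that $\mathrm{gr}\,\mathbb{H}_{DO}\cong\mathbb{C}[G]\#S(V)$ and the same for each factor), $\mathrm{gr}\,stab(\mu_{\underline a})\cong\mathbb{C}[S_{\underline a}\ltimes T]\#S(V)$, while $\mathrm{gr}$ of the tensor product is $\bigotimes_j\big(\mathbb{C}[S_{a_j}\ltimes T_j]\#S(V_j)\big)$; since $T=\prod_jT_j$, $V=\bigoplus_jV_j$ and $S_{\underline a}=\prod_jS_{a_j}$ act compatibly, these associated graded algebras coincide, the map is filtered and surjective, hence an isomorphism.

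The main obstacle I anticipate is purely bookkeeping: making sure the "$z_i$" in the $j$-th tensor factor really is the one whose relations close up within that factor, given that the global $z_i$ carries correction terms $\kappa\epsilon_{i,j}$ that in principle range over $j$ outside $I_j$. The point to pin down is that those cross-block correction terms only appear in commutators $[z_j,s_i]$ with $s_i\notin S_{\underline a}$ (equivalently, $\mu_{\underline a}(g_i)\ne\mu_{\underline a}(g_{i+1})$), and such $s_i$ are not in $stab(\mu_{\underline a})$, so they never enter the relations we must verify; within a block the relations are verbatim those of the smaller Dunkl--Opdam algebra. Once that is isolated, the rest is the standard filtered-algebra argument above, and no further subtlety arises. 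I would also remark, as the paper does elsewhere, that this is the algebra-level shadow of the combinatorial fact that irreducibles of $G(m,1,n)$ are indexed by $m$-multipartitions of $n$, which is what makes the tensor decomposition the natural one.
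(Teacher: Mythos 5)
Your proposal is correct and follows essentially the same route as the paper's proof: identify the blockwise subalgebras $\mathbb{H}(G(m,1,a_j))$ inside $stab(\mu_{\underline a})$, check that factors attached to distinct blocks commute, and conclude via the PBW-type vector-space factorization $S(V)\otimes\mathbb{C}[T]\otimes\mathbb{C}[S_{a_0}\times\cdots\times S_{a_{m-1}}]$ that the natural map from the tensor product is an isomorphism. Your explicit remark that the correction terms $\kappa\epsilon_{i,i+1}$ stay in $\mathbb{C}[T_j]$ (so the block relations close up and straightening never leaves the parabolic) and your associated-graded argument for injectivity are just a slightly more formal packaging of the paper's direct dimension comparison.
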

\begin{proof}
The subalgebra generated by $S_{a_0} \times S_{a_1} \times ... S_{a_{m-1}}$, $\mathbb{C}[T]$ and $S(V)$ certainly contains $\mathbb{H}(G(a_i,1,m))$ for every $i = 0,...,m-1$. The algebra $\mathbb{H}(G(a_i,1,m))$ consists, as a vector space of $S(V_i) \otimes \mathbb{C}[T_i]\otimes S_{a_i}$ where $V_i$ is the span of $\epsilon_j$, and $\mathbb{C}[T_i]$ is generated by $g_j$ such that $\sum_{k=0}^{i-1}a_{k}<j \leq \sum_{k=0}^{i}a_{k}$. We have $V_0 \oplus ...\oplus V_{m-1} = V$ hence $S(V_0) \otimes ...\otimes S(V_{m-1}) = S(V_0 \oplus ...\oplus V_{m-1}) = S(V)$. Similarly $\mathbb{C}[T_0]\otimes ...\otimes \mathbb{C}[T_{m-1}] = \mathbb{C}[T]$, and $\mathbb{C}[S_{a_0}]\otimes...\otimes \mathbb{C}[S_{a_{m-1}}] = \mathbb{C}[S_{a_0}\times ... \times S_{a_{m-1}}]$. Hence as a vector space:
$$\stab(\mu_{\underline{a}}) = \bigotimes_{i=0}^{m-1} S(V_i)\otimes \mathbb{C}[T_i]\otimes \mathbb{C}[S_{a_i}].$$
Each $\mathbb{H}(G(a_i,1,m))$ is a subalgebra and as vector spaces we have equality, one just needs to check that each subalgebra commutes with the other subalgebras. We already know $S_{a_i}$ and $S_{a_j}$ commute, for $i \neq j$, and $S_{a_i}$ commutes with $\mathbb{C}[T_j]$ because $S_{a_i}$ fixes $T_j$. Similarly $S_{a_i}$ fixes $V_j$ so  $s_{\alpha_i} \in S_{a_i}$ commutes with $\epsilon_j \in S(V_j)$.\end{proof}

\begin{lemma}\label{functors}
The set of irreducible representations $\irr(\mathbb{H}(G(m,1,n))| \mu_{\underline{a}})$ is in natural one-one correspondence with $\irr(\stab(\underline{a}))| \mu_{\underline{a}})$.
The bijection $F$ is defined by
$$F^{-1}:\irr(\stab(\underline{a})| \mu_{\underline{a}})\to \irr(\mathbb{H}(G(m,1,n))| \mu_{\underline{a}}),$$
$$F^{-1}(W) =   Ind_{\stab(\underline{a})}^{\mathbb{H}(G(m,1,n))}W$$
and
$$F:\irr(\mathbb{H}(G(m,1,n)) | \mu_{\underline{a}}) \to \irr(\stab(\underline{a})| \mu_{\underline{a}})$$
$$F(U) =  \text{Unique irreducible submodule of } Res^{\mathbb{H}(G(m,1,n))}_{\stab(\underline{a})}U \text{ with weight } \mu_{\underline{a}}.$$
\end{lemma}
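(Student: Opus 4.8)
The plan is to establish the bijection by a standard Clifford-theory style argument, adapted to this graded Hecke algebra setting. First I would set up the key observation that motivates everything: for an irreducible $\mathbb{H}(G(m,1,n))$-module $U$ lying in $Irr(\mathbb{H}(G(m,1,n))\mid\mu_{\underline a})$, the $\mathbb{C}[T]$-isotypic decomposition of $U$ is governed by $S_n$ permuting the characters $\mu\in\mathbb{C}[T]^*$, and by Lemma \ref{orbit} all the $\mathbb{C}[T]$-weights of $U$ lie in a single $S_n$-orbit, namely the orbit of $\mu_{\underline a}$. The stabiliser of $\mu_{\underline a}$ inside $S_n$ is exactly $S_{a_0}\times\cdots\times S_{a_{m-1}}$, so the subalgebra $stab(\mu_{\underline a})$ is precisely the isotropy subalgebra for this weight; this is what makes the induction/restriction pair behave like the classical little-group construction.

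Next I would verify that $F^{-1}$ is well-defined, i.e.\ that $Res^{\mathbb{H}(G(m,1,n))}_{stab(\underline a)}U$ has a \emph{unique} irreducible submodule with $\mathbb{C}[T]$-weight $\mu_{\underline a}$. The $\mu_{\underline a}$-weight space $U_{\mu_{\underline a}}$ is a module for $stab(\mu_{\underline a})$ since $stab(\mu_{\underline a})$ is exactly the part of $\mathbb{H}(G(m,1,n))$ preserving this weight space (the coset representatives of $S_n/(S_{a_0}\times\cdots\times S_{a_{m-1}})$ move $\mu_{\underline a}$ to a different character). I would argue that $U_{\mu_{\underline a}}$ is irreducible as a $stab(\underline a)$-module: any proper submodule would generate, under the action of coset representatives $c\in S_n$, a proper $\mathbb{H}(G(m,1,n))$-submodule of $U=\bigoplus_c c\cdot U_{\mu_{\underline a}}$, contradicting irreducibility of $U$. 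This simultaneously shows the submodule is unique, so $F^{-1}(U):=U_{\mu_{\underline a}}$ makes sense. Then one checks $F$ is well-defined: $Ind_{stab(\underline a)}^{\mathbb{H}(G(m,1,n))}W$ for $W$ irreducible with weight $\mu_{\underline a}$ has a unique irreducible quotient (or submodule) with weight $\mu_{\underline a}$ — here I would use that the induced module decomposes as a $\mathbb{C}[T]$-module into $\bigoplus_{c\in S_n/(S_{a_0}\times\cdots)}c\cdot W$, so the $\mu_{\underline a}$-weight space of the induced module is just $W$ itself, and any submodule meeting $W$ is forced to contain it. Actually I would probably define $F(W)$ more carefully as that unique irreducible constituent containing the $\mu_{\underline a}$-weight space $W$, rather than the whole induced module.

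The final step is to show $F$ and $F^{-1}$ are mutually inverse. For $F^{-1}\circ F$: starting with irreducible $W$ with weight $\mu_{\underline a}$, the module $F(W)$ has $\mu_{\underline a}$-weight space equal to $W$ (by the weight-space computation above), so $F^{-1}(F(W))=W$. For $F\circ F^{-1}$: starting with irreducible $U$, we have $F^{-1}(U)=U_{\mu_{\underline a}}$, and $U$ is generated over $\mathbb{H}(G(m,1,n))$ by $U_{\mu_{\underline a}}$ (since $U$ is irreducible and $U_{\mu_{\underline a}}\neq 0$), hence $U$ is a quotient of $Ind_{stab(\underline a)}^{\mathbb{H}(G(m,1,n))}U_{\mu_{\underline a}}$ and contains the $\mu_{\underline a}$-weight space, so $U\cong F(U_{\mu_{\underline a}})$. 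I expect the main obstacle to be the bookkeeping around the $S(V)$-action: unlike in the classical finite-group Clifford theory, the algebra also contains the polynomial part $S(V)$, and one must confirm that $S(V)$ preserves each $\mathbb{C}[T]$-weight space (which it does, since $[S(V),\mathbb{C}[T]]=0$ by the defining relations) so that $U_{\mu_{\underline a}}$ really is a $stab(\mu_{\underline a})$-submodule and not merely a $\mathbb{C}[S_{a_0}\times\cdots\times S_{a_{m-1}}]$-submodule. Once that commutation is in hand, the rest is the formal Mackey-style argument.
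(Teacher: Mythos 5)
Your overall strategy is the same as the paper's: decompose everything into $\mathbb{C}[T]$-weight spaces, use that the weights of an irreducible module form a single $S_n$-orbit (Lemma \ref{orbit}), identify $S_{a_0}\times\cdots\times S_{a_{m-1}}$ as the stabiliser of $\mu_{\underline{a}}$, and run a Clifford/Mackey-type induction--restriction argument. The paper closes the key step by quoting Mackey's irreducibility criterion: the $stab(\underline{a})$-constituents of $Ind_{stab(\underline{a})}^{\mathbb{H}(G(m,1,n))}W$ are the twists $W^{c}$, $c$ ranging over coset representatives of $S_n/S_P$, and these have pairwise distinct $\mathbb{C}[T]$-weights $c(\mu_{\underline{a}})$, so $\Hom_{stab(\underline{a})}(W,W^{c})=0$ for $c\neq 1$ and the induced module is irreducible. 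Your direct weight-space argument is a perfectly good (and arguably more elementary) substitute for this.

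The one genuine flaw is your retreat at the end of the second paragraph: redefining $F(W)$ as ``the unique irreducible constituent containing the $\mu_{\underline{a}}$-weight space'' is not the lemma being proved --- the statement asserts that the induced module \emph{itself} is irreducible, and that is precisely the content the paper's Mackey argument supplies. Moreover the hedge is unnecessary: the two facts you already have in hand close the gap. Since $T$ is finite abelian, $\mathbb{C}[T]$ acts semisimply, so any nonzero submodule $Z\subseteq Ind_{stab(\underline{a})}^{\mathbb{H}(G(m,1,n))}W=\bigoplus_{c}c\otimes W$ decomposes into genuine $\mathbb{C}[T]$-weight spaces, each weight being of the form $c(\mu_{\underline{a}})$; picking a nonzero component of weight $c(\mu_{\underline{a}})$ and applying $c^{-1}$ shows $Z$ meets $1\otimes W$ nontrivially. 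Since $Z\cap(1\otimes W)$ is a $stab(\underline{a})$-submodule of the irreducible module $W$, it equals $1\otimes W$, which generates the whole induced module; hence $Ind\,W$ is irreducible and $F$ may be taken exactly as stated. With that repaired, your verification that $F$ and $F^{-1}$ are mutually inverse (the $\mu_{\underline{a}}$-weight space of $Ind\,W$ is $W$; and for irreducible $U$ the surjection $Ind\,U_{\mu_{\underline{a}}}\to U$ between irreducibles is an isomorphism) goes through, and your remark that $S(V)$ preserves weight spaces because it commutes with $\mathbb{C}[T]$ is a correct point of care that the paper leaves implicit.
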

For an irreducible module $U$ in $\irr(\mathbb{H}(G(m,1,n))| \mu_{\underline{a}})$,  $F^{-1}(U)$ is the $\mu_{\underline{a}}$-weight space of $U$.
\begin{proof} Let $P$ be the corresponding partition defined by the subset $\Pi_{\underline{a}} \subset \Pi$ and
set $C$ to be the set of coset representatives of the parabolic group $S_P$ in $S_n$. For a $\stab(\underline{a})$ module $W$ the module  $W^{c}$, for $c \in C$, is isomorphic to $W$ as a vector space with the action $$b \cdot W^{c} = c^{-1} b c W \text{ for all } b \in \stab(\underline{a}).$$ 
We must check that $F(W)$ for $W \in \irr(\stab(\mu_{\underline{a}}))$ is an irreducible $\mathbb{H}(G(m,1,n))$ module. The rest follows easily.
As a vector space $\mathbb{H}(G(m,1,n)) \cong  \oplus_{c\in C} c^{-1}\stab(\underline{a})c$. Here $C$ is a set of coset representative of the parabolic group $S_P$ in $S_n$.
The $\stab(\underline{a})$-composition series of $Ind_{\stab(\underline{a})}^{\mathbb{H}(G(m,1,n))} W$ consists of the $\stab(\underline{a})$ modules $W^{c}$ where $c\in C$. We must show that $Ind_{\stab(\underline{a})}^{\mathbb{H}(G(m,1,n))} W$ is irreducible. The module $W$ is an irreducible $\stab(\underline{a})$ module. If $W$ is an irreducible $G(m,1,a_0) \times .. \times G(m,1,a_{m-1})$ module then utilising Mackey's criterion
 for finite groups we need to show that$W^c$ are non isomorphic.  However by construction $W$ has only weights containing $\mu_{\underline{a}}$, and $W^{c}$ will only have weights containing $c(\mu_{\underline{a}})$, and since $S_P$ is the stabiliser of $\mu_{\underline{a}}$ in $S_n$ then for all $c \neq 1$ we have  $c(\mu_{\underline{a}}) \neq \mu_{\underline{a}}$. Therefore each $W^{c}$ has a different set of weights and hence are not isomorphic. Hence if $W$ is an irreducible $G(m,1,n)$ module then  using Mackey's irreducibility criterion $Ind_{\stab(\underline{a})}^{\mathbb{H}(G(m,1,n))} W$ is an irreducible $G(m.1,n)$ module and hence $F(W)$ is irreducible as a $\mathbb{H}(G(m,1,n)$ module.

If $W$ is reducible as a $G(m,1,a_0) \times .. \times G(m,1,a_{m-1})$ module then $W = \bigoplus V_i$ as irreducible  $G(m,1,a_0) \times .. \times G(m,1,a_{m-1}$ modules. by the same argument as above the induction of each of these is an irreducible $G(m,1,n)$ module. We have
$$Ind_{\stab(\underline{a})}^{\mathbb{H}(G(m,1,n))} W = \bigoplus Ind^{G(m,1,n)} V_i$$ as a $G(m,1,n)$ module. Suppose that $Ind_{\stab(\underline{a})}^{\mathbb{H}(G(m,1,n))} W$ is not irreducible as a $\mathbb{H}(G(m.1,n)$ module then some direct sum of $V_i's$ is a submodule. Suppose $\bigoplus_{i\in I} Ind V_i$ is a submodule. Therefore $\bigoplus{i \in I} V_i$ is a $\stab(\underline{a})$ submodule of $W$ hence since $W$ is irreducible as a $\stab(\underline{a})$ module  then $I = {0,1,...,m-1}$ and the only irreducible non-trivial submodule is the whole module. Therefore $Ind_{\stab(\underline{a})}^{\mathbb{H}(G(m,1,n))} W$ is irreducible.

 However by construction $W$ has only weights containing $\mu_{\underline{a}}$, and $W^{c}$ will only have weights containing $c(\mu_{\underline{a}})$, and since $S_P$ is the stabiliser of $\mu_{\underline{a}}$ in $S_n$ then for all $c \neq 1$ we have  $c(\mu_{\underline{a}}) \neq \mu_{\underline{a}}$. Therefore each $W^{c}$ has a different set of weights and hence are not isomorphic. So using Mackey's irreducibility criterion $F(W)$ is irreducible. 
It is easy to verify that $F^{-1} \cdot F (V) = V$ using the universal property of induced modules and similarly $F \cdot F^{-1} (W) = W$. 
\end{proof}

Given a representation of $(V,\pi) \in  \irr(\stab(\underline{a})| \mu_{\underline{a}})$ we can explicitly describe how $g_i \in G(m,1,n)$ acts. Since this algebra stabilises $\mu_{\underline{a}}$ this is the only $\mathbb{C}[T]$ weight occurring in $V$. Therefore 
$$g_i = \mu_{\underline{a}}(g_i) Id.$$

Let $\alpha_i = \epsilon_i - \epsilon_{i+1}$, if we study the relation $s_{\alpha_i} \alpha_i = s_{\alpha_i}(\alpha_i)s_{\alpha_i} + \sum_{l=0}^{m-1}g_i^lg_{i+1}^{-l} $ in $\mathbb{H}(G(m,1,n)$, on $(V,\pi)$, the element  $\sum_{l=0}^{m-1}g_i^lg_{i+1}^{-l}$ is equal to $\pi(\sum_{l=0}^{m-1}g_i^lg_{i+1}^{-l}) = \sum_{l=0}^{m-1} \mu_{\underline{a}}(g_i)^l \mu_{\underline{a}}(g_{i+1})^{-l}Id$ which then equals $m$ if $\mu_{\underline{a}}$ is constant on $g_i$ and $g_{i+1}$ and $\sum_{l=0}^{m-1}g_i^lg_{i+1}^{-l}$ is zero if  $\mu_{\underline{a}} (g_i) \neq \mu_{\underline{a}}(g_{i+1})$.
One can summarise, on any representation in $\irr(\stab(\underline{a})| \mu_{\underline{a}})$
$$\sum_{l=0}^{m-1}g_i^lg_{i+1}^{-l} = \begin{cases} m & \text{ if } s_{\epsilon_i - \epsilon_{i-1}} \in  \stab(\mu_{\underline{a}}), \\
0 & \text{ otherwise.} \end{cases}$$
Recall from Lemma \ref{otimes} that $\stab(\underline{a})$ is isomorphic to $\mathbb{H}(G(a_0,1,m))\otimes ... \otimes\mathbb{H}(G(a_{m-1},1,m))$. We have shown that if  $(V,\pi) \in \irr(\stab(\underline{a}| \mu_{\underline{a}})$ then the relations  $s_{\alpha_i} \alpha_i = s_{\alpha_i}(\alpha_i)s_{\alpha_i} +\textbf{k} \sum_{l=0}^{m-1}g_i^lg_{i+1}^{-l} $  become $s_{\alpha_i} \alpha_i = s_{\alpha_i}(\alpha_i)s_{\alpha_i} + m $ on $V$ for all $\alpha_i \in \Pi_{\underline{a}}$ and the relations $s_{\alpha_i} \alpha_i = s_{\alpha_i}(\alpha_i)s_{\alpha_i} +\textbf{k} \sum_{l=0}^{m-1}g_i^lg_{i+1}^{-l} $  become $s_{\alpha_i} \alpha_i = s_{\alpha_i}(\alpha_i)s_{\alpha_i} + 0 $ on $V$ for all $\alpha_i \notin \Pi_{\underline{a}}$ .  Hence we can conclude that if $(V,\pi)  \in  \irr(\stab(\underline{a}| \mu_{\underline{a}}))$ then this representation factors through the algebra 
$$\mathbb{H}(S_{a_0})\otimes ... \otimes\mathbb{H}(S_{a_{m-1}}),$$
via the quotient by the ideal $I_{\underline{a}} = <g_i - \mu_{\underline{a}}(g_i)Id>$.
Explicitly 
$$\pi: \mathbb{H}(G(a_0,1,m))\otimes ... \otimes\mathbb{H}(G(a_{m-1},1,m))\twoheadrightarrow \mathbb{H}(S_{a_0})\otimes ... \otimes\mathbb{H}(S_{a_{m-1}}) \to GL(V).$$

Where $\mathbb{H}(S_n)$ is the usual graded Hecke algebra associated to $S_n$, with parameter $c(\alpha) = m\textbf{k}$.

\begin{lemma}\label{quotientspullback} The set $ \irr(\mathbb{H}(G(a_0,1,m))\otimes ... \otimes\mathbb{H}(G(a_{m-1},1,m))| \mu_{\underline{a}})$ is in one-one correspondence with $\irr(\mathbb{H}(S_{a_0}))\otimes ... \otimes \irr(\mathbb{H}(S_{a_{m-1}}))$. \end{lemma}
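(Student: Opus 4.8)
The plan is to leverage the factorization of representations established just before the statement, namely that every module in $Irr(\mathbb{H}(G(a_0,1,m))\otimes\cdots\otimes\mathbb{H}(G(a_{m-1},1,m))\mid\mu_{\underline{a}})$ factors through the quotient map $\pi$ by the ideal $I_{\underline{a}} = \langle g_i - \mu_{\underline{a}}(g_i)\mathrm{Id}\rangle$. Since the target of that quotient is $\mathbb{H}(S_{a_0})\otimes\cdots\otimes\mathbb{H}(S_{a_{m-1}})$, any representation with the prescribed $\mathbb{C}[T]$-weight descends to a representation of this tensor product of type A graded Hecke algebras. Conversely, any representation of $\mathbb{H}(S_{a_0})\otimes\cdots\otimes\mathbb{H}(S_{a_{m-1}})$ pulls back along $\pi$ to a representation of the original tensor product algebra on which $g_i$ acts by the scalar $\mu_{\underline{a}}(g_i)$, hence with $\mathbb{C}[T]$-weight exactly $\mu_{\underline{a}}$. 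So the first step is to set up the two maps: $\Psi$ sends a module $V$ on the left to $V$ viewed as an $\mathbb{H}(S_{a_0})\otimes\cdots\otimes\mathbb{H}(S_{a_{m-1}})$-module via $\pi$, and $\Psi^{-1}$ pulls back along $\pi$.

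Second, I would check that $\Psi$ and $\Psi^{-1}$ preserve irreducibility and are mutually inverse. Irreducibility is immediate in both directions because a submodule for one algebra is automatically a submodule for the other: $\pi$ is surjective, so the image of the original algebra in $\mathrm{End}(V)$ equals the image of $\mathbb{H}(S_{a_0})\otimes\cdots\otimes\mathbb{H}(S_{a_{m-1}})$, and the lattice of submodules only depends on this image. That the maps are mutually inverse is formal: $\pi\circ(\text{pullback}) = \mathrm{id}$ at the level of module structures, and pulling back a module that already factors through $\pi$ recovers the original. The only slightly substantive point is that $\Psi^{-1}$ does land in $Irr(\cdots\mid\mu_{\underline{a}})$, i.e.\ that after pullback the $\mathbb{C}[T]$-weight really is $\mu_{\underline{a}}$ and nothing else; this follows because on the pulled-back module $g_i$ acts by the scalar $\mu_{\underline{a}}(g_i)$ by construction, so $\mu_{\underline{a}}$ is the unique $\mathbb{C}[T]$-weight.

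Third, I would invoke the standard fact that irreducible modules for a tensor product of finite-dimensional algebras $A_0\otimes\cdots\otimes A_{m-1}$ over $\mathbb{C}$ are precisely the outer tensor products $X_0\boxtimes\cdots\boxtimes X_{m-1}$ of irreducibles $X_i\in Irr(A_i)$ (this uses that $\mathbb{C}$ is algebraically closed, via the double centralizer theorem / the fact that $\mathbb{C}$-irreducibles are absolutely irreducible). Applying this with $A_i = \mathbb{H}(S_{a_i})$, which is finite-dimensional on each fixed central character block but more importantly has absolutely irreducible finite-dimensional simples, yields $Irr(\mathbb{H}(S_{a_0})\otimes\cdots\otimes\mathbb{H}(S_{a_{m-1}})) \cong Irr(\mathbb{H}(S_{a_0}))\otimes\cdots\otimes Irr(\mathbb{H}(S_{a_{m-1}}))$. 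Composing with $\Psi$ gives the desired bijection.

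The main obstacle I anticipate is the tensor-product-of-irreducibles step, because $\mathbb{H}(S_n)$ is infinite-dimensional, so one cannot quote the finite-dimensional statement verbatim; one must either restrict to finite-dimensional modules (which is the relevant category here, as the representations in question are finite-dimensional) and check that finite-dimensional simple $\mathbb{H}(S_n)$-modules are absolutely irreducible, or reduce to the finite-dimensional setting by passing to a suitable central quotient on which everything is finite-dimensional. This is routine but worth a sentence of care. Everything else — the two functors, their being inverse, preservation of irreducibility — is essentially bookkeeping once the quotient description preceding the lemma is in hand.
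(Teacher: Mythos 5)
Your proposal is correct and takes essentially the same route as the paper: both arguments identify the modules with $\mathbb{C}[T]$-weight $\mu_{\underline{a}}$ with modules pulled back along the quotient by $I_{\underline{a}} = \langle g_i - \mu_{\underline{a}}(g_i)\mathrm{Id}\rangle$ onto $\mathbb{H}(S_{a_0})\otimes\cdots\otimes\mathbb{H}(S_{a_{m-1}})$. Your extra care about preservation of the submodule lattice and about the external-tensor-product-of-irreducibles step (absolute irreducibility of the finite-dimensional simples of $\mathbb{H}(S_{a_i})$) fills in details the paper leaves implicit, but it is the same proof.
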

\begin{proof}
The irreducible representation in  $\irr(\mathbb{H}(G(a_0,1,m))\otimes ... \otimes\mathbb{H}(G(a_{m-1},1,m))| \mu_{\underline{a}})$ all occur as pullbacks of the irreducible representations of $\mathbb{H}(S_{a_0})\otimes ... \otimes\mathbb{H}(S_{a_{m-1}})$ via the specific quotient of $\mathbb{H}(G(a_0,1,m))\otimes ... \otimes\mathbb{H}(G(a_{m-1},1,m))| \mu_{\underline{a}}$ onto  $\mathbb{H}(S_{a_0})\otimes ... \otimes\mathbb{H}(S_{a_{m-1}})$, with the ideal 
$$I_{\underline{a}} = <g_i - \mu_{\underline{a}}(g_i)Id | i =1,...,n>.$$
Furthermore given a representation $U$ of $\mathbb{H}(S_{a_0})\otimes ... \otimes\mathbb{H}(S_{a_{m-1}})$ one can create a representation in $ \irr(\mathbb{H}(G(a_0,1,m))\otimes ... \otimes\mathbb{H}(G(a_{m-1},1,m))| \mu_{\underline{a}})$ by pulling back the representation $U$ from the quotient of $I_{\underline{a}}$. 
\end{proof}

\begin{theorem} \label{moritaequiv}
The irreducible representations of $\mathbb{H}(G(m,1,n))$ split into blocks which are induced from products of $\mathbb{H}(S_a)$ representations:
$$\irr(\mathbb{H}(G(m,1,n))) \cong \bigsqcup_{\underline{a}\in A} \irr(\mathbb{H}(S_{a_0}))\otimes ... \otimes \irr(\mathbb{H}(S_{a_{m-1}})).$$

\end{theorem}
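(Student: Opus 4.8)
The plan is to obtain the claimed bijection by composing the chain of correspondences established in the lemmas of this section, keeping careful track of the $\mu_{\underline{a}}$-weight label at every stage; the statement is essentially a repackaging of those results rather than a new argument.

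First I would use the block decomposition lemma proved above to write
$$Irr(\mathbb{H}(G(m,1,n))) = \bigsqcup_{\underline{a}\in A} Irr(\mathbb{H}(G(m,1,n))\mid \mu_{\underline{a}}),$$
so that it is enough to exhibit, for each fixed $\underline{a}\in A$, a bijection
$$Irr(\mathbb{H}(G(m,1,n))\mid \mu_{\underline{a}}) \;\longleftrightarrow\; Irr(\mathbb{H}(S_{a_0}))\otimes\cdots\otimes Irr(\mathbb{H}(S_{a_{m-1}})).$$
Fixing $\underline{a}$, I would then glue together three bijections: (1) Lemma~\ref{functors}, whose functors $F$ and $F^{-1}$ (induction from, and passing to the $\mu_{\underline{a}}$-weight space over, $stab(\underline{a})$) give $Irr(\mathbb{H}(G(m,1,n))\mid\mu_{\underline{a}}) \cong Irr(stab(\underline{a})\mid\mu_{\underline{a}})$; (2) Lemma~\ref{otimes}, which identifies $stab(\mu_{\underline{a}})$ with $\mathbb{H}(G(a_0,1,m))\otimes\cdots\otimes\mathbb{H}(G(a_{m-1},1,m))$ and, under this identification, sends the weight $\mu_{\underline{a}}$ to the product of the restricted characters $\mu_{\underline{a}}|_{T_i}$; and (3) the last lemma of the section, which pulls irreducibles back along the quotients by $I_{\underline{a}} = \langle g_j - \mu_{\underline{a}}(g_j)\operatorname{Id}\rangle$ to give the bijection with $Irr(\mathbb{H}(S_{a_0}))\otimes\cdots\otimes Irr(\mathbb{H}(S_{a_{m-1}}))$. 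Composing these and taking $\bigsqcup_{\underline{a}\in A}$ produces the asserted isomorphism.

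The only things needing genuine checking are that the weight label is transported correctly along the chain --- in particular that restriction to the $\mu_{\underline{a}}$-weight space is compatible with the tensor-product identification of Lemma~\ref{otimes}, so that a simple $\bigotimes_i \mathbb{H}(G(a_i,1,m))$-module carrying weight $\mu_{\underline{a}}$ really is an external tensor product $W_0 \boxtimes \cdots \boxtimes W_{m-1}$ with each $W_i$ a simple $\mathbb{H}(G(a_i,1,m))$-module on which $\mathbb{C}[T_i]$ acts by $\mu_{\underline{a}}|_{T_i}$. This factorisation (which rests on Schur's lemma together with finite-dimensionality of the simple modules appearing in the block $\mid\mu_{\underline{a}}$) is where any subtlety lies, and it has already been dealt with inside the lemmas; so I expect no real obstacle here beyond assembling the four bijections coherently and being careful that each is a bijection of sets of isomorphism classes rather than merely a surjection.
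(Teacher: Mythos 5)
Your proposal matches the paper's own argument: the theorem is obtained there exactly by assembling the block-decomposition lemma, Lemma \ref{functors}, Lemma \ref{otimes}, and the final pullback-along-$I_{\underline{a}}$ lemma, just as you describe. Your added remark about checking the external tensor factorisation of simples in the $\mu_{\underline{a}}$-block is a reasonable point of care but does not change the route.
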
 

If one considers a tempered  $\mathbb{H}(G(m,1,n))$ module, that is $V$ such  that the $\mathbb{C}[T
]\otimes S(\mathfrak{t})$ weights, $\mu \otimes \lambda$ are such that $Re(\lambda(x_i)) \leq 0$ for all fundamental coweights and that $Re(\lambda|_{\mathfrak{a}_\mathbb{R}}) = 0$. This condition is only dependent on the $S(\mathfrak{t})$ weight $\lambda$ therefore a tempered $\mathbb{H}(G(m,1,n)$ correspond to a $\mathbb{H}(S_n)$ tempered module with weight $\lambda$. Hence every $\mathbb{H}(S_n)$ tempered module $V$ there will correspond to $m$ different $\mathbb{H}(G(m,1,n)$ tempered modules. Each tempered $\mathbb{H}(G(m,1,n))$ module will be a pullback of the module $V$. However the difference between the $m$ different modules are that the short reflections $g_i$ will act by $\eta^{j}$ for fixed $j = 1,...,m$. 

This gives a method to parametrise the Langlands data for an irreducible $\mathbb{H}(G(m,1,n))$ module via tempered modules of $\mathbb{H}(S_a)$. 
Recall that every irreducible $\mathbb{H}(G(m,1,n))$ module can be realized as a quotient of 
$$\mathbb{H}(G(m,1,n)) \otimes_{\mathbb{H}(G(m,1,n))_P} \check{U}\otimes \mathbb{C}_\nu.$$
If we fix an irreducible module $X$ then using the above realization, we associate to it Langlands data $(P, U)$. 

Fix $P= (p_0,...,p_{m-1})$ a partition of n with at most $m$ parts. 
The tempered $\mathbb{H}(G(m,1,n))$ modules are the pullbacks of tempered $\mathbb{H}(S_{a_0})\otimes ... \otimes \mathbb{H}(S_{a_{m-1}})$ modules. 

Recall that the size of a partition $\lambda = \{x_1,...,x_j\}$ is $\sum_{i=1}^j x_j$. The tempered modules of graded Hecke algebra with real central character correspond to partitions $(e,\phi)$ where $\phi$ is nilpotent \cite[3.3]{CT13},\cite{KL87}. In the case of $W= S_n$ $e$ is always $1$ and $phi$ is characterized by it's Jordan form and hence corresponds to a partition of $n$. Hence the tempered modules of $\mathbb{H}(S_{p_0})\otimes ... \otimes \mathbb{H}(S_{p_{m-1}})$ with real central character will correspond to a set of $m$ partitions $\{\lambda_0,...,\lambda_{m-1}\}$ such that the sum of the sizes of the partitions $\lambda_i$ is $a_i$.

\begin{theorem} Let $P=(a_0,...a_{m-1})$ be a set associated to $\underline{a} = (a_0,...,a_m)$ with at most $m$ parts. We associate to $P$, a parabolic subalgebra $\mathbb{H}_P(G(m.1,n)) \subset \mathbb{H}(G(m,1,n))$. The tempered modules of the parabolic algebra $\mathbb{H}_P(G(m,1,n))$ are built up from tempered modules of each parabolic part $\mathbb{H}(G(m,1,a_i))$. By above a tempered module of  $\mathbb{H}(G(m,1,a_i))$ corresponds to a tempered module of $\mathbb{H}(S_{a_i})$. 
The tempered modules of  $\mathbb{H}(S_{a_i})$ with real central character are labelled by partitions of $a_i$. Hence tempered module of  $\mathbb{H}_P(G(m,1,n))$ with real central character  are labelled by multipartitions $\{\lambda_0,...,\lambda_{m-1}\}$ with $m$ partitions such that the size of $\lambda_i$ equals $a_i$. Furthermore one can construct these tempered modules via the pullback of

$$\mathbb{H}(G(m,1,n)) \to_{\phi} \mathbb{H}(S_{a_0}))\otimes ... \otimes (\mathbb{H}(S_{a_{m-1}}) \to GL( V_{\lambda_0} \otimes ...\otimes V_{\lambda_{m-1}}).$$
Where $V_{\lambda_i}$ is the tempered module of $\mathbb{H}(S_{a_i})$ corresponding to the partition $\lambda_i$ and $\phi$ is the quotient by the ideal $I_{\underline{a}} = <g_i - \mu_{\underline{a}}(g_i)Id | i =1,...,n>$.
\end{theorem}

\end{section}
\end{section}

\begin{section}{Dirac cohomology of the Dunkl-Opdam subalgebra}\label{diraccohofDO}

In this section we will use the description of irreducible representations from Section \ref{reptheory} to describe how the Dirac operator for the Dunkl-Opdam subalgebra acts on irreducible modules. We will show that  the Dirac operator $\mathcal{D}_{DO}$ for $\mathbb{H}_{DO}$ descends to a relevant Dirac operator for a tensor of type A graded Hecke algebras and then describe the Dirac operator in terms of Dirac operators for type A parabolic algebras.

Let $A$ be a Drinfeld algebra $T(V) \rtimes \mathbb{C}[G]/R$. We have an associated Clifford algebra $C(V)$, with respect to the $G$-invariant symmetric product $<,>$ . Given a $G$-invariant basis $B$ we defined the Dirac operator to be $$\sum_{b\in B} b \otimes b^* \in A \otimes C(V).$$

We have two presentations of the Dunkl-Opdam subalgebra, one producing  the Lusztig presentation  \ref{relationssymmetric} with commuting basis elements and the Drinfeld presentation used in Theorem \ref{Drinfeldpresentation} which shows that $\mathbb{H}(G(m,1,n))$ is a Drinfeld algebra. We used the Lusztig presentation to show the Morita equivalence of the Dunkl-Opdam subalgebra to a sum of tensors of type A graded Hecke algebras, this uses parabolic sub algebras. However the Dirac theory developed for the Dunkl-Opdam subalgebra uses the Drinfeld presentation. This Drinfeld presentation does not admit parabolic subalgebras. 

Let us recall that to transform between from the classical presentation to the Drinfeld presentation one takes the standard basis $\{z_1,...,z_n\}$ of the reflection representation of $S_n$ which along with $G(m,1,n)$ gives the classical presentation. Then to obtain the Drinfeld presentation we use the generators: $$\tilde{z}_i =z_i 
+\frac{\textbf{k}}{2}\sum_ {i<j}s_{i,j}\sum_{l=1}^{m-1}g_i^{-l}g_j^l -\frac{\textbf{k}}{2}\sum_{j<i} s_{i,j}\sum_{l=1}^{m-1}g_i^{-l}g_j^l  =z_i +\frac{\textbf{k}}{2}( \underline{M}_i-M_i ).$$ Recall $M_i$ and $\underline{M}_i$ are Jucys-Murphy elements of $G(m,1,n)$ with reverse orderings.
 The Dirac element in terms of the Drinfeld presentations is: 
$$\mathcal{D}_{DO} = \sum_{i=1}^n \tilde{z}_i \otimes z_i^*.$$
In terms of the Lusztig presentation $\{z_i\}$ the Dirac element is 
$$\mathcal{D}_{DO} = \sum_{i=1}^{n}\left( z_i 
+\frac{\textbf{k}}{2}\sum_ {i<j}s_{i,j}\sum_{l=1}^{m-1}g_i^{-l}g_j^l -\frac{\textbf{k}}{2}\sum_{j<i} s_{i,j}\sum_{l=1}^{m-1}g_i^{-l}g_j^l \right) \otimes z_i^*.$$
\begin{definition} Given a  $\mathbb{H}$ module $X$ and a spinor $S$ of $C(V)$, then $\mathcal{D}_{DO}: X \otimes S \to X \otimes S$. The Dirac cohomology of $X$ with respect to $S$ is defined to be 
$$\Ker(\mathcal{D}_{DO} ) / \im(\mathcal{D}_{DO}) \cap \Ker (\mathcal{D}_{DO}).$$
Since $\mathcal{D}_{DO}$ $\sgn$-commutes with the group $\widetilde{G}$ then the Dirac cohomology is naturally a $\widetilde{G}$ module.\end{definition}

From section \ref{reptheory} if $V$ contains a weight $\mu_{\underline{a}}$ corresponding to $\underline{a} = \{a_0,...,a_{m-1}\}$ then we will write $V_{\mu_{\underline{a}}}$ for the $\mathbb{C}[T]$-weight space corresponding to the weight $\underline{a}$. We can decompose $V$ into $\mathbb{C}[T]$ weight spaces.
$$V = \bigoplus_{c\in C} V_{\mu_{\underline{a}}}^c.$$ 
Lemma \ref{functors} shows that $V_{\mu_{\underline{a}}}$ is the image of the functor $F$. It is a $\stab(\mu_{\underline{a}})$ module and is the pullback of a tensor of $\mathbb{H}_{s_{a_i}}$ modules.
A problem that occurs is that the Dirac operator $\mathcal{D}_{DO}$ does not sit in the subalgebra $\stab(\mu_{\underline{a}}) \cong \bigotimes_{i=1}^{m-1} \mathbb{H}_{DO}(G(m,1,a_i)$. 
We will look at the Dirac operators already given for the standard type A graded Hecke algebra.
\begin{definition}\label{DfortypeA}\cite{BCT12} For the graded Hecke algebra $\mathbb{H}(S_k)$ the Dirac operator is 
$$D_{S_k}=\sum_{i=1,...,k}  \left( z_i +\frac{m\textbf{k}}{2}\sum_ {i<j}s_{i,j} -\frac{m\textbf{k}}{2}\sum_{j<i} s_{i,j} \right) \otimes z_i^*.$$\end{definition}
\begin{remark}We abuse notation here as $z_i$ in this context denotes the same basis as we have used in the definition of $\mathbb{ H}_{DO}$ but of course it is not in the same algebra. We justify this since all surjections of $\mathbb{H}_{DO}$ onto $\mathbb{H}_{S_n}$ preserve this notation. We have used the parameter $m\textbf{k}$ as opposed to $\textbf{k}$ for $\mathbb{H}_{S_n}$ because naturally our map sends $\mathbb{H}_{DO}$ to $\mathbb{H}_{S_n}$ with parameter $m\textbf{k}$.
\end{remark}

Recall that the weight space  $V_{\mu_{\underline{a}}}$ is naturally a $\bigotimes_{i=1}^{m-1} \mathbb{H}_{S_{a_i}}$ module, via the functor $F$ defined in the Lemma \ref{functors}. We extend Definition \ref{DfortypeA} to define Dirac operator for $\bigotimes_{i=1}^{m-1} \mathbb{H}_{S_{a_i}}$:
$$\mathcal{D}_{S_{a_0}\times...\times S_{a_{m-1}}} = \mathcal{D}_{S_{a_0}} \otimes...\otimes \mathcal{D}_{S_{a_{m-1}}}.$$ 
Written out explicitly this is
$$\mathcal{D}_{S_{a_i}\times ...\times S_{a_{m-1}}} = \sum_{i=0}^{m-1}\sum_{j=a_{i-1}}^{j=a_i}\left( z_j +\frac{m\textbf{k}}{2}\sum_ {j<k\leq a_i}s_{j,k} -\frac{m\textbf{k}}{2}\sum_{a_{i-1}<k<j} s_{j,k} \right) \otimes z_j^*.$$
Here we have associated $\bigotimes_{i=1}^{m-1}S(V_i)$ with $S(\oplus V_i)$. Similarly we have substituted $ C(\oplus V_i)= \bigotimes C(V_i)$.
Initially this looks like the Dirac operator for $\mathbb{H}_{S_n}$, however one should notice that not all of the reflections are involved in this Dirac operator. We highlight this with an example.
\begin{example} Let n=3. The Dirac operator for $\mathbb{H}_{S_3}$ is
$$\left(z_1 -\frac{m\textbf{k}}{2}(1,2)-\frac{m\textbf{k}}{2}(1,3)\right)\otimes z_1^* + \left(z_2 +\frac{m\textbf{k}}{2}(1,2) - \frac{m\textbf{k}}{2}(2,3)\right) \otimes z_2^* $$
$$ +\left(z_3 +\frac{m\textbf{k}}{2}(1,3) + \frac{m\textbf{k}}{2}(2,3)\right)\otimes z_3^*.$$
However the Dirac operator for $\mathbb{H}_{s_1}\otimes \mathbb{H}_{S_2} \subset \mathbb{H}_{S_3}$ is
$$z_1\otimes z_1^* + \left (z_2  - \frac{m\textbf{k}}{2}(2,3)\right) \otimes z_2^* + \left(z_3  + \frac{m\textbf{k}}{2}(2,3)\right)\otimes z_3^*.$$
One can see that there are four reflections in the $\mathcal{D}_{\mathbb{H}(S_3)}$ not involved in $\mathcal{D}_{\mathbb{H}(S_1)} \times \mathcal{D}_{\mathbb{H}(S_2)}.$ 
\end{example}

Viewing $V_{\mu_{\underline{a}}}$ as a $\stab(\mu_{\underline{a}})$ module, $V$ is a sum of twists of $F(V)$. Let us look at the $\mathbb{C}[T]$-invariant element of  $\stab(\mu_{\underline{a}})$ which maps to $\mathcal{D}_{S_{a_0}\times...\times S_{a_{m-1}}}$, this is:
$$\sum_{i=0}^{m-1}\sum_{j=a_{i-1}}^{j=a_i}\left( z_j +\frac{\textbf{k}}{2}\sum_{l=1}^{m-1}g_j^{-l}g_k^l\sum_ {j<k\leq a_i}s_{j,k} -\frac{\textbf{k}}{2}\sum_{l=1}^{m-1}g_k^{-l}g_j^l\sum_{a_{i-1}<k<j} s_{j,k} \right) \otimes z_j.$$ 
Written this way one notices that this looks very similar to the $\mathbb{H}_{DO}$ Dirac operator however it excludes the reflections that are not in the parabolic subgroup that stabilises $\mu_{\underline{a}}$. The following lemma shows that the difference vanishes on $V_{\mu_{\underline{a}}}$.
\begin{lemma} Given an irreducible module $V_{\underline{a}}$ with $\mathbb{C}[T]$ weight $\underline{a}$ then on the subspace $F(V_{\underline{a}})$ the Dirac operator for $\mathbb{H}_{DO}$ acts by the Dirac operator $\mathcal{D}_{S_{a_0}\times...\times S_{a_{m-1}}}$. \end{lemma}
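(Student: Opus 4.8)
The plan is to compare the two operators term by term on $F^{-1}(V_{\underline{a}})$ by exploiting the fact that the $\mathbb{C}[T]$ weight is constant and equal to $\mu_{\underline{a}}$ on this subspace. Recall from Theorem \ref{Drinfeldpresentation} that the $\mathbb{H}_{DO}$ Dirac element, written in the classical basis, is
$$\mathcal{D} = \sum_{i=1}^{n}\left( z_i +\frac{\kappa}{2}\sum_{i<j}(i,j)\sum_{l=1}^{m-1}g_i^{-l}g_j^l -\frac{\kappa}{2}\sum_{j<i}(i,j)\sum_{l=1}^{m-1}g_i^{-l}g_j^l\right)\otimes z_i,$$
while the target operator $\mathcal{D}_{S_{a_0}\times\cdots\times S_{a_{m-1}}}$ is the same sum but with the inner sums over $j<k\le a_i$ and $a_{i-1}<k<j$ only, i.e. restricted to transpositions $(j,k)$ whose endpoints lie in the \emph{same} block of the composition $\underline{a}$. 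So the difference $\mathcal{D}-\mathcal{D}_{S_{a_0}\times\cdots\times S_{a_{m-1}}}$ is
$$\frac{\kappa}{2}\sum_{i}\ \sum_{\substack{j:\ (i,j)\ \text{crosses blocks}}} \pm (i,j)\sum_{l=1}^{m-1}g_i^{-l}g_j^l\ \otimes\ z_i,$$
where $(i,j)$ \emph{crosses blocks} means $i$ and $j$ lie in different blocks of $\underline{a}$, equivalently (by the description of $\Pi_{\underline{a}}$) $\mu_{\underline{a}}(g_i)\ne\mu_{\underline{a}}(g_j)$.

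First I would note that each element $\sum_{l=1}^{m-1}g_i^{-l}g_j^l$ acts on $F^{-1}(V_{\underline{a}})$ as a scalar, because $F^{-1}(V_{\underline{a}})$ is a $\mathrm{stab}(\mu_{\underline a})$-module on which $g_i$ acts by $\mu_{\underline a}(g_i)\,\mathrm{Id}$; this is exactly the observation already made in Section \ref{reptheory} just before the lemma on $Irr(\mathbb{H}(G(a_0,1,m))\otimes\cdots)$. The scalar is $\sum_{l=1}^{m-1}\mu_{\underline a}(g_i)^l\mu_{\underline a}(g_j)^{-l}$. When $(i,j)$ crosses blocks, $\mu_{\underline a}(g_i)/\mu_{\underline a}(g_j)$ is a nontrivial $m$-th root of unity $\eta^{r}$ with $1\le r\le m-1$, so the scalar is $\sum_{l=1}^{m-1}\eta^{rl}=\sum_{l=0}^{m-1}\eta^{rl}-1=0-1=-1$; more to the point, the full element appearing in $\mathcal{D}$, namely $(i,j)\sum_{l=1}^{m-1}g_i^{-l}g_j^l$, must be reassembled correctly: writing $\sum_{l=0}^{m-1}g_i^{-l}g_j^l$ for the symmetric version one sees $\sum_{l=1}^{m-1}\eta^{rl}=-1\neq 0$, so I cannot just say the coefficient vanishes. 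The correct bookkeeping is to observe that the term in the \emph{Dunkl--Opdam relation} that pushes past a transposition is $\epsilon_{ij}=\sum_{l=1}^{m-1}g_i^lg_j^{-l}$, and that the Dirac operator legitimately lives in $\mathbb{H}_{DO}\otimes C(V)$ only as written; what I actually need is that when acting on $V_{\underline a}$, the transposition $(i,j)$ with $i,j$ in different blocks moves the weight $\mu_{\underline a}$-space to a \emph{different} weight space $c(\mu_{\underline a})$, so $(i,j)\cdot F^{-1}(V_{\underline a})$ is orthogonal to (disjoint from) $F^{-1}(V_{\underline a})$ inside $V$. Since the Dirac cohomology computation in the next steps only concerns the action of $\mathcal{D}$ restricted-and-projected back to $F^{-1}(V_{\underline a})$ (the operator whose kernel and image we want), the cross-block terms contribute nothing: their image lands in $\bigoplus_{c\ne 1}V_{\mu_{\underline a}}^{c}\otimes C(V)$.

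So the key steps, in order, are: (1) write $\mathcal{D}-\mathcal{D}_{S_{a_0}\times\cdots\times S_{a_{m-1}}}$ explicitly as the sum of cross-block terms, as above; (2) using the description of $\Pi_{\underline a}$ as $\{\epsilon_i-\epsilon_{i+1}:\mu_{\underline a}(g_i)=\mu_{\underline a}(g_{i+1})\}$, identify the cross-block transpositions $(i,j)$ as precisely those with $\mu_{\underline a}(g_i)\ne\mu_{\underline a}(g_j)$; (3) on $F^{-1}(V_{\underline a})=V_{\mu_{\underline a}}$ the $g$'s act by the scalars $\mu_{\underline a}(g_\bullet)$, so these cross-block terms act as scalar multiples of the transpositions $(i,j)$, and each such transposition carries $V_{\mu_{\underline a}}$ into a weight space $V_{(i,j)\cdot\mu_{\underline a}}\ne V_{\mu_{\underline a}}$; (4) conclude that as endomorphisms of $V_{\mu_{\underline a}}\otimes C(V)$ followed by projection onto $V_{\mu_{\underline a}}\otimes C(V)$, the operators $\mathcal{D}$ and $\mathcal{D}_{S_{a_0}\times\cdots\times S_{a_{m-1}}}$ agree, which is the assertion of the lemma once one checks that the Dirac cohomology of $V$ is computed blockwise (something the following section sets up).

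The main obstacle I expect is purely a matter of correct interpretation rather than a hard computation: making precise the phrase \textquotedblleft the Dirac operator for $\mathbb{H}_{DO}$ acts by $\mathcal{D}_{S_{a_0}\times\cdots\times S_{a_{m-1}}}$ on $F^{-1}(V_{\underline a})$.\textquotedblright{} The honest statement is not that $\mathcal{D}$ and $\mathcal{D}_{S_{a_0}\times\cdots\times S_{a_{m-1}}}$ coincide as operators on all of $V$ — they do not, since $\mathcal{D}$ involves cross-block transpositions which genuinely move weight spaces — but that their \emph{restrictions followed by the projection back to the $\mu_{\underline a}$-isotypic part} coincide. One must therefore be careful to set up the right ambient space: either work with the $\mathbb{C}[T]$-weight decomposition $V=\bigoplus_{c}V_{\mu_{\underline a}}^c$ and observe $C(V)$ commutes with the $\mathbb{C}[T]$-action, so $\mathcal{D}$ respects a grading on $V\otimes C(V)$ whose degree-zero piece is $V_{\mu_{\underline a}}\otimes C(V)$, and only the $\mathcal{D}_{S_{a_0}\times\cdots\times S_{a_{m-1}}}$-part of $\mathcal{D}$ preserves that piece; or phrase everything in terms of the functor $F$ and check $F$ intertwines the two Dirac elements up to the cross terms that $F^{-1}$ kills. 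Once this framework is fixed, steps (1)--(4) are immediate from the scalar action of $\mathbb{C}[T]$ established in Section \ref{reptheory}.
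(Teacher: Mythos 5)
Your reinterpretation of the lemma as a statement about ``restriction followed by projection back to $V_{\mu_{\underline{a}}}\otimes C(V)$'' is where the argument diverges from what is actually needed, and the divergence comes from taking the lower limit $l=1$ in the displayed formula for $\mathcal{D}$ at face value. The Dirac element is $\sum_i \tilde{z}_i\otimes z_i$ with $\tilde{z}_i=z_i+\frac{\kappa}{2}(\underline{M}_i-M_i)$, and the Jucys--Murphy elements $M_i,\underline{M}_i$ are defined with sums over $s=0,\dots,m-1$; so the group-algebra coefficients attached to the transpositions in $\mathcal{D}$ are the \emph{full} sums $\sum_{l=0}^{m-1}g_i^{-l}g_j^l$ (the ``$l=1$'' in that display is a slip, inconsistent with the definition of $\tilde z_i$ and with the rest of Section \ref{reptheory}). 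On the single-weight space $V_{\mu_{\underline{a}}}$ this element acts by the complete character sum $\sum_{l=0}^{m-1}\mu_{\underline{a}}(g_i)^{-l}\mu_{\underline{a}}(g_j)^{l}$, which is $m$ when $\mu_{\underline{a}}(g_i)=\mu_{\underline{a}}(g_j)$ and $0$ when they differ -- not the value $-1$ you obtained from $\sum_{l=1}^{m-1}\eta^{rl}$. Moreover the order of factors is harmless: $(i,j)\sum_{l=0}^{m-1}g_i^{-l}g_j^l=\bigl(\sum_{l=0}^{m-1}g_j^{-l}g_i^{l}\bigr)(i,j)$, and either way the cross-block terms \emph{annihilate} $V_{\mu_{\underline{a}}}$ rather than merely mapping it into other weight spaces. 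This is the paper's argument: $\mathcal{D}$ genuinely preserves $F^{-1}(V_{\underline{a}})\otimes\mathcal{S}$, and on it equals the pullback of $\mathcal{D}_{S_{a_0}\times\cdots\times S_{a_{m-1}}}$, with no compression needed.

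The distinction matters because your weaker, projected statement does not support what the lemma is used for. The subsequent identifications $\Ker\mathcal{D}=\bigoplus_{c}c\,\Ker\mathcal{D}_{S_{a_0}\times\cdots\times S_{a_{m-1}}}$ and $\im\mathcal{D}=\bigoplus_{c}c\,\im\mathcal{D}_{S_{a_0}\times\cdots\times S_{a_{m-1}}}$, and hence the block description of the Dirac cohomology of $V$, require $\mathcal{D}$ to preserve each summand $V^{c}_{\mu_{\underline{a}}}\otimes\mathcal{S}$ of the $\mathbb{C}[T]$-weight decomposition. If the cross-block terms really acted as nonzero multiples of transpositions (as they would under the $l\geq 1$ reading), the kernel and image of $\mathcal{D}$ on $V\otimes\mathcal{S}$ would mix the weight spaces and could not be computed from the compressed operator alone. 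So the missing idea is precisely the vanishing of the full character sum $\sum_{l=0}^{m-1}$ on cross-block pairs; once that is in place, your steps (1)--(2) are fine and steps (3)--(4), together with the caveats about projections, become unnecessary.
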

\begin{proof} Recall that since $V_{\mu_{\underline{a}}}$ only has one $\mathbb{C}[T]$ weight, namely $\underline{a}$ we can explicitly describe how $\sum_{l=0}^{m-1}g_i^{-l}g_j^l$ acts on this subspace. 
$$\sum_{l=0}^{m-1}g_i^{-l}g_j^l = \begin{cases} m  Id &  \text{ if }\mu_{\underline{a}}(g_i) = \mu_{\underline{a}}(g_i),\\ 0 & \text{ if }\mu_{\underline{a}}(g_i) \neq \mu_{\underline{a}}(g_i).\\\end{cases}$$
This parametrisation of pairs $\{i,j\}$ can be described in another way. If the transposition $s_{i,j}$ stabilises the character $\mu_{\underline{a}}$ then $\sum_{l=0}^{m-1}g_i^{-l}g_j^l = m$. However if $s_{i,j}$ is not in $\stab(\mu_{\underline{a}})$ then $\sum_{l=0}^{m-1}g_i^{-l}g_j^l = 0$ on the $\mu_{\underline{a}}$-weight space.
Ultimately this means that the Dirac operator $\mathcal{D}_{DO} \in \mathbb{H}_{DO} \otimes CL(V)$  preserves the subspace $F^{-1}(V)\otimes \mathcal{S}$ since the transpositions included in $\mathcal{D}_{DO}$ which do not preserve $V_{\mu_{\underline{a}}}$ are preceded by the element  $\sum_{l=0}^{m-1}g_i^{-l}g_j^l$ which acts by zero in this case. Finally since $\mathcal{D}_{DO}$ preserves $V_{\mu_{\underline{a}}}$ it equals an element inside $\stab(\underline{a})\otimes \mathcal{S}$. This is the pull back of $\mathcal{D}_{S_{a_0}\times...\times S_{a_{m-1}}}$ and hence $\mathcal{D}_{DO}$ agrees with $\mathcal{D}_{S_{a_0}\times...\times S_{a_{m-1}}}$ on the $\mu_{\underline{a}}$-weight space of $V$.

\end{proof}

We have described how the Dirac operator acts on the $\underline{a}$ weight space of $V$. Since $\mathcal{D}_{DO}$ is $G(m,1,n)$ invariant we can describe how it acts on the rest of the weight spaces. As discussed in Lemma \ref{functors} the other weight spaces are twists of this space by the coset representatives, $c \in C$ of the parabolic subgroup $S_P$ in  $S_n$. The group $S_P$ fixes the $\underline{a}$ weight space. Therefore if $\mathcal{D}_{DO}$ acts by $\mathcal{D}_{S_{a_0}\times...\times S_{a_{m-1}}}$ on $V_{\mu_{\underline{a}}}$  then $\mathcal{D}$ acts by $c\mathcal{D}_{S_{a_0}\times...\times S_{a_{m-1}}}c^{-1}$ on $cV_{\mu_{\underline{a}}}$. Hence $\Ker (\mathcal{D}_{DO}) \subset V \otimes \mathcal{S}$ is
$$\bigoplus_{c\in C} c \Ker\mathcal{D}_{S_{a_0}\times...\times S_{a_{m-1}}}.$$
Similarly since $\mathcal{D}_{DO}$ acts by $\mathcal{D}_{S_{a_0}\times...\times S_{a_{m-1}}}$ on $\stab(\underline{a})\otimes \mathcal{S}$ then 
$$\im \mathcal{D}_{DO} = \oplus_{c\in C} c \im \mathcal{D}_{S_{a_0}\times...\times S_{a_{m-1}}}.$$ 
We can describe the Dirac cohomology of an irreducible module $X$ in terms of the Dirac cohomology of its corresponding $\mathbb{H}_{S_{a_0}} \otimes...\otimes \mathbb{H}_{S_{a_{m-1}}}$ module.

\begin{theorem} Given an irreducible representation $V$ with $\mathbb{C}[T]$ weight space $V_{\mu_{\underline{a}}}$ then by transforming $V_{\mu_{\underline{a}}}$ to a $\mathbb{H}_{S_{a_0}} \otimes...\otimes \mathbb{H}_{S_{a_{m-1}}}$ module $X_{a_0} \otimes...\otimes X_{a_{m-1}}$ the Dirac cohomology of $V$ is 
$$\bigoplus_{c\in S_P/S_n} c \left (  H_D(X_{a_0}) \otimes ...\otimes H_D(X_{a_{m-1}})\right ),$$
where $H_D(X)$ is the Dirac cohomology of the $\mathbb{H}_{S_k}$-module $X$. \end{theorem}

\begin{definition} Define $\widetilde{F}$ and $\tilde{F}^{-1}$ to be the functors exhibiting the Morita equivalence between $\widetilde{G(m,1,n)}$ and $\bigoplus_{\underline{a}\in A}  \bigoplus_{\underline{a}\in A} \mathbb{C}\widetilde{S_{a_0}} \otimes ...\otimes \mathbb{C}\widetilde{S_{a_{m-1}}}$, similarly to Lemma \ref{functors}.\end{definition}

\begin{corollary} Let $H_D(\bullet)$ denote the functor taking the relevant module to a its Dirac cohomology. 
We have the following commutative diagram:

\begin{centering}

\begin{tikzcd}
\mathbb{H}(G(m,1,n))\text{-mod}\arrow[r, "F"] \arrow[d, "H_D(\bullet)"] & \bigoplus_{\underline{a}\in A}\mathbb{H}_{S_{a_0}} \otimes...\otimes \mathbb{H}_{S_{a_{m-1}}} \text{-mod}\arrow[d, "H_D(\bullet)" ] \\\mathbb{C}\widetilde{G(m,1,n)}\text{-mod} 
& \bigoplus_{\underline{a}\in A} \mathbb{C}\widetilde{S_{a_0}} \otimes ...\otimes \mathbb{C}\widetilde{S_{a_{m-1}}}\text{-mod}\arrow[l, "\widetilde{F^{-1}}"]
\end{tikzcd}
\end{centering}

\end{corollary}

\end{section}

\bibliography{bib}{}
\bibliographystyle{abbrv}
\end{document}